\numberwithin{equation}{section}
\newtheorem{thm}{Theorem}[section]
\newtheorem{cor}[thm]{Corollary}
\newtheorem{lem}[thm]{Lemma}
\newtheorem{prop}[thm]{Proposition}
\theoremstyle{definition}
\newtheorem{defn}[thm]{Definition}
\newtheorem{rem}[thm]{Remark}
\newtheorem{example}[thm]{Example}
\newtheorem{quest}[thm]{Question}
\renewcommand{\epsilon}{\varepsilon}
\newcommand{\N}{\mathbb{N}}
\newcommand{\Z}{\mathbb{Z}}
\newcommand{\R}{\mathbb{R}}
\newcommand{\C}{\mathbb{C}}
\renewcommand{\le}{\ensuremath{\leqslant}}
\renewcommand{\ge}{\ensuremath{\geqslant}}
\newcommand{\mat}[1]{\begin{bmatrix} #1 \end{bmatrix}}
\newcommand{\ands}{\quad\mbox{and}\quad}
\newcommand{\SC}{{SC}}
\newcommand{\opSC}{\operatorname{SC}}
\newcommand{\scr}{\operatorname{sc}}
\newcommand{\opEAE}{\operatorname{EAE}}
\newcommand{\eae}{\operatorname{eae}}
\newcommand{\ran}{\operatorname{ran}}
\newcommand{\lcm}{\operatorname{lcm}}
\newcommand{\cJ}{{\mathcal J}}
\newcommand{\cW}{{\mathcal W}}\newcommand{\cX}{{\mathcal X}}
\newcommand{\cY}{{\mathcal Y}}\newcommand{\cZ}{{\mathcal Z}}
\newcommand{\GM}{\cX_{\normalfont{\text{GM}}}}
\newcommand{\Go}{\cX_{\normalfont{\text{G}}}}
\newcommand{\sB}{{\mathscr B}}
\newcommand{\sE}{{\mathscr E}}
\newcommand{\sG}{{\mathscr G}}
\newcommand{\sK}{{\mathscr K}}
\newcommand{\sS}{{\mathscr S}}
\newcommand{\sW}{{\mathscr W}}
\begin{document}

\title[EAE and SC for Fredholm operators]{Equivalence after extension and Schur coupling for Fredholm operators on Banach spaces}

\author[S.\ ter Horst]{Sanne ter Horst}
\address{S.\ ter Horst, Department of Mathematics, Research Focus Area:\ Pure and Applied Analytics, North-West University, Potchefstroom, 2531 South Africa and DSI-NRF Centre of Excellence in Mathematical and Statistical Sciences (CoE-MaSS)}
\email{Sanne.TerHorst@nwu.ac.za}

\author[N.J.\ Laustsen]{Niels Jakob Laustsen}
\address{N.J.\ Laustsen, Department of Mathematics and Statistics, Fylde College, Lancaster University, Lancaster, LA1 4YF, United Kingdom}
\email{n.laustsen@lancaster.ac.uk}

\thanks{This work is based on the research supported in part by the National Research Foundation of South Africa (Grant Numbers 90670 and 118513).
}

\begin{abstract}
  Schur coupling (SC) and equivalence after extension (EAE) are important relations for  bounded  operators on Banach spaces. It has been known for 30 years that the former implies the latter, but only recently  Ter Horst, Messer\-schmidt, Ran and Roe\-lands  disproved the converse by   constructing a pair of Fredholm operators  which are EAE, but not SC.

  Motivated by this result, we investigate when EAE and  SC coincide for Fred\-holm operators. Fred\-holm operators which are EAE have the same Fredholm index.
  Surprisingly, we find that for each integer~$k$ and every pair of Banach spaces~$(\cX,\cY)$, either \emph{no} pair of Fredholm operators of index~$k$ acting on~$\cX$ and~$\cY$, respectively, is SC, or \emph{every} pair of this kind which is EAE is also SC.
  Consequently, the question whether EAE and SC coincide for Fredholm operators of index~$k$ depends only on the geometry of the underlying Banach spaces~$\cX$ and~$\cY$, not on the properties of the operators themselves.

We quantify this finding by introducing two numerical indices which capture the coincidence of EAE and SC,  and provide a number of examples illustrating the possible values of these indices. Notably, this includes an example showing that the above-mentioned result of Ter Horst \textit{et al}, which is based on a pair of essentially incomparable Banach spaces, does not extend to projectively incomparable Banach spaces.
\end{abstract}

\subjclass[2010]{Primary 47A62, 47B01; Secondary  46B03, 46B20,  47A53}
\keywords{Equivalence after extension, Schur coupling, Fredholm operators, incomparable Banach spaces, Gowers--Maurey spaces}


\maketitle

\section{Introduction}\label{S:Intro}

\noindent
Equivalence after extension (EAE), matricial coupling (MC) and Schur coupling (SC) are three relations for bounded  operators on Banach spaces that originate in the study of Wiener--Hopf integral operators \cite{BGK84} and have found numerous applications since. A~key feature in many of these applications  is that the three relations coincide. This observation led Bart and Tsekanovski\u{\i} \cite{BT94} to ask whether this is always true.  They already knew that EAE and MC are equivalent \cite{BGK84,BT92a} and that SC implies EAE \cite{BT92b,BT94}, so their precise question was whether EAE  implies SC.

Despite numerous results confirming this implication in special cases \cite{BT92b,BT94,BGKR05,tHR13,T14,tHMR15,tHMRRW18},  recently Ter Horst, Messer\-schmidt, Ran and Roe\-lands~\cite{tHMRR19} showed that  EAE does not in general imply SC. Their counterexample uses bounded operators defined on a pair of Banach spaces which is essentially incomparable, in which case EAE (and hence SC) can occur only for Fredholm operators, while SC additionally requires that the  operators have index zero. By contrast, it is known that EAE and SC coincide for Fredholm operators acting on isomorphic Banach spaces \cite[Proposition~6.1(iv)]{tHMR20}.

These results motivated the present work, in which we study EAE and SC for Fredholm operators without imposing any restrictions on the underlying Banach spaces. Further justification for focussing on Fredholm operators comes from the  prominent role this class  plays in many applications of the theory, as the following studies from the last decade evidence:
diffraction problems \cite{CK14,S19}; Wiener--Hopf factorization \cite{S15b, GKR17} and
invertibility of Wiener--Hopf plus Hankel operators \cite{CS13};
truncated Toeplitz operators \cite{CP16,O'L22};
Riemann--Hilbert problems \cite{CM14};
Helmholtz and Sylvester equations \cite{S15a,S17a} and \cite{D21}, respectively;
completeness theorems for integral and differential operators \cite{KLV22};
and problems concerning electrical networks~\cite{BSVWpre}.\smallskip

Before we state our main results, let us introduce some notation and terminology, most of which is standard.
We follow the convention that $\N=\{1,2,3,\ldots\}$ and $\N_0=\{0,1,2,\ldots\}$.
Let $\cX$ and $\cY$ be  Banach spaces, either real or complex, with $\mathbb{K}\in\{\R,\C\}$ denoting  the scalar field. The term ``operator'' means a bounded linear map between Banach spaces, and $\sB(\cX,\cY)$ denotes the Banach space of all operators from $\cX$ to $\cY$. As usual, $\sB(\cX,\cX)$ is abbreviated $\sB(\cX)$. This convention applies whenever we consider sets of operators: Once a subset $\Sigma(\cX,\cY)$ of $\mathscr{B}(\cX,\cY)$ has been defined, we write $\Sigma(\cX)$ instead of $\Sigma(\cX,\cX)$.

The identity operator on a Banach space $\cX$ is denoted by $I_{\cX}$, while  the kernel and the range of an operator $T$ are denoted by $\ker T$ and $\ran T$, respectively. Two Banach spaces $\cX$ and $\cY$ are \emph{isomorphic,} written $\cX\cong\cY$, if~$\sB(\cX,\cY)$ contains a bijection, called an \emph{isomorphism}. The Banach Isomorphism Theorem ensures that the inverse of an isomorphism is automatically bounded.

\begin{defn} Let  $U\in\sB(\cX)$ and $V\in\sB(\cY)$. We say that:
 \begin{enumerate}[label={\normalfont{(\roman*)}}]
  \item $U$ and $V$ are \emph{equivalent after extension}, abbreviated EAE, if there exist Banach spaces $\cX_0$ and $\cY_0$ and isomorphisms $E\in\sB(\cY\oplus \cY_0,\cX\oplus\cX_0)$ and $F\in\sB(\cX\oplus \cX_0,\cY\oplus \cY_0)$ such that
\begin{equation}\label{EAE}
\begin{bmatrix} U&0\\0&I_{ \cX_0} \end{bmatrix} =E\begin{bmatrix} V&0\\0& I_{ \cY_0}\end{bmatrix}F.
\end{equation}
\item $U$ and $V$ are \emph{Schur coupled}, abbreviated SC,
if there exist isomorphisms $A\in\sB(\cX)$ and $D\in\sB(\cY)$ and operators $B\in\sB(\cY,\cX)$ and $C\in\sB(\cX,\cY)$  such that
\begin{equation}\label{SC}
U=A-BD^{-1}C \ands V=D-CA^{-1}B.
\end{equation}
\end{enumerate}
\end{defn}

As noted above, whenever~$U$ and~$V$ are SC, they are also EAE. Motivated by many applications in which the converse implication holds,  Bart and Tsekanovski\u{\i} asked the following question in \cite{BT94}:
\begin{quest}\label{Q:BT}
  Under which conditions on the Banach spaces~$\cX$ and~$\cY$ and/or on the  operators~$U$ and~$V$  is it true that whenever the operators $U\in\sB(\cX)$ and $V\in\sB(\cY)$ are EAE, they are also SC?
\end{quest}

We shall address this question in the case where $U$ and $V$ are Fredholm operators. Before doing so,   let us recall some basic facts about this class of operators. An operator $T\in\sB(\cX,\cY)$  is called a \emph{Fredholm operator} if the quantities
\[
\alpha(T) = \dim\ker T\qquad\text{and}\qquad \beta(T) = \dim\cY/\ran T
\]
are both finite. The latter condition implies that $\ran T$ is closed. As usual, we write~$\Phi(\cX,\cY)$ for the subset of~$\sB(\cX,\cY)$ consisting of Fredholm operators. The \emph{index} of a Fredholm operator~$T$ is defined by
\[ i(T) = \alpha(T)-\beta(T)\in\Z,
\]
and for  $k\in\Z$, $\Phi_k(\cX,\cY)$ denotes the set of $T\in\Phi(\cX,\cY)$ such that $i(T)=k$.

In  the 1990s, Bart and Tsekanovski\u{\i} gave the following  characterization of equivalence after extension for Fredholm operators; see \cite[Theorem~4]{BT92b}, and also \cite[Theorem~6, page 211]{BGKR05}.

\begin{thm}\label{T:EAEfredholm}
Let $U\in\sB(\cX)$ and $V\in\sB(\cY)$ for some Banach spaces $\cX$ and $\cY$.
\begin{enumerate}[label={\normalfont{(\roman*)}}]
\item\label{T:EAEfredholm1} Suppose that $U$ and $V$ are EAE. Then $U$ is a Fredholm operator if and only if $V$  is a Fredholm operator.
\item\label{T:EAEfredholm2} Suppose that $U$ and $V$ are Fredholm operators.
Then $U$ and $V$ are EAE  if and only if
\[
\alpha(U)=\alpha(V)\qquad \text{and}\qquad \beta(U) = \beta(V).
\]
In particular, Fredholm operators which are EAE have the same index.
\end{enumerate}
\end{thm}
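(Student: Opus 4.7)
For part (i) and the forward implication in part (ii), the key observation is straightforward: the isomorphism $F$ in \eqref{EAE} restricts to a linear bijection from $\ker(U\oplus I_{\cX_0})$ onto $\ker(V\oplus I_{\cY_0})$, while $E$ descends to a linear bijection between the corresponding cokernels. Since $\ker(W\oplus I_{\cZ})$ and $(\cY\oplus\cZ)/\ran(W\oplus I_{\cZ})$ are canonically isomorphic to $\ker W$ and $\cY/\ran W$ for any $W\in\sB(\cY)$ and any Banach space $\cZ$, one deduces $\dim\ker U=\dim\ker V$ and $\dim\cX/\ran U=\dim\cY/\ran V$, with each dimension finite if and only if its counterpart is. This simultaneously yields (i) and the ``only if'' direction of (ii).

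For the converse in (ii), set $n:=\alpha(U)=\alpha(V)$ and $m:=\beta(U)=\beta(V)$. Because $\ker U$ is finite-dimensional and $\ran U$ is closed of finite codimension, both are complemented in $\cX$, giving closed-subspace decompositions $\cX=\ker U\oplus\cX_1=\ran U\oplus\cX_2$ with $\dim\cX_2=m$; the restriction $U_0\colon\cX_1\to\ran U$ of $U$ is a bounded linear bijection and hence an isomorphism by the Banach isomorphism theorem. Choose analogous $\cY=\ker V\oplus\cY_1=\ran V\oplus\cY_2$ and $V_0\colon\cY_1\to\ran V$ for $V$.

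The plan is to take $\cX_0:=\cY_1\oplus\cY_2$ and $\cY_0:=\cX_1\oplus\cX_2$, so that both $\cX\oplus\cX_0$ and $\cY\oplus\cY_0$ split as four-term direct sums sharing the summands $\cX_1$ and $\cY_1$ together with two finite-dimensional pieces of dimensions $n$ and $m$. Fixing any linear isomorphisms $\phi\colon\ker U\to\ker V$ and $\psi\colon\cY_2\to\cX_2$ (which exist because these finite-dimensional spaces have matching dimensions), I would then construct $F\colon\cX\oplus\cX_0\to\cY\oplus\cY_0$ and $E\colon\cY\oplus\cY_0\to\cX\oplus\cX_0$ as block-permutation isomorphisms whose non-zero entries are $\phi,\psi,U_0,V_0$ and their inverses, arranged so that the composition $E(V\oplus I_{\cY_0})F$ matches $U\oplus I_{\cX_0}$ summand by summand. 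Boundedness and invertibility of $E$ and $F$ are then automatic, as every non-zero block is either finite-dimensional or one of the isomorphisms $U_0,V_0$.

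The main obstacle is bookkeeping in the final step: consistently choosing which of the two natural decompositions of $\cX\oplus\cX_0$ --- the ``input'' decomposition via $\ker U\oplus\cX_1$ versus the ``output'' decomposition via $\ran U\oplus\cX_2$ --- to use on each side of~\eqref{EAE}, so that the matrix entries of $E$ and $F$ line up with $U_0$ and $V_0$ in the correct positions. Once this is settled, the verification of~\eqref{EAE} reduces to a short block-matrix calculation.
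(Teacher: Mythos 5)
The paper does not prove this theorem; it is stated as a citation of Bart and Tsekanovski\u{\i} (their references \cite{BT92b} and \cite{BGKR05}), so there is no in-paper argument for me to compare against. Assessed on its own merits, your proposal is correct, and the ``bookkeeping'' you flag at the end does close up.

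Your argument for part~(i) and the forward direction of~(ii) is fine: the relation~\eqref{EAE} forces $F$ to restrict to a linear bijection from $\ker(U\oplus I_{\cX_0})$ onto $\ker(V\oplus I_{\cY_0})$ and $E$ to descend to a bijection of cokernels, and the identifications $\ker(W\oplus I_\cZ)\cong\ker W$ and $(\cY\oplus\cZ)/\ran(W\oplus I_\cZ)\cong\cY/\ran W$ then give $\alpha(U)=\alpha(V)$ and $\beta(U)=\beta(V)$ as cardinal equalities, whence also~(i). For the converse in~(ii), with $\cX_0=\cY_1\oplus\cY_2$ and $\cY_0=\cX_1\oplus\cX_2$, and using the domain decomposition $\cX\oplus\cX_0=\ker U\oplus\cX_1\oplus\cY_1\oplus\cY_2$ together with the codomain decomposition $\cX\oplus\cX_0=\ran U\oplus\cX_2\oplus\cY_1\oplus\cY_2$, and likewise $\cY\oplus\cY_0=\ker V\oplus\cY_1\oplus\cX_1\oplus\cX_2=\ran V\oplus\cY_2\oplus\cX_1\oplus\cX_2$, one may take
\[
F(x_0,x_1,y_1,y_2)=(\phi x_0,\,y_1,\,x_1,\,\psi y_2),\qquad
E(v,y_2',x_1',x_2)=(U_0 x_1',\,\psi y_2',\,V_0^{-1}v,\,\psi^{-1}x_2).
\]
Both are block permutations with invertible entries (finite-dimensional isomorphisms or $U_0$, $V_0^{-1}$), hence isomorphisms, and
\[
E(V\oplus I_{\cY_0})F(x_0,x_1,y_1,y_2)=E(V_0 y_1,\,0,\,x_1,\,\psi y_2)=(U_0 x_1,\,0,\,y_1,\,y_2)=(U\oplus I_{\cX_0})(x_0,x_1,y_1,y_2),
\]
which verifies~\eqref{EAE}. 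So your outline is sound and complete once this explicit check is written down.
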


As a consequence, the following sets, defined  for every $k\in\Z$ and every pair of Banach spaces $(\cX,\cY)$, provide the natural setting in which to study \Cref{Q:BT} for Fredholm operators:
\begin{equation}\label{EAEkSCk}
\begin{aligned}
  \opEAE_k(\cX,\cY) &= \{ (U,V)\in\Phi_k(\cX)\times\Phi_k(\cY) : U\ \text{and}\ V\ \text{are EAE}\},\\
  \opSC_k(\cX,\cY) &= \{ (U,V)\in\Phi_k(\cX)\times\Phi_k(\cY) : U\ \text{and}\ V\ \text{are SC}\}.
\end{aligned}
\end{equation}
In view of \Cref{T:EAEfredholm}\ref{T:EAEfredholm2},  the former set can alternatively be written as
 \begin{equation}\label{eq:EAEalt} \opEAE_k(\cX,\cY)=\{(U,V)\in\Phi_k(\cX)\times\Phi_k(\cY) : \alpha(U)=\alpha(V)
   \}. \end{equation}
 These sets are useful in our investigation because they allow us to express the statement that EAE and SC are equivalent for every pair of Fredholm operators of index~$k$ on~$\cX$ and~$\cY$, respectively, in the concise form
 $\opSC_k(\cX,\cY) = \opEAE_k(\cX,\cY)$, where we note that  the inclusion $\opSC_k(\cX,\cY)\subseteq \opEAE_k(\cX,\cY)$ is always true because SC implies EAE.

 Using this notation, we can state easily two important results that motivated our work. First,
 the answer to \Cref{Q:BT} is always affirmative for Fredholm operators of index~$0$ (see \cite[Theorem~3]{BT92b} and \cite[Theorem~5]{BGKR05}).   In the above notation, this simply means that
\begin{equation}\label{BGKR:eq}
   \opSC_0(\cX,\cY)=\opEAE_0(\cX,\cY)
\end{equation}
for every pair of Banach spaces~$(\cX,\cY)$.

Second, we can state the seminal result of Ter Horst, Messer\-schmidt, Ran and Roe\-lands~\cite{tHMRR19} showing
that there are pairs of Fredholm operators which are EAE, but not SC.  This requires the following piece of terminology.

\begin{defn}\label{D:EssIncomp} A pair of  Banach spaces $(\cX,\cY)$ is \emph{essentially incomparable} if \mbox{$I_\cX-ST\in\Phi(\cX)$} for every $S\in\sB(\cY,\cX)$ and $T\in\sB(\cX,\cY)$.
\end{defn}

\begin{thm}\label{T:tHMRR}
  \begin{enumerate}[label={\normalfont{(\roman*)}}]
\item\label{T:tHMRR1} Let $(\cX,\cY)$ be a pair of essentially incomparable Banach spaces.
  Then $U\in\sB(\cX)$ and $V\in\sB(\cY)$ are SC if and only if $(U,V)\in\opEAE_0(\cX,\cY)$.
\item\label{T:tHMRR2} There exists a pair of essentially incomparable Banach spaces $(\cX,\cY)$ such that
  \[ \opEAE_k(\cX,\cY)\ne\emptyset\quad \text{for every}\quad k\in\Z. \]
    Hence  EAE and SC are not equivalent for Fredholm operators of non-zero index on such Banach spaces.\\
  An example is given by $\cX=\ell_p$ and $\cY=\ell_q$ for $1\le p<q<\infty$.
\end{enumerate}
\end{thm}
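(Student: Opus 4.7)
The plan is to prove the two parts in sequence, with part~\ref{T:tHMRR1} doing most of the conceptual work. For its forward implication, the Schur-coupling identity rewrites as $U=A(I_\cX-ST)$, where $S=A^{-1}B\in\sB(\cY,\cX)$ and $T=D^{-1}C\in\sB(\cX,\cY)$, exhibiting $U$ as an isomorphism composed with $I_\cX-ST$. Essential incomparability makes every operator in the homotopy $\{I_\cX-tST:t\in[0,1]\}$ Fredholm, so continuity of the index gives $i(I_\cX-ST)=i(I_\cX)=0$; hence $i(U)=0$, and the analogous argument applied to $V=D(I_\cY-TS)$ yields $i(V)=0$. Since SC always implies EAE, \Cref{T:EAEfredholm}\ref{T:EAEfredholm2} then forces $\alpha(U)=\alpha(V)$, placing $(U,V)$ in $\opEAE_0(\cX,\cY)$. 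The converse is immediate from the result~\eqref{BGKR:eq} of Bart, Gohberg, Kaashoek and Ran quoted above: every EAE pair of Fredholm operators of index zero is automatically SC, with no restriction on the underlying spaces.

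For part~\ref{T:tHMRR2} with $\cX=\ell_p$ and $\cY=\ell_q$ and $1\le p<q<\infty$, two items need checking. Essential incomparability follows from Pitt's theorem: every bounded operator $\ell_q\to\ell_p$ is compact, so any composition $ST:\ell_p\to\ell_p$ with $S\in\sB(\ell_q,\ell_p)$ and $T\in\sB(\ell_p,\ell_q)$ is compact, making $I_{\ell_p}-ST$ Fredholm. To populate $\opEAE_k(\ell_p,\ell_q)$ for each $k\in\Z$, I would use powers of the unilateral shifts. Writing $S_+$ for the forward shift (with $\alpha=0$, $\beta=1$) and $S_-$ for the backward shift (with $\alpha=1$, $\beta=0$) on the relevant space, the pair consisting of $S_-^k$ on $\ell_p$ and $S_-^k$ on $\ell_q$ lies in $\opEAE_k$ for $k\ge 0$ because both factors have nullity~$k$, while the analogous forward-shift pair $(S_+^{-k},S_+^{-k})$ handles $k<0$; the characterisation~\eqref{eq:EAEalt} of $\opEAE_k$ via matching nullities applies in either case. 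By part~\ref{T:tHMRR1}, any SC pair on these spaces must have index zero, so for $k\neq 0$ the explicit pairs just exhibited realise the desired separation of EAE from SC.

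The delicate point, which I would flag as the main obstacle, is promoting ``Fredholm'' to ``index zero'' in part~\ref{T:tHMRR1}: the definition of essential incomparability only outputs Fredholmness of $I_\cX-ST$, and it is the continuity of the Fredholm index along the homotopy $\{I_\cX-tST\}$ that pins every Schur-coupled pair into $\opEAE_0(\cX,\cY)$. Without this collapse, the shift constructions in part~\ref{T:tHMRR2} would not yield genuine counterexamples, since it is precisely the gap between $\opEAE_k$ and $\opEAE_0$ for $k\ne 0$ that witnesses the failure of ``EAE $\Longrightarrow$ SC''.
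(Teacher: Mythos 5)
Your proof is correct, and for part~(i) it takes a genuinely different route from the paper. The paper treats \Cref{T:tHMRR} as a cited result from \cite{tHMRR19}, but it does re-prove the identical statement~\Cref{T:essinNew}\ref{T:essinNew1}: there the argument passes through the paper's main technical theorem~\Cref{T:SCfredholmChar} and uses \Cref{R:ss_iness}\ref{inessperturb} (stability of Fredholm index under inessential perturbations) to conclude that $i(I_\cX-ST)=0$, since essential incomparability makes $ST$ inessential and $I_\cX\in\Phi_0(\cX)$. Your argument instead works directly from the Schur-coupling equations, rewriting $U=A(I_\cX-ST)$ and appealing to local constancy of the Fredholm index along the path $\{I_\cX-tST\}_{t\in[0,1]}$, all of which lies in $\Phi(\cX)$ by the definition of essential incomparability. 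Both approaches hinge on the same insight --- that essential incomparability collapses $i(I_\cX-ST)$ to zero --- but yours is self-contained and elementary (it needs no characterization of $\opSC_k$ and no machinery of inessential operators, only the openness of $\Phi(\cX)$ and continuity of the index), while the paper's is a one-line corollary of its heavier apparatus. One small remark: when you say ``the analogous argument applied to $V=D(I_\cY-TS)$,'' you are implicitly using the symmetry of essential incomparability (i.e.\ that $I_\cY-tTS\in\Phi(\cY)$ too); this is established by \Cref{L:IndexEqual}, and it would be slightly cleaner to invoke that lemma directly to transfer $i(I_\cX-ST)=0$ to $i(I_\cY-TS)=0$. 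For part~(ii) the paper supplies no proof, deferring to \cite{tHMRR19}; your verification via Pitt's theorem and powers of the unilateral shifts is correct and fills that gap, with~\eqref{eq:EAEalt} applied exactly as intended.
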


 The significance of analyzing whether SC and EAE are equivalent for each value~$k$ of the Fredholm index separately will become clear from the next result, which is the first main outcome of our work. To state it concisely, we introduce a numerical index $\eae(\cX,\cY)$ as follows:
 Set $\mathbb{I}_\Phi(\cX) = \{k\in\Z : \Phi_k(\cX)\neq\emptyset\}$ and then define
\begin{equation}\label{eq:eaeDefn} \eae(\cX,\cY) = \begin{cases} 0\ &\text{if}\ \mathbb{I}_\Phi(\cX)\cap\mathbb{I}_\Phi(\cY)\cap\N = \emptyset,\\
    \min\mathbb{I}_\Phi(\cX)\cap\mathbb{I}_\Phi(\cY)\cap\N\ &\text{otherwise.} \end{cases} \end{equation}

\begin{thm}\label{T:SCdichotomy}
Let $k\in\Z$, and let~$\cX$ and~$\cY$ be Banach spaces.
\begin{enumerate}[label={\normalfont{(\roman*)}}]
\item\label{T:SCdichotomy0} $\opEAE_k(\cX,\cY)\ne\emptyset$ if and only if $k$ is a multiple of~$\eae(\cX,\cY)$.
\item\label{T:SCdichotomy2}  Suppose that~$k$ is a multiple of~$\eae(\cX,\cY)$. Then $\opSC_k(\cX,\cY) = {\opEAE}_k(\cX,\cY)$ if and only if $\opSC_k(\cX,\cY)\ne\emptyset$.
\item\label{T:SCdichotomy1} Suppose that~$k$ is not a multiple of~$\eae(\cX,\cY)$. Then $\Phi_k(\cX)=\emptyset$ or \mbox{$\Phi_k(\cY)=\emptyset$}, and consequently $\opSC_k(\cX,\cY)=\opEAE_k(\cX,\cY)=\emptyset$.
\end{enumerate}
\end{thm}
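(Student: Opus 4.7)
My plan is to prove the three parts in sequence, with part~(ii) being the substantive obstacle. The foundation is the identity $\eae(\cX,\cY)\Z = \mathbb{I}_\Phi(\cX)\cap\mathbb{I}_\Phi(\cY)$, which I would establish by first observing that $\mathbb{I}_\Phi(\cX)$ is an additive subgroup of~$\Z$: closure under addition is the multiplicativity of the Fredholm index under composition, while closure under negation follows from Atkinson's theorem, since every $T\in\Phi_k(\cX)$ admits a parametrix $T^{\sharp}\in\Phi_{-k}(\cX)$ satisfying $TT^{\sharp}-I_\cX, T^{\sharp}T-I_\cX\in\sK(\cX)$. Hence $\mathbb{I}_\Phi(\cX)\cap\mathbb{I}_\Phi(\cY)$ is a subgroup of~$\Z$, equal to $\eae(\cX,\cY)\Z$ by the definition of $\eae(\cX,\cY)$. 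With this, part~(i) reduces in the backward direction to producing $U\in\Phi_k(\cX), V\in\Phi_k(\cY)$ with matching kernel dimensions, since \Cref{T:EAEfredholm}\ref{T:EAEfredholm2} then delivers the EAE. Starting from arbitrary elements of $\Phi_k(\cX), \Phi_k(\cY)$, the matching can be arranged by composing each with a rank-reducing idempotent whose kernel meets the existing kernel only trivially, which inflates $\alpha$ and $\beta$ by the same prescribed amount while preserving the index. Part~(iii) is then immediate: $k\notin\eae(\cX,\cY)\Z$ forces $\Phi_k(\cX)=\emptyset$ or $\Phi_k(\cY)=\emptyset$, so both $\opEAE_k(\cX,\cY)$ and $\opSC_k(\cX,\cY)$ are empty.

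For part~(ii), the forward implication follows from part~(i); the substantive content is the converse. Fix $(U_0,V_0)\in\opSC_k(\cX,\cY)$ realised by $U_0 = A_0 - B_0 D_0^{-1}C_0$ and $V_0 = D_0 - C_0 A_0^{-1}B_0$, and let $(U,V)\in\opEAE_k(\cX,\cY)$. My plan combines two ingredients. First, a nullity-inflation lemma for SC pairs: perturbing the off-diagonal blocks by finite-rank operators $M\in\sB(\cY,\cX), L\in\sB(\cX,\cY)$ replaces $(U_0,V_0)$ by the pair of Schur complements of $\sbm{A_0 & B_0 + M \\ C_0 + L & D_0}$, which is still SC via the same invertible $A_0,D_0$; choosing $M, L$ appropriately should raise both nullities by any prescribed $n\in\N_0$, producing a pair $(\widetilde U_0,\widetilde V_0)\in\opSC_k(\cX,\cY)$ with $\alpha(\widetilde U_0)=\alpha(U)$ and $\alpha(\widetilde V_0)=\alpha(V)$. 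Second, the isomorphic-spaces result \cite[Proposition~6.1(iv)]{tHMR20}: EAE and SC coincide for Fredholm operators on a single Banach space. Since $U$ and $\widetilde U_0$ in $\Phi_k(\cX)$ have matching $\alpha, \beta$, they are EAE on~$\cX$ and hence SC on $(\cX,\cX)$; similarly $V$ and $\widetilde V_0$ are SC on $(\cY,\cY)$. Arranging the three SC data as a $3\times 3$ block-matrix product on $\cX\oplus\cX\oplus\cY$ and Schur-complementing out the intermediate $\cX$-factor should then yield a block operator on $\cX\oplus\cY$ realising $(U,V)$ as Schur complements.

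The main obstacle I anticipate is the final composition step. A direct Schur-complement elimination of the intermediate space yields a lower-triangular block operator of the form $\sbm{U & 0 \\ \star & V}$ on $\cX\oplus\cY$, whose diagonal entries $U, V$ are Fredholm of index~$k$ and therefore generally not invertible, so that this is not yet of SC form. Turning this candidate into a genuine SC decomposition requires a further correction --- adding an invertible perturbation to the corner blocks and updating the off-diagonal entries so that the Schur complements are preserved while the new corner blocks become isomorphisms of~$\cX$ and~$\cY$. Carrying out this correction, together with verifying that the finite-rank perturbations $(M,L)$ in the nullity-inflation lemma can be chosen to increase $\alpha(U_0)$ and $\alpha(V_0)$ by exactly the specified amount, is where the technical heart of the proof resides.
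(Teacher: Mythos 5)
Parts~\ref{T:SCdichotomy0} and~\ref{T:SCdichotomy1} of your proposal are fine and in substance the same as the paper's: the paper also reduces~\ref{T:SCdichotomy0} to the fact that $\mathbb{I}_\Phi(\cX)\cap\mathbb{I}_\Phi(\cY)$ is an ideal of~$\Z$ and to a kernel-inflation lemma (\Cref{perturbkernel1}); your parametrix argument for closure under negation is a minor variant of the paper's use of right and left inverses, and your idempotent-composition step matches \Cref{perturbkernel1}.

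Part~\ref{T:SCdichotomy2}, however, has a genuine gap precisely where you flag it. Your plan reduces to a chain $(U,\widetilde U_0)$, $(\widetilde U_0,\widetilde V_0)$, $(\widetilde V_0,V)$, each pair SC, and then hopes to ``compose'' the witnesses. This is an instance of transitivity of Schur coupling, which is not a known fact and is essentially equivalent (for Fredholm operators) to the very statement you are trying to prove. As you observe, eliminating the intermediate copy of~$\cX$ from a $3\times 3$ block operator yields $\sbm{U & 0\\ \star & V}$, which is not an SC witness: in an SC decomposition $\sbm{A & B\\ C & D}$ the diagonal entries must be \emph{isomorphisms} of~$\cX$ and~$\cY$, with $U,V$ the resulting Schur complements --- not the diagonal entries themselves. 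There is no apparent finite-rank or invertible ``correction'' of the corner blocks that simultaneously makes them isomorphisms and fixes both Schur complements; if there were, it would prove SC transitivity along chains, which would be a substantial new result. The paper avoids this entirely by a different mechanism: it first reformulates SC via \Cref{char:SC} as the existence of isomorphisms $M,N$ and operators $S,T$ with $UM=I_\cX-ST$ and $VN=I_\cY-TS$, and then the key \Cref{T:SCfredholmChar} reduces the whole of~\ref{T:SCdichotomy2} to the single scalar invariant ``does there exist $S,T$ with $I_\cX-ST\in\Phi_k(\cX)$?'' The proof adjusts a fixed witness $(S_1,T_1)$ by a finite-rank perturbation (\Cref{perturbkernel2}) to match $\alpha(U)$, conjugates to align ranges, and then builds $M,N$ directly; no composition of SC data across spaces is ever needed.

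There is also a secondary issue with your ``nullity-inflation lemma for SC pairs''. Perturbing only the off-diagonal blocks $B_0,C_0$ by finite-rank $M,L$ changes $U_0$ by $-B_0D_0^{-1}L - MD_0^{-1}C_0 - MD_0^{-1}L$, which is constrained to factor through~$\cY$ (and through the ranges of $B_0D_0^{-1}$ and $C_0$). It is not clear, and in degenerate cases false (e.g.\ $B_0=C_0=0$), that an arbitrary prescribed nullity $m\ge k$ can be reached this way, and your phrasing (``raise both nullities'') does not cover the case $\alpha(U_0)>\alpha(U)$. The paper's \Cref{perturbkernel2} achieves the required flexibility, but its proof is not formal: it relies on $\cY\cong\cY\oplus\cW$ for suitable finite-dimensional~$\cW$, which is nontrivial and drawn from \cite[Proposition~4.2]{tHMR20}. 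So even this preparatory step needs a real argument, not just the observation that finite-rank perturbations of $B_0,C_0$ preserve SC.
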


The most remarkable part of \Cref{T:SCdichotomy} is the implication $\Leftarrow$ in~\ref{T:SCdichotomy2} which, when written out, states that as soon as \emph{one} pair of operators \mbox{$(U,V)\in\Phi_k(\cX)\times\Phi_k(\cY)$} is SC, then EAE and SC are equivalent for \emph{all} pairs $(U,V)\in\Phi_k(\cX)\times\Phi_k(\cY)$. In other words,  equivalence of EAE and SC for Fredholm operators depends only on the geometry of the underlying Banach spaces~$\cX$ and~$\cY$ and on the Fredholm index~$k$, not on the Fredholm operators themselves.

In view of \Cref{T:SCdichotomy}\ref{T:SCdichotomy2}, it would be of great interest to establish a counterpart  of \Cref{T:SCdichotomy}\ref{T:SCdichotomy0}  for SC.
In analogy with~\eqref{eq:eaeDefn}, we introduce the set
\begin{equation}\label{eq:Isc} \mathbb{I}_{\text{SC}}(\cX,\cY) = \{ k\in\Z : \opSC_k(\cX,\cY)\ne\emptyset\} \end{equation}
and the associated index
\begin{equation}\label{eq:sc} \operatorname{sc}(\cX,\cY) = \begin{cases} 0\ &\text{if}\ \mathbb{I}_{\text{SC}}(\cX,\cY)\cap\N=\emptyset,\\
    \min \mathbb{I}_{\text{SC}}(\cX,\cY)\cap\N\ &\text{otherwise.} \end{cases} \end{equation}
Combining \Cref{T:SCdichotomy}\ref{T:SCdichotomy0} with  the inclusion $\opSC_k(\cX,\cY)\subseteq \opEAE_k(\cX,\cY)$, we see that $\mathbb{I}_{\text{SC}}(\cX,\cY)\subseteq\eae(\cX,\cY)\Z$. By \Cref{T:SCdichotomy}\ref{T:SCdichotomy2}--\ref{T:SCdichotomy1},  our main question --- whether EAE and SC are equivalent for every pair of Fredholm operators on~$\cX$ and~$\cY$, respectively --- boils down to whether
$\mathbb{I}_{\text{SC}}(\cX,\cY)=\eae(\cX,\cY)\Z$.
We  address this question in the following proposition.

\begin{prop}\label{P:SC1806}
  Let~$\cX$ and~$\cY$ be Banach spaces. Then $\opSC_k(\cX,\cY)=\opEAE_k(\cX,\cY)$ for every $k\in\Z$ if and only if $\operatorname{sc}(\cX,\cY)=\eae(\cX,\cY)$.

In general, $\operatorname{sc}(\cX,\cY) = n\eae(\cX,\cY)$  for some $n\in\N_0$, and the following chain of inclusions holds:
\begin{equation}\label{SC1806incl}
\begin{aligned}
\operatorname{sc}(\cX,\cY)\Z & \subseteq  \mathbb{I}_{\normalfont{\text{SC}}}(\cX,\cY)
=  \{ k\in\Z : \opSC_k(\cX,\cY)
= \opEAE_k(\cX,\cY)\ne\emptyset\}\\
& \subseteq\eae(\cX,\cY)\Z
=  \{ k\in\Z : \opEAE_k(\cX,\cY)\ne\emptyset\}
=  \mathbb{I}_{\Phi}(\cX)\cap\mathbb{I}_{\Phi}(\cY).
\end{aligned}
\end{equation}
\end{prop}

\begin{rem}
The first part of Proposition~\ref{P:SC1806} implies that the second inclusion in~\eqref{SC1806incl} is an equality if and only if $\operatorname{sc}(\cX,\cY)=\eae(\cX,\cY)$, in which case the first inclusion  is also an equality. In fact, we do not know any instances where the first inclusion in~\eqref{SC1806incl} is proper, and we conjecture that it may always be an equality; see \Cref{S:SC} for a more detailed discussion of this question.
\end{rem}

  We conclude this overview of our main findings with some results that illustrate the values which the numerical indices~$\eae(\cX,\cY)$ and~$\operatorname{sc}(\cX,\cY)$  can take when various incomparability conditions are imposed on the Banach spaces~$\cX$ and~$\cY$. This work is motivated by, and closely related to,
  the seminal result of Ter Horst, Messer\-schmidt, Ran and Roe\-lands that we stated in  \Cref{T:tHMRR}.
  We begin with a result whose first part is simply a restatement of \Cref{T:tHMRR}\ref{T:tHMRR1},  while  its second part contains \Cref{T:tHMRR}\ref{T:tHMRR2} as a special case, corresponding to $k_0=1$.

\begin{thm}\label{T:essinNew}
  \begin{enumerate}[label={\normalfont{(\roman*)}}]
\item\label{T:essinNew1} Let~$(\cX,\cY)$ be a pair of essentially incomparable Banach spaces. Then $\operatorname{sc}(\cX,\cY)=0$.
\item\label{T:essinNew2} For every $k_0\in\N_0$, there exists a pair of essentially incomparable Banach spaces $(\cX,\cY)$ such that $\eae(\cX,\cY) = k_0$.
  \end{enumerate}
\end{thm}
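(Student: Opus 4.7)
The plan is to treat the two parts separately. Part~\ref{T:essinNew1} will be an immediate consequence of \Cref{T:tHMRR}\ref{T:tHMRR1}: if $k\in\mathbb{I}_{\text{SC}}(\cX,\cY)$ then there exists some $(U,V)\in\opSC_k(\cX,\cY)$, and \Cref{T:tHMRR}\ref{T:tHMRR1} places this pair in $\opEAE_0(\cX,\cY)$, forcing $k=0$. Hence $\mathbb{I}_{\text{SC}}(\cX,\cY)\subseteq\{0\}$, so \eqref{eq:sc} yields $\operatorname{sc}(\cX,\cY)=0$; no construction is needed here.

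For part~\ref{T:essinNew2} the strategy is to exhibit the required pair $(\cX,\cY)$ case by case on $k_0\in\N_0$. When $k_0=0$, I will take $\cX=\GM$, the Gowers--Maurey hereditarily indecomposable space; since every operator on an HI space has the form $\lambda I+S$ with $S$ strictly singular, only Fredholm operators of index zero exist on~$\cX$, giving $\mathbb{I}_\Phi(\cX)\subseteq\{0\}$. I will pair $\cX$ with an infinite-dimensional Hilbert space $\cY$, using the fact that $\GM$ contains no unconditional basic sequence to show that every bounded operator in $\sB(\cX,\cY)$ and $\sB(\cY,\cX)$ is strictly singular; this simultaneously forces essential incomparability and ensures $\mathbb{I}_\Phi(\cX)\cap\mathbb{I}_\Phi(\cY)\cap\N=\emptyset$, so that $\eae(\cX,\cY)=0$. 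When $k_0=1$, I will simply invoke \Cref{T:tHMRR}\ref{T:tHMRR2} with $\cX=\ell_p$, $\cY=\ell_q$, and $1\le p<q<\infty$: Pitt's theorem provides essential incomparability, the forward and backward shifts on each $\ell_r$ exhaust all integer Fredholm indices, and $\eae(\cX,\cY)=1$ follows.

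The bulk of the work falls in the case $k_0\ge 2$, where the plan is to construct two essentially incomparable Banach spaces $\cX$ and $\cY$ each satisfying $\mathbb{I}_\Phi=k_0\Z$. I will use a Gowers-type ``block shift'' variant of $\Go$ in which the distinguished shift is semi-Fredholm of codimension $k_0$ and every operator on the space is a strictly singular perturbation of a polynomial in this shift (and its generalised inverse); the ``few operators'' feature of the construction then confines the Fredholm indices to the subgroup of~$\Z$ generated by the index of the shift, namely $k_0\Z$, while the shift itself and its iterates realise this whole subgroup. To obtain $\cX$ and $\cY$ as an essentially incomparable pair, I will run the construction twice with mutually unrelated parameters, in the spirit of the refinements developed by Aiena, Gonz\'alez and Ferenczi, and verify that every operator between the two resulting spaces is strictly singular. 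The resulting pair will then satisfy $\mathbb{I}_\Phi(\cX)\cap\mathbb{I}_\Phi(\cY)\cap\N=k_0\N$, and hence $\eae(\cX,\cY)=k_0$.

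The hard part will be arranging, simultaneously within a single Gowers-type construction, three competing requirements: (a)~a ``few operators'' property strong enough to confine $\mathbb{I}_\Phi$ to the subgroup $k_0\Z$ of~$\Z$; (b)~the presence of an explicit operator of index $\pm k_0$, so that this subgroup is precisely $k_0\Z$ and not some strictly larger $d\Z$ with $d\mid k_0$; and (c)~essential incomparability of $\cX$ with $\cY$, transverse to both~(a) and~(b). Each ingredient is delicate on its own, and the main obstacle will be weaving them together into a single coherent construction; this is where the exotic spaces of Gowers and Maurey, refined by Aiena, Gonz\'alez and Ferenczi, enter essentially.
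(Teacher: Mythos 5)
Part~\ref{T:essinNew1} is fine: it is, as the paper itself notes, just a restatement of \Cref{T:tHMRR}\ref{T:tHMRR1}, and your deduction that $\mathbb{I}_{\text{SC}}(\cX,\cY)\subseteq\{0\}$ and hence $\operatorname{sc}(\cX,\cY)=0$ is exactly right. (The paper additionally offers an alternative proof via \Cref{T:SCfredholmChar} and \Cref{R:ss_iness}\ref{inessperturb}, but that is optional.) Your treatments of $k_0=0$ and $k_0=1$ in part~\ref{T:essinNew2} are also correct, modulo a minor conflation for $k_0=0$: the reason $\mathbb{I}_\Phi(\cX)\cap\mathbb{I}_\Phi(\cY)\cap\N=\emptyset$ is that $\mathbb{I}_\Phi(\cX)=\{0\}$ by the HI property, not that the operators \emph{between} $\cX$ and $\cY$ are strictly singular; the latter is only needed for essential incomparability.

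The genuine gap is in the case $k_0\ge 2$, which you yourself flag as ``the bulk of the work.'' You propose to construct a new Gowers-type space with a $k_0$-fold shift, a strong few-operators property, and to run the construction twice with ``unrelated parameters.'' This is a plan, not a proof --- and as stated it is also over-engineered, for two reasons. First, the spaces you describe already exist: Gowers and Maurey constructed them in \cite[\S(4.3)]{gm2}; these are exactly the $k_0$-fold shift spaces denoted $\GM(k_0)$ in \Cref{T:GMspace}, and \Cref{T:GMspace}\ref{T:GMspace4} gives precisely $\mathbb{I}_\Phi(\GM(k_0))=k_0\Z$. You do not need to repeat their construction. Second, and more conceptually, you do not need \emph{both} $\cX$ and $\cY$ to have $\mathbb{I}_\Phi=k_0\Z$: the requirement is only that $\mathbb{I}_\Phi(\cX)\cap\mathbb{I}_\Phi(\cY)=k_0\Z$, so one factor can be completely classical. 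Taking $\cX=\GM(k_0)$ and $\cY=\ell_2$ suffices --- essential (in fact total) incomparability is immediate from \Cref{GMtotincompuncondbasis}, because $\GM(k_0)$ contains no unconditional basic sequence while $\ell_2$ has an unconditional basis --- and this choice handles all $k_0\in\N_0$ uniformly, eliminating your case split. The ``weaving together'' of competing requirements you identify as the obstacle is thus avoided, not overcome, by decoupling the two conditions onto the two coordinates of the pair.
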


\Cref{T:essinNew}\ref{T:essinNew1} immediately raises the question whether we can weaken the hypo\-thesis  that the pair $(\cX,\cY)$ is essentially incomparable   without losing the conclusion that  $\operatorname{sc}(\cX,\cY)=0$.  The most obvious, very modest weakening would be to assume that~$\cX$ and~$\cY$ are projec\-tive\-ly in\-comparable in the following sense.

\begin{defn}\label{D:ProjIncomp} A pair of  Banach spaces $(\cX,\cY)$ is \emph{projectively incomparable} if no infinite-dimensional, complemented subspace of~$\cX$ is isomorphic to a complemented subspace of $\cY$.
\end{defn}

However, it turns out that  this hypothesis is too weak to imply that $\operatorname{sc}(\cX,\cY)=0$, as our next result will show. It also contains some information about the possible values of the indices~$\eae(\cX,\cY)$ and~$\operatorname{sc}(\cX,\cY)$.

\begin{thm}\label{thm:improjEAE=SC}
\begin{enumerate}[label={\normalfont{(\roman*)}}]
\item\label{T:improjEAE=SCv1} For every  $k_0\in\N$, there exists a pair of projectively incomparable Banach spaces~$(\cX,\cY)$ such that $\operatorname{sc}(\cX,\cY)=\eae(\cX,\cY)=k_0$.
\item\label{T:improjEAE=SCv2} For every  $k_0\in\N$, there exists a pair of projectively incomparable Banach spaces~$(\cX,\cY)$ such that    $\eae(\cX,\cY)=1$ and 
$\operatorname{sc}(\cX,\cY)=k_0$.
\end{enumerate}
\end{thm}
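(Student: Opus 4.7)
The plan for both parts is to construct the required Banach spaces as hereditarily indecomposable (HI) spaces of Gowers--Maurey type, exploiting the dichotomy in \Cref{T:SCdichotomy}\ref{T:SCdichotomy2} to reduce the verification of $\operatorname{sc}(\cX,\cY)$ to exhibiting a single Schur-coupled pair at the required index, together with \Cref{P:SC1806}\ref{P:SC1806iii}, which identifies $\eae(\cX,\cY)\Z$ with $\mathbb{I}_\Phi(\cX)\cap\mathbb{I}_\Phi(\cY)$. The two essential inputs are: (a) Gowers-type HI spaces carrying a Fredholm shift operator of any prescribed negative index, which allows precise control over $\mathbb{I}_\Phi$; and (b) refinements of the Gowers--Maurey construction in the style of Ferenczi and Aiena--Gonz\'{a}lez, which yield pairs of HI-like spaces that are projectively incomparable but still admit non-strictly-singular cross-operators.

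For part~\ref{T:improjEAE=SCv1}, given $k_0\in\N$, I would take $\cX$ and $\cY$ to be two distinct Gowers-type shift spaces built from disjoint Maurey--Rosenthal generating data, each carrying a Fredholm shift of index $-k_0$ and satisfying $\mathbb{I}_\Phi(\cX)=\mathbb{I}_\Phi(\cY)=k_0\Z$. The disjointness of the generating data makes $\cX$ and $\cY$ projectively incomparable, while \Cref{P:SC1806}\ref{P:SC1806iii} yields $\eae(\cX,\cY)=k_0$. To pin down $\operatorname{sc}(\cX,\cY)=k_0$, it suffices to exhibit one pair $(U,V)\in\opSC_{k_0}(\cX,\cY)$; the natural candidate is to construct an invertible block operator $M\in\sB(\cX\oplus\cY)$ whose diagonal entries are invertible and whose off-diagonal entries implement a bounded, non-strictly-singular interaction between the two shift structures, and to extract $U$ and $V$ as the Schur complements of $M$. \Cref{T:SCdichotomy}\ref{T:SCdichotomy2} then promotes this single pair to $\opSC_{k_0}(\cX,\cY)=\opEAE_{k_0}(\cX,\cY)$, and \Cref{P:SC1806}\ref{P:SC1806i} together with the equality $\eae(\cX,\cY)=k_0$ forces $\operatorname{sc}(\cX,\cY)=k_0$.

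For part~\ref{T:improjEAE=SCv2}, I would modify the construction so that each of $\cX$ and $\cY$ carries a Fredholm shift of index $-1$ (so that $\mathbb{I}_\Phi(\cX)=\mathbb{I}_\Phi(\cY)=\Z$ and hence $\eae(\cX,\cY)=1$), but the spaces $\sB(\cX,\cY)$ and $\sB(\cY,\cX)$ are arranged so that, modulo strictly singular perturbations, every operator is a polynomial in a fixed index-$(-k_0)$ cross-shift. Under this constraint, for any pair $(U,V)\in\opSC_k(\cX,\cY)$ the off-diagonal entries $B\in\sB(\cY,\cX)$ and $C\in\sB(\cX,\cY)$ of the associated invertible block operator combine via the Schur-complement identity $U=A-BD^{-1}C$ in such a way that the Fredholm index $k=i(U)=i(V)$ is constrained to be a multiple of $k_0$, giving $\mathbb{I}_{\text{SC}}(\cX,\cY)\subseteq k_0\Z$. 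The reverse inclusion $k_0\Z\subseteq\mathbb{I}_{\text{SC}}(\cX,\cY)$ is obtained by iterating a fundamental SC pair of index $k_0$ and using the bilateral symmetry of SC under replacing $(U,V)$ by their compositions with cross-shifts.

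The principal obstacle, particularly in part~\ref{T:improjEAE=SCv2}, is the index-rigidity argument: designing the Gowers--Maurey data so that the cross-operator spaces have precisely the desired structure modulo strictly singular operators, and then tracking this through the Schur-complement identity to extract the divisibility constraint $k_0\mid k$. Both the construction of the rigid cross-operator algebra and the index-rigidity deduction depend sensitively on fine properties of the Gowers--Maurey norm and on the classification of strictly singular and inessential operators on HI spaces developed by Aiena--Gonz\'{a}lez and Ferenczi.
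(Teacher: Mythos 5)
Your proposal does not go through, and the gap is located precisely where the paper does its real work.

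For part~\ref{T:improjEAE=SCv1}, you propose taking $\cX$ and $\cY$ to be two distinct Gowers--Maurey shift spaces built from \emph{disjoint} generating data, and then producing a Schur-coupled pair by arranging a ``non-strictly-singular interaction'' between the two shift structures. But disjointness is exactly what kills this plan: two Gowers--Maurey-type spaces built on unrelated data will typically be \emph{totally} incomparable (they have no unconditional basic sequences and no common infinite-dimensional subspace structure), hence \emph{essentially} incomparable, and then Theorem~\ref{T:essinNew}\ref{T:essinNew1} forces $\operatorname{sc}(\cX,\cY)=0$. Equivalently, if every operator between $\cX$ and $\cY$ is strictly singular, the off-diagonal blocks $B,C$ of your proposed invertible $M$ are inessential, so $U=A-BD^{-1}C$ and $V=D-CA^{-1}B$ are inessential perturbations of isomorphisms and therefore have index~$0$, never $k_0$. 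The whole difficulty is to get projective incomparability \emph{without} essential incomparability, and the mechanism the paper uses is not ``two disjoint spaces'' but rather ``one space and a carefully chosen subspace of it'': $\cY_2$ is taken to be $\overline{\ran(I_{\GM(k_0)}-L_{k_0}-B)}$ for a small nuclear perturbation $B$ supplied by the Lebow--Schechter argument (\Cref{lemma_LS}), making $\cY_2$ infinite-codimensional in $\GM(k_0)$ while still being a genuine subspace. The inclusion $S\colon\cY_2\hookrightarrow\GM(k_0)$ and the corestriction $T$ of $I-L_{k_0}-B$ then give $I_{\GM(k_0)}-ST = L_{k_0}+B\in\Phi_{k_0}$, which via \Cref{T:SCfredholmChar} yields $\opSC_{k_0}(\GM(k_0),\cY_2)\ne\emptyset$. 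Projective incomparability of $\GM(k_0)$ with $\cY_1\oplus\cY_2$ (for $\cY_1$ an unconditional-basis space such as $\ell_2$) then comes from indecomposability of $\GM(k_0)$ plus the fact that it has no infinite-codimensional isomorphic copies of itself (\Cref{T:GMspace}\ref{T:GMspace3}--\ref{T:GMspace4}). You name-check Aiena--Gonz\'alez and Ferenczi, but your concrete sketch abandons their subspace trick, which is the load-bearing idea.

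For part~\ref{T:improjEAE=SCv2}, your plan is to build a single rigid cross-operator algebra in which every operator from $\cX$ to $\cY$ is, modulo strictly singular perturbations, a polynomial in a fixed index-$(-k_0)$ cross-shift, and then extract the divisibility $k_0\mid k$ from the Schur-complement identity. Leaving aside whether such a cross-algebra can actually be realized (this is a much stronger structural demand than anything in \cite{gm2}), this is not needed: the paper reaches the same conclusion by purely soft means. It decomposes $\cX=\cX_1\oplus\cX_2$ and $\cY=\cY_1\oplus\cY_2$ with $\cX_1,\cY_1$ chosen from $\{\ell_p\}\cup\{c_0\}$ (which force $\eae(\cX,\cY)=1$), $\cX_2=\GM(k_0)$, and $\cY_2$ as above. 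Then the pairs $(\cX_1,\cY_1)$, $(\cX_1,\cY_2)$, $(\cX_2,\cY_1)$ are all totally, hence essentially, incomparable, and \Cref{L:essincomp}\ref{L:essincomp2} collapses $\mathbb{I}_{\text{SC}}(\cX,\cY)=\mathbb{I}_{\text{SC}}(\cX_2,\cY_2)$. The upper bound $\mathbb{I}_{\text{SC}}(\cX_2,\cY_2)\subseteq\mathbb{I}_\Phi(\cX_2)=k_0\Z$ is then automatic from \Cref{T:GMspace}\ref{T:GMspace4}, and the lower bound $k_0\Z\subseteq\mathbb{I}_{\text{SC}}(\cX_2,\cY_2)$ is part~\ref{T:improjEAE=SCv1}'s construction combined with \Cref{L:SC}\ref{L:SC2}. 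There is no need to analyze the internal structure of $\sB(\cX,\cY)$ modulo strictly singular operators, and in particular no need for the ``index-rigidity'' you identify as the principal obstacle: the constraint $k\in k_0\Z$ already follows from $\mathbb{I}_\Phi(\GM(k_0))=k_0\Z$ once the direct-sum reduction via essential incomparability of the off-diagonal pairs is in place.

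In short: the key missing ideas are the Lebow--Schechter perturbation that produces a suitable infinite-codimensional subspace $\cY_2\subset\GM(k_0)$ with $\opSC_{k_0}(\GM(k_0),\cY_2)\ne\emptyset$, and the use of direct-sum decompositions plus \Cref{L:essincomp} to localize both the projective-incomparability verification and the computation of $\mathbb{I}_{\text{SC}}$ to the $\GM(k_0)$--$\cY_2$ pair. Your two-disjoint-spaces setup would in fact yield $\operatorname{sc}(\cX,\cY)=0$, and your cross-operator-algebra programme is both much harder and unnecessary.
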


\Cref{thm:improjEAE=SC} is highly surprising because the difference between essential and projective incomparability is very subtle, as evidenced by the fact that it took nearly 30 years to find an example which distinguishes them. Indeed, Tarafdar \cite{T72a,T72b} asked  in 1972 whether projective incomparability implies  essential  incomparability, having noted that the converse is true, but it was not until 2000 that  Aiena and Gon\-z\'{a}\-lez  answered this question by giving a counterexample (see \cite[Proposition~3.7]{AG}). It  relied on a sophisticated Banach space constructed by Gowers and Maurey \cite{gm2}. To the best of our knowledge, no simpler examples have subsequently been found.
We shall discuss  the  relationship between essential and projective incomparability in more detail in \Cref{S:Incomp}.

In view of \Cref{thm:improjEAE=SC}, let us consider what may happen when~$\cX$ and~$\cY$ are \emph{not}  projectively incomparable. Then they contain isomorphic, infinite-dimensional complemented subspaces; that is, $\cX$ and~$\cY$ admit decompositions of the form
\begin{equation}\label{notPI}
\cX = \cX_1\oplus \cX_2\qquad\text{and}\qquad \cY=\cY_1\oplus \cY_2\quad\text{with}\quad \cX_2\cong \cY_2,
\end{equation}
where $\cX_2$ and $\cY_2$ are infinite-dimensional. The next proposition answers the question when is $\eae(\cX,\cY)=\scr(\cX,\cY)$ in the case where $\cX_1$, $\cX_2$ and~$\cY_1$ are pair\-wise essentially incomparable? Its statement involves the greatest common divisor (gcd) of two quantities that could potentially both be~$0$, in which case the gcd is not defined. We fix this issue by adopting the convention that $\gcd(0,0)=0$.

\begin{prop}\label{P:beyondprojincomp}
Let~$\cX$  and~$\cY$ be Banach spaces that decompose as in~\eqref{notPI}, and suppose that each of the pairs $(\cX_1,\cX_2)$ and $(\cY_1,\cY_2)$ is essentially incomparable. Then
\begin{equation}\label{nonPIeaeform}
\eae(\cX,\cY)=\gcd(\eae(\cX_1,\cY_1),\eae(\cX_2,\cY_2)).
\end{equation}
Suppose additionally that the pair $(\cX_1,\cY_1)$ is  essentially incomparable. Then
\begin{equation}\label{nonPIscform}
\scr(\cX,\cY) =\scr(\cX_2,\cY_2) = \eae(\cX_2,\cY_2).
\end{equation}
In particular, EAE and SC coincide for all pairs of Fredholm operators on $\cX$ and~$\cY$, i.e., $\eae(\cX,\cY)=\scr(\cX,\cY)$, if and only if $\eae(\cX_2,\cY_2)$ divides $\eae(\cX_1,\cY_1)$.
\end{prop}

Note that  we do not demand that the isomorphic subspaces~$\cX_2$ and~$\cY_2$ are infinite-dimensional in \Cref{P:beyondprojincomp}. Therefore part~\ref{T:essinNew1} of \Cref{T:essinNew} appears as a special case of \Cref{P:beyondprojincomp} corresponding to $\cX_2=\cY_2=\{0\}$, while the case where $\cX$ and $\cY$ are isomorphic is obtained by taking $\cX_1=\cY_1=\{0\}$.

In analogy with~\Cref{thm:improjEAE=SC}, it turns out that the second part of \Cref{P:beyondprojincomp}  is no longer true if we  replace the  hypothesis that  the pair $(\cX_1,\cY_1)$ is essentially incomparable with the weaker hypothesis that it is projectively incomparable.

\begin{thm}\label{T:improjUnited}
  \begin{enumerate}[label={\normalfont{(\roman*)}}]
 \item\label{T:improjV1} For every  $k_0\in\N$, there exist infinite-dimensional Banach spaces~$\cX_1$, $\cY_1$ and~$\cZ$ such that:
  \begin{enumerate}[label={\normalfont{(\arabic*)}}]
  \item\label{T:improjV1i} The pair $(\cX_1,\cY_1)$ is projectively incomparable.
  \item\label{T:improjV1ii} The pairs $(\cX_1,\cZ)$ and $(\cY_1,\cZ)$ are essentially incomparable.
  \item\label{T:improjV1iv} $\eae(\cZ,\cZ) = 0$.
  \item\label{T:improjV1iii} The Banach spaces $\cX = \cX_1\oplus \cZ$ and $\cY=\cY_1\oplus\cZ$ satisfy
    \[ \operatorname{sc}(\cX,\cY) = \eae(\cX,\cY)=k_0. \]
\end{enumerate}
\item\label{T:improjV2}
  For every  $k_0\in\N$, there exist infinite-dimensional Banach spaces~$\cX_1$, $\cY_1$ and~$\cZ$ satisfying~\ref{T:improjV1i}--\ref{T:improjV1iv} above, and such that the Banach spaces $\cX = \cX_1\oplus \cZ$ and $\cY=\cY_1\oplus\cZ$ satisfy
  $\eae(\cX,\cY)=1$ and 
  $\operatorname{sc}(\cX,\cY)=k_0$.
\end{enumerate}
\end{thm}

\begin{rem}\label{R:EssIncVsProjInc}
  Let us compare and contrast \Cref{T:improjUnited} with  \Cref{P:beyondprojincomp}. To align notation, note that $\cX_2=\cY_2=\cZ$. \Cref{T:improjUnited}\ref{T:improjV1ii} implies that~\eqref{nonPIeaeform} holds true. However, \eqref{nonPIscform}  fails for  the pair $(\cX,\cY)$ in  both parts~\ref{T:improjV1} and~\ref{T:improjV2} of \Cref{T:improjUnited} because they satisfy $\operatorname{sc}(\cX,\cY)=k_0\ne 0=\eae(\cZ,\cZ)$. This difference is due to the fact that~\eqref{nonPIscform} requires that the pair $(\cX_1,\cY_1)$ is essentially incomparable, but we only know that it is  projectively incomparable in \Cref{T:improjUnited}.
\end{rem}

\subsection*{Conclusion}
Prior to this paper, at the level of general Banach spaces, the only conclusive results regarding the question whether EAE and SC coincide for Fredholm operators were that they do if the underlying spaces are isomorphic, and that there exist examples where they do not if the  underlying spaces are essentially incomparable. We have shown that the result for  essentially incomparable spaces does not carry over to projectively incomparable spaces (see Theorem \ref{thm:improjEAE=SC}), despite the fact that the difference between these two incomparability notions is very subtle.

In the case where the Banach spaces~$\cX$ and $\cY$ admit decompositions of the form~\eqref{notPI} in which the sub\-spaces~$\cX_1$, $\cY_1$ and~$\cX_2\,(\cong\cY_2)$ are pairwise essentially incomparable, the question whether EAE and SC coincide for Fredholm operators is completely resolved in \Cref{P:beyondprojincomp}; the answer can be expressed in terms of the values of the Fredholm index of operators on the subspaces~$\cX_1$, $\cY_1$ and~$\cX_2$. \Cref{T:improjUnited} shows that, once again, this result does not carry over to projectively  incomparable spaces; see \Cref{R:EssIncVsProjInc} for details.

The question that remains is whether one can always find a decomposition of the form~\eqref{notPI} in which the subspaces~$\cX_1$, $\cY_1$ and~$\cX_2$ are pairwise essentially incomparable. Unfortunately, this is not possible, even if we replace ``essentially incomparable'' with ``projectively incomparable'',  as we shall see in \Cref{cor:nomaxprojincomp} and \Cref{P:Ex2}.

The above results rely on the remarkable observation in \Cref{T:SCdichotomy} that, for Banach spaces $\cX$ and $\cY$ and $k\in\Z$, either no pair of operators \mbox{$(U,V)\in\Phi_k(\cX)\times \Phi_{k}(\cY)$} is SC, or a pair of this kind which is SC exists, in which case the set of all such pairs that are SC is the same as the set of all such pairs that are EAE. This means that the question whether EAE and SC coincide for Fredholm operators on~$\cX$ and~$\cY$ reduces to determining the sets of indices for which pairs of Fredholm operators on~$\cX$ and~$\cY$ with these particular indices that are EAE or SC, respectively, exist.

Our analysis of these sets led us to define the numerical indices $\eae(\cX,\cY)$ and $\scr(\cX,\cY)$ which satisfy that $\eae(\cX,\cY)=\scr(\cX,\cY)$ if and only if EAE and SC coincide for all Fredholm operators on $\cX$ and $\cY$. We have computed their values in various cases; see Theorems~\ref{T:essinNew}, \ref{thm:improjEAE=SC} and~\ref{T:improjUnited}.

\subsection*{Organization}
We conclude this introduction with a brief outline of how the remainder of this paper is organized. It  consists of six sections, including the present.

In \Cref{S:Incomp} we elaborate on the incomparability notions for Banach spaces introduced in Definitions~\ref{D:EssIncomp} and~\ref{D:ProjIncomp}, focussing on their connections with certain classes of operators.  \Cref{S:IntroProofs} contains a characterization of when the set $\opEAE_k(\cX,\cY)$ is non-empty for Banach spaces~$\cX$ and~$\cY$ and $k\in\Z$, and also the proof of Equation~\eqref{nonPIeaeform}.
It turns out to be much more complicated to obtain a similar characterization for the non-emptiness of the set $\opSC_k(\cX,\cY)$, and only a partial analogue is obtained in \Cref{S:SC}, where we also prove \Cref{P:SC1806} and the remainder of~\Cref{P:beyondprojincomp}. These results rely strongly on a novel characterization of the existence of Schur-coupled Fredholm operators of a given index that we establish in \Cref{S:TechThm}. This characterization may be viewed as the fundamental new insight of the paper. \Cref{T:SCdichotomy} is also proved in \Cref{S:TechThm}.

Finally, in \Cref{sec:improj} we use some of the ``exotic'' Banach spaces constructed by Gowers and Maurey, together with ideas from subsequent work of Aiena, Gonz\'{a}lez and Ferenczi, to prove Theorems~\ref{T:essinNew}, \ref{thm:improjEAE=SC} and~\ref{T:improjUnited}.

\section{Incomparability notions for Banach spaces and their connection to operator theory}\label{S:Incomp}

\noindent
The notions of essential and projective incomparability of a pair of Banach spaces will play a key role in the  final section of this paper, where Theorems~\ref{T:essinNew}, \ref{thm:improjEAE=SC} and~\ref{T:improjUnited} are proved. However,  there are certain related notions and results that we shall require be\-fore\-hand. For that reason, we survey the relevant  material at this point.

The  formal definitions of   essential and projective incomparability were already given in Definitions~\ref{D:EssIncomp} and~\ref{D:ProjIncomp}, respectively. We refer to \cite[Section~7.5]{A04} for a much more com\-pre\-hen\-sive treatment of them than we can give here. Indeed, we shall consider only the aspects that we require later, namely their relationship and certain connections to operator theory. This will involve a third incomparability notion, which is stronger, older and arguably more ``natural'' than the other two. It is defined as follows.

\begin{defn}\label{D:totalincomparability}
A pair of Banach spaces~$(\cX,\cY)$ is \emph{totally incomparable} if no closed, infinite-dimensional subspace of~$\cX$ embeds isomorphically into~$\cY$.
\end{defn}

 Totally incomparable Banach spaces are clearly  projectively incomparable, and  it is well known and not hard to see that the converse fails; for instance, $\ell_2$~embeds into~$L_1[0,1]$, but not complementably, so~$L_1[0,1]$ and~$\ell_2$ are projectively incomparable without being totally incomparable.

However, more is true, namely that essential incomparability lies between these two properties, in the sense that total incomparability implies  essential incomparability, which in turn implies  projective incomparability. The easiest way to explain this goes via the following two operator-theoretic notions, which will also be useful elsewhere in this work.

\begin{defn}\label{D:ssIness}  Let~$\cX$ and~$\cY$ be Banach spaces. An operator $T\in\sB(\cX,\cY)$ is:
  \begin{enumerate}[label={\normalfont{(\roman*)}}]
  \item\label{D:ss} \emph{strictly singular} if, for every $\epsilon>0$, every infinite-dimensional subspace~$\cW$ of~$\cX$ contains a unit vector~$w$ such that $\lVert Tw\rVert\le\epsilon$. In other words, the restriction of~$T$ to~$\cW$ is not an isomorphism onto its range.
  \item\label{D:Iness} \emph{inessential} if $I_\cX-ST\in\Phi(\cX)$ for every operator $S\in\sB(\cY,\cX)$.
 \end{enumerate}
\end{defn}

We write $\sS(\cX,\cY)$ and $\sE(\cX,\cY)$ for the collections of strictly singular and inessential operators, respectively, from~$\cX$ to~$\cY$. They generalize the ideal $\sK(\cX,\cY)$ of compact operators in several ways. The following remark lists the main properties that we require.
\begin{rem}\label{R:ss_iness}
\begin{enumerate}[label={\normalfont{(\roman*)}}]
\item\label{R:ss_iness1} Every compact operator is strict\-ly singular, and every strict\-ly singular operator is inessential (see for instance \cite[Theorems~7.36 and 7.44]{A04} or \cite[\S{}26.7.3]{pie}).
\item The assignments~$\sS$ and~$\sE$ are closed operator ideals in the sense of Pietsch (see for instance \cite[page~388 and Theorem~7.5]{A04} or \cite[Theorem~1.9.4, Pro\-p\-o\-si\-tion~4.2.7 and Section~4.3]{pie}).
\item\label{inessperturb} For every $k\in\Z$, the class of Fredholm operators of index~$k$ is stable under inessential perturbations in the sense that $U+T\in\Phi_k(\cX,\cY)$   whenever  $U\in\Phi_k(\cX,\cY)$ and $T\in\sE(\cX,\cY)$ (see for instance \cite[Theorem~7.23]{A04}).
\end{enumerate}
\end{rem}

Comparing Definitions~\ref{D:EssIncomp} and~\ref{D:ssIness}\ref{D:Iness}, we see that a pair of Banach spaces~$(\cX,\cY)$ is essentially incomparable if and only if every operator from~$\cX$ to~$\cY$ is inessential. Both definitions display an obvious lack of symmetry, which raises the question what happens if we replace the condition  that  $I_\cX-ST\in\Phi(\cX)$ with \mbox{$I_\cY-TS\in\Phi(\cY)$} in either of them?
It turns out that it makes no difference because the following well-known elementary lemma  implies that $I_\cX-ST\in\Phi(\cX)$ if and only if \mbox{$I_\cY-TS\in\Phi(\cY)$.}

\begin{lem}\label{L:IndexEqual}
Let $S\in\sB(\cY,\cX)$ and $T\in\sB(\cX,\cY)$ for some Banach spaces~$\cX$ and~$\cY$. Then $\ker(I_\cX - ST)\cong \ker(I_\cY - TS)$. Moreover, if $\ran(I_\cX - ST)$ and $\ran(I_\cY - TS)$ are closed, then $\cX/\ran (I_\cX - ST)\cong \cY/\ran (I_\cY - TS)$.
\end{lem}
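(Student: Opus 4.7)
The plan is to exhibit the claimed isomorphisms explicitly by restricting/quotienting the operators~$T$ and~$S$, using the basic intertwining identities
\[ T(I_\cX-ST) = (I_\cY-TS)T\ands S(I_\cY-TS)=(I_\cX-ST)S. \]

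For the kernels, I would define $\phi\colon\ker(I_\cX-ST)\to\ker(I_\cY-TS)$ as the restriction of~$T$, and $\psi\colon\ker(I_\cY-TS)\to\ker(I_\cX-ST)$ as the restriction of~$S$. If $x\in\kr(I_\cX-ST)$, then $STx=x$, so $TS(Tx)=T(STx)=Tx$, showing that $\phi$ lands in $\kr(I_\cY-TS)$; the corresponding statement for $\psi$ is analogous. The fixed-point relations $STx=x$ for $x\in\kr(I_\cX-ST)$ and $TSy=y$ for $y\in\kr(I_\cY-TS)$ then give $\psi\circ\phi=\text{id}$ and $\phi\circ\psi=\text{id}$. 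Since $\phi$ and $\psi$ are restrictions of bounded operators, they are continuous, hence mutually inverse isomorphisms of Banach spaces.

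For the quotient spaces, I would dualize the above argument: assuming that $\ran(I_\cX-ST)$ and $\ran(I_\cY-TS)$ are closed (so the quotients are Banach spaces), define
\[ \wtil{\phi}\colon\cX/\ran(I_\cX-ST)\to\cY/\ran(I_\cY-TS),\qquad x+\ran(I_\cX-ST)\mapsto Tx+\ran(I_\cY-TS), \]
and define $\wtil{\psi}$ analogously with the roles of $S,T$ and $\cX,\cY$ swapped. Well-definedness of $\wtil\phi$ follows from the first intertwining identity above: if $x=(I_\cX-ST)z$, then $Tx=(I_\cY-TS)Tz\in\ran(I_\cY-TS)$; boundedness is immediate from the definition of the quotient norm together with boundedness of~$T$. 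The analogous argument handles $\wtil\psi$. To see that $\wtil\psi\circ\wtil\phi$ is the identity, note that for any $x\in\cX$,
\[ STx-x = -(I_\cX-ST)x\in\ran(I_\cX-ST), \]
so $\wtil\psi\bigl(\wtil\phi(x+\ran(I_\cX-ST))\bigr) = STx+\ran(I_\cX-ST)=x+\ran(I_\cX-ST)$, and symmetrically for $\wtil\phi\circ\wtil\psi$. Hence $\wtil\phi$ and $\wtil\psi$ are mutually inverse bounded bijections between Banach spaces, so they are isomorphisms.

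I do not anticipate any significant obstacle: the whole argument is a piece of bookkeeping built around the intertwining identities for~$I-ST$ and~$I-TS$. The only point requiring mild care is to verify that the quotient maps $\wtil\phi$ and $\wtil\psi$ are well defined, which is exactly where the two intertwining identities enter.
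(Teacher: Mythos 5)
Your proposal is correct: the intertwining identities $T(I_\cX-ST)=(I_\cY-TS)T$ and $S(I_\cY-TS)=(I_\cX-ST)S$ do exactly what you want, yielding mutually inverse bounded maps between the two kernels (via restriction of $T$ and $S$) and between the two quotients (via the induced quotient maps). The paper, however, does not give a proof of this lemma at all --- it simply cites the literature (Pietsch 1963, Satz 1; Mertins 1978, S\"atze 2.4-A--2.5-A) and also observes that the result is a consequence of general facts about Schur coupling, noting that $I_\cX-ST$ and $I_\cY-TS$ are Schur coupled via $A=I_\cX$, $B=S$, $C=T$, $D=I_\cY$. Your argument is thus the self-contained version of what is only referenced in the paper; it is essentially the standard direct proof and is the kind of computation that the Schur-coupling machinery would carry out in the background. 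One minor presentational nit: when you conclude that $\phi$ and $\psi$ are ``mutually inverse isomorphisms of Banach spaces,'' it is worth noting explicitly that the kernels are closed subspaces and hence Banach spaces, so that ``isomorphism'' makes sense; similarly, the hypothesis that the ranges are closed is what makes the quotients Banach spaces (you do say this, which is good). Nothing is missing.
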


\begin{proof}
The earliest mention of the first part of this result that we know of is \cite[Satz~1]{pietsch1963}, while both parts can be found in \cite[S\"{a}tze~2.4-A--2.5-A]{mertins}. Alternatively, the result follows from \cite[page~211, properties~1 and~6]{BGKR05}, see also \cite[Proposition 1]{BT92a}, because the operators $I_\cX - ST$ and $I_\cY - TS$ are Schur coupled via $A=I_\cX$, $B=S$, $C=T$ and $D= I_\cY$.
\end{proof}

Using \Cref{D:ssIness} and \Cref{R:ss_iness}\ref{R:ss_iness1}, we can explain the relationship between total, essential and projective incomparability of a pair of Banach spaces~$(\cX,\cY)$ as follows.
\begin{rem}\label{R:incompnotions}
\begin{enumerate}[label={\normalfont{(\roman*)}}]
\item\label{R:incompnotions1}  If~$\cX$ and~$\cY$ are totally incomparable, then clearly every operator between~$\cX$ and~$\cY$ is strictly singular and therefore inessential, so~$\cX$ and~$\cY$ are essentially incomparable.
\item If~$\cX$ and~$\cY$ are essentially incomparable, then they are also projectively incomparable. Indeed, suppose contrapositively that~$\cX$ and~$\cY$ contain isomorphic, complemented infinite-dimensional subspaces. Then it is easy to construct operators  $S\in\sB(\cY,\cX)$ and  $T\in\sB(\cX,\cY)$  such that $I_\cX-ST$ is a projection with infinite-dimensional kernel and therefore not a Fredholm operator. Hence~$\cX$ and~$\cY$ are not essentially incomparable. We refer to \cite[Theorem~7.69]{A04} and the paragraph following  Definition 7.102 for further details.
\end{enumerate}
\end{rem}

The fact that~$\sS$ and~$\sE$ are operator ideals has the following important consequence, which we shall use repeatedly without further reference.
Suppose that we express an operator $T\colon \cX_1\oplus\cX_2\to\cY_1\oplus\cY_2$ between two direct sums of Banach spaces as an operator-valued matrix in the usual way, that is,
\[ T = \begin{bmatrix} T_{11} & T_{12}\\ T_{21} & T_{22}    \end{bmatrix},\quad\text{where}\quad T_{ij}\in\sB(\cX_j,\cY_i)\quad \text{for}\quad i,j\in\{1,2\}. \]
Then $T$ is strictly singular (respectively, inessential) if and only if $T_{11}$, $T_{12}$, $T_{21}$ and $T_{22}$ are strictly singular (respectively, inessential).\smallskip

We conclude this section by answering a natural question about
a pair of Banach spaces~$(\cX,\cY)$ which is \emph{not} projectively incomparable. This material will not play any role in the remainder of the paper;  we have included it simply because the question is very natural in our context.
Negating the definition of projective incomparability, we see that~$\cX$ and~$\cY$ admit decompositions of the form~\eqref{notPI},
where the isomorphic subspaces~$\cX_2$ and~$\cY_2$ are infinite-dimensional.
If the subspaces~$\cX_1$ and~$\cY_1$ fail to be  projectively incomparable, then they
contain isomorphic, complemented, infinite-dimensional subspaces.  One may wonder whether \emph{all} such subspaces  can somehow be ``transferred'' to~$\cX_2$ and~$\cY_2$, respectively, leading to the following question:

\begin{quest}
  Let~$\cX$ and~$\cY$ be Banach spaces which are not projectively incomparable. Is it always possible to find decompositions of the form~\eqref{notPI}, where 
  the subspaces~$\cX_1$ and~$\cY_1$ are projectively incomparable?
\end{quest}

The answer to this question is ``no''. We shall present two short examples showing this.
In the first, we consider Banach spaces which are $c_0$-direct sums of certain sequences of finite-dimensional Banach spaces. The formal definition is as follows. The \emph{$c_0$\nobreakdash-direct sum} of a sequence $(\cX_n)_{n\in\N}$ of Banach spaces is
\[ \Bigl(\bigoplus_{n=1}^\infty\cX_n\Bigr)_{c_0} = \bigl\{ (x_n)_{n\in\N} : x_n\in\cX_n\ (n\in\N)\ \text{and}\ \lVert x_n\rVert\to 0\ \text{as}\ n\to\infty\bigr\}, \]
endowed with the pointwise vector-space operations and with the norm given by $\lVert (x_n)\rVert = \sup_{ n\in\N}\lVert x_n\rVert$.

Bourgain, Casazza, Lindenstrauss and Tzafriri \cite[\S{}8]{bclt} classified the complemented subspaces of   $\bigl(\bigoplus_{n=1}^\infty\cX_n\bigr)_{c_0}$ in certain cases, including the following result.
\begin{thm}\label{BCLT} Let $\cZ = \bigl(\bigoplus_{n=1}^\infty\ell_1^n\bigr)_{c_0}$ or $\cZ=\bigl(\bigoplus_{n=1}^\infty\ell_2^n\bigr)_{c_0}$, and let~$\cW$ be a complemented, infinite-dimensional subspace of~$\cZ$. Then either \mbox{$\cW\cong c_0$} or $\cW\cong \cZ$.
\end{thm}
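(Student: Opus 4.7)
The plan is to follow the classical strategy of Bourgain, Casazza, Lindenstrauss and Tzafriri, whose main engines are the Pelczy\'nski decomposition method and a gliding-hump analysis of block bases with respect to the natural unconditional basis of $\cZ$ obtained by concatenating the standard bases of the finite-dimensional summands $\ell_p^n$.

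First I would record two soft self-similarity properties of $\cZ$ that will feed the decomposition method at the end. By grouping the summands $\ell_p^n$ into countably many infinite blocks, one sees $\cZ\cong\bigl(\bigoplus_{m=1}^\infty \cZ\bigr)_{c_0}$, and in particular $\cZ\cong\cZ\oplus\cZ$ and $\cZ\cong c_0\oplus\cZ$. Next I would show that every complemented, infinite-dimensional subspace $\cW$ of $\cZ$ contains a complemented copy of $c_0$: choose a normalised sequence $(w_k)$ in $\cW$ with no norm-convergent subsequence; approximating each $w_k$ by finitely supported vectors and passing to a suitable subsequence, the Bessaga--Pelczy\'nski selection principle produces a block basis equivalent to the unit vector basis of $c_0$, and the $c_0$-sum structure shows this block basic sequence spans a complemented subspace of $\cZ$, hence also of $\cW$ after composing with the projection onto $\cW$.

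The main work is then a dichotomy according to whether $\cW$ contains uniformly complemented finite-dimensional copies of $\ell_p^{k_n}$ with $k_n\to\infty$. If it does, a gliding-hump construction inside $\cW$ produces a sequence of disjointly supported blocks isomorphic to $\ell_p^{k_n}$ whose closed span is isomorphic to $\cZ$ and complemented in $\cZ$, and hence in $\cW$; one then has $\cW\cong\cZ\oplus\cW_1$ and $\cZ\cong\cW\oplus\cZ_1$, and the Pelczy\'nski decomposition argument
\[ \cW\cong\cZ\oplus\cW_1\cong\bigl(\textstyle\bigoplus_{m=1}^\infty\cZ\bigr)_{c_0}\oplus\cW_1\cong\bigl(\textstyle\bigoplus_{m=1}^\infty\cZ\bigr)_{c_0}\oplus\cZ\oplus\cW_1\cong\cZ \]
delivers $\cW\cong\cZ$. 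If on the other hand no such sequence $k_n\to\infty$ exists, then the dimensions of the $\ell_p$-pieces arising from $\cW$ are uniformly bounded; a standard perturbation/blocking argument shows that any bounded, normalised sequence in $\cW$ has a subsequence equivalent to the $c_0$-basis, so $\cW$ has a shrinking unconditional basis all of whose block bases are $c_0$-like, which forces $\cW\cong c_0$.

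The principal obstacle is the second half of the dichotomy: showing that a complemented subspace with no large $\ell_p^{k_n}$ factors is genuinely isomorphic to $c_0$, not merely to a subspace of $c_0$. This requires exploiting the restriction of the projection $\cZ\to\cW$ to each finite-dimensional summand $\ell_p^n$ and a uniform-boundedness argument to rule out hidden $\ell_p$-structure inside~$\cW$; this is precisely the point at which the hypothesis $p\in\{1,2\}$ (or more generally the special geometry of $\ell_p^n$) enters, and where I would rely on the detailed finite-dimensional estimates of \cite{bclt}. The simpler first half (Case (b)) is largely formal once the gliding-hump copy of $\cZ$ has been produced.
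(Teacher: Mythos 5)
The paper does not prove this result: Theorem~\ref{BCLT} is quoted verbatim from Bourgain, Casazza, Lindenstrauss and Tzafriri \cite[\S{}8]{bclt}, and no proof is given or claimed. So there is no ``paper's proof'' to measure your sketch against; the only comparison is with the cited source, and your outline is essentially a compressed description of the strategy in that source (Pe\l czy\'nski decomposition plus a gliding-hump dichotomy on block bases of the natural basis of~$\cZ$), which you yourself acknowledge by ending with ``I would rely on the detailed finite-dimensional estimates of \cite{bclt}.'' That is not a blind proof; it is a pointer to the same reference the paper uses.

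On the mathematics itself, the skeleton is sound but the two steps where the real work lives are both left as gestures, and the second one is a genuine gap. In the first half of the dichotomy, producing uniformly complemented copies of $\ell_p^{k_n}$ with $k_n\to\infty$ is not by itself enough: you must, by a perturbation/gliding-hump argument, replace them by pieces that are (almost) disjointly supported with respect to the direct-sum decomposition of~$\cZ$, check that their closed span is then complemented in~$\cZ$ (here one uses that disjointly supported blocks in a $c_0$-sum of finite-dimensional spaces are $1$-complemented, but the perturbed versions need a small-perturbation lemma), and only then does the span become a complemented copy of~$\cZ$ inside~$\cW$. More seriously, the second half --- ``no large $\ell_p^{k_n}$ factors implies $\cW\cong c_0$'' --- is where the $p\in\{1,2\}$-specific finite-dimensional estimates are indispensable; the phrase ``a standard perturbation/blocking argument'' does not discharge it. Zippin's characterisation of~$c_0$ would apply if you had already shown that $\cW$ has a normalised unconditional basis all of whose normalised block bases are equivalent to the $c_0$-basis, but a complemented subspace of~$\cZ$ does not obviously come with such a basis, and manufacturing one is precisely the content of the harder direction in~\cite{bclt}. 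In short: the outline is plausible and faithful to~\cite{bclt}, but it is not a proof, and the paper never intended to supply one.
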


\begin{cor}\label{cor:nomaxprojincomp}
   Suppose that~$\cX=\bigl(\bigoplus_{n=1}^\infty\ell_1^n\bigr)_{c_0}$ and~$\cY=\bigl(\bigoplus_{n=1}^\infty\ell_2^n\bigr)_{c_0}$ are decomposed as in \eqref{notPI}, with $\cX_2\cong\cY_2$ infinite-dimensional. Then $\cX_1\cong \cX$ and $\cY_1\cong \cY$.  In particular~$\cX_1$ and~$\cY_1$  both contain a complemented subspace isomorphic to~$c_0$, so they are not projectively incomparable.
\end{cor}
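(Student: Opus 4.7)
The plan is to apply \Cref{BCLT} to both decompositions $\cX=\cX_1\oplus\cX_2$ and $\cY=\cY_1\oplus\cY_2$, and then combine the hypothesis $\cX_2\cong\cY_2$ with the (standard but non-trivial) fact that~$c_0$, $\cX$ and~$\cY$ are pairwise non-isomorphic to pin down the summands. \Cref{BCLT} applied to~$\cX$ shows that each of~$\cX_1$ and~$\cX_2$ is isomorphic to~$c_0$ or to~$\cX$, and likewise each of~$\cY_1$ and~$\cY_2$ is isomorphic to~$c_0$ or to~$\cY$. A case analysis based on $\cX_2\cong\cY_2$ then finishes the identification: if $\cX_2\cong\cX$, then $\cY_2\cong\cX$, which combined with $\cY_2\cong c_0$ or $\cY_2\cong\cY$ forces $\cX\cong c_0$ or $\cX\cong\cY$, both contradicting pairwise non-isomorphism. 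Hence $\cX_2\cong c_0$, and the hypothesis gives $\cY_2\cong c_0$. Since $c_0\oplus c_0\cong c_0\not\cong\cX$, we deduce $\cX_1\cong\cX$, and similarly $\cY_1\cong\cY$, which is the main assertion of the corollary.

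For the final sentence, I would observe that~$\cX$ contains a $1$-complemented isometric copy of~$c_0$---namely the closed span of one fixed unit vector taken from each summand~$\ell_1^n$, together with the natural norm-$1$ coordinate projection---and analogously for~$\cY$. Transporting via the isomorphisms $\cX_1\cong\cX$ and $\cY_1\cong\cY$ produces complemented copies of~$c_0$ inside both~$\cX_1$ and~$\cY_1$, so these two spaces share a complemented infinite-dimensional subspace and cannot be projectively incomparable.

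The main obstacle is verifying that~$c_0$, $\cX$ and~$\cY$ are pairwise non-isomorphic. One way to argue $c_0\not\cong\cX$ (and likewise $c_0\not\cong\cY$) is that~$c_0$ is an $\cL_\infty$-space whereas~$\cX$ contains $1$-complemented copies of~$\ell_1^n$ for every~$n$, and $\ell_1^n$ is not uniformly isomorphic to any $\ell_\infty^k$. For $\cX\not\cong\cY$ the same philosophy applies: an isomorphism would transport the uniformly $1$-complemented copies of~$\ell_1^n$ in~$\cX$ to uniformly complemented isomorphs of~$\ell_1^n$ in~$\cY$, which is incompatible with the local Hilbertian structure of the $\ell_\infty$-sums $(\bigoplus_{k=1}^{K}\ell_2^k)_\infty$ in which every finite-dimensional subspace of~$\cY$ sits. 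I would probably prefer to invoke these facts from the literature rather than reprove them, since they are essentially folklore once \Cref{BCLT} is available.
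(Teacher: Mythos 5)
Your proof is correct and follows essentially the same route as the paper's: apply Theorem~\ref{BCLT} to each summand, use the pairwise non-isomorphism of $c_0$, $\cX$ and~$\cY$ to force $\cX_2\cong\cY_2\cong c_0$, and deduce $\cX_1\cong\cX$, $\cY_1\cong\cY$. The paper's proof is a one-liner that simply cites Theorem~\ref{BCLT} and asserts the pairwise non-isomorphism without justification, whereas you additionally sketch local-structure arguments (via uniformly complemented $\ell_1^n$ versus the $\cL_\infty$/locally-Hilbertian character of $c_0$ and~$\cY$) for why those three spaces are non-isomorphic; that extra material is sound in outline and not needed in the paper, which treats those non-isomorphisms as standard.
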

\begin{proof} This follows immediately from \Cref{BCLT} because~$\cX$ and~$\cY$ are not isomorphic to each other or to~$c_0$, so we must have $\cX_2\cong\cY_2\cong c_0$.
\end{proof}

Our second example is similar, but uses only reflexive Banach spaces.
It relies on the following well-known, important properties of $L_p[0,1]$ for $1<p<\infty$.
\begin{thm}\label{T:LpBackground} Let $p\in(1,\infty)$.
\begin{enumerate}[label={\normalfont{(\roman*)}}]
\item\label{T:LpBackground1} The Banach space $L_p[0,1]$ is primary; that is, if it is decomposed into a direct sum of two closed subspaces, then (at least) one of them is isomorphic to~$L_p[0,1]$.
\item\label{T:LpBackground2}  Let $q\in(1,\infty)$. Then $L_p[0,1]$ contains a complemented subspace which is isomorphic to $L_q[0,1]$ if and only if
 $q=p$ or $q=2$.
\end{enumerate}
\end{thm}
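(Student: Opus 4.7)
Plan: Both statements are classical structural results about $L_p$-spaces from Banach-space theory, and I would present the ``proof'' as a guided series of citations into the standard literature, most notably Lindenstrauss--Tzafriri's treatise on Classical Banach Spaces~II and Albiac--Kalton's Topics in Banach Space Theory, together with pointers to the original papers. No new argument is needed, so the main task is identifying the most convenient formulations and stitching them together.

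For part~\ref{T:LpBackground1}, the case $p=2$ is immediate because every infinite-dimensional complemented subspace of the separable Hilbert space $L_2[0,1]$ is itself separable and infinite-dimensional, hence isomorphic to $L_2[0,1]$. For $p\in(1,\infty)\setminus\{2\}$, I would cite the theorem of Alspach, Enflo and Odell showing that $L_p[0,1]$ is primary; the engine behind their proof is a Pe\l{}czy\'{n}ski decomposition scheme combined with the self-similarity property $L_p[0,1]\cong L_p[0,1]\oplus L_p[0,1]$ and the Banach--Saks / unconditional block basis techniques that are available in $L_p$ for $p\neq2$. The most convenient statement to quote is  Theorem~2.d.11 of Lindenstrauss--Tzafriri~II.

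For part~\ref{T:LpBackground2}, the ``$\Leftarrow$'' direction is handled in two sub-cases. The case $q=p$ is trivial. For $q=2$, I would use the classical fact that a sequence of independent standard Gaussian random variables in $L_p[0,1]$ spans, by Khintchine's inequality (or its Gaussian analogue), a subspace isomorphic to $\ell_2$ which is complemented via the Gaussian conditional-expectation projection; an iteration of this construction on a product space, using that $L_2[0,1]\cong \ell_2(L_2[0,1])$, upgrades the embedding from $\ell_2$ to $L_2[0,1]$ and yields a complemented copy of $L_2[0,1]$ inside $L_p[0,1]$. The standard reference is Chapter~6 of Albiac--Kalton. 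The ``$\Rightarrow$'' direction is an isomorphic-invariant argument: one combines the fact that $L_p[0,1]$ has type $\min(p,2)$ and cotype $\max(p,2)$ with the result (Kadec--Pe\l{}czy\'{n}ski) that the only separable rearrangement-invariant function spaces which arise as complemented subspaces of $L_p[0,1]$ for $p\neq2$ are $L_p[0,1]$ itself and $L_2[0,1]$; alternatively one can invoke the Johnson--Odell--Rosenthal analysis of complemented subspaces of $L_p$ containing $\ell_q$.

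The main obstacle, if one were to actually prove the theorem rather than cite it, would be part~\ref{T:LpBackground1}: the primariness of $L_p[0,1]$ for $p\neq2$ is genuinely deep, relying on the full strength of the decomposition method together with delicate square-function and unconditional-basis estimates specific to $L_p$. Since this result is well documented in the literature and not the focus of the present paper, the appropriate strategy is simply to quote it.
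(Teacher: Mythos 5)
Your proposal takes essentially the same route as the paper: part~\ref{T:LpBackground1} is credited to Alspach, Enflo and Odell~\cite{AEO}, and part~\ref{T:LpBackground2} is derived from the classification of complemented subspaces of $L_p$ in Albiac--Kalton~\cite[Theorem~6.4.21]{ak}. The paper cites these two sources without further commentary, which is precisely your stated strategy; the additional background you sketch (Pe\l{}czy\'{n}ski decomposition method for~(i), Gaussian variables and Kadec--Pe\l{}czy\'{n}ski for~(ii)) is accurate context, though I would caution that your type/cotype remark alone does not prove the forward direction of~(ii) for $1<p<2$ --- one genuinely needs the finer structural results you mention as an alternative, which is exactly what the cited theorem of Albiac--Kalton supplies.
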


\begin{proof} \ref{T:LpBackground1}.  This is shown in~\cite{AEO}.

  \ref{T:LpBackground2}. This follows easily from \cite[Theorem~6.4.21]{ak}.
\end{proof}

\begin{prop}\label{P:Ex2}
Let $\cX = L_p[0,1]$ and $\cY=L_q[0,1]$ for distinct $p,q\in(1,2)\cup(2,\infty)$, and suppose that $\cX$ and $\cY$ are decomposed as in~\eqref{notPI},  with $\cX_2\cong\cY_2$ infinite-dimensional. Then $\cX_1\cong \cX$ and $\cY_1\cong \cY$.  In particular~$\cX_1$ and~$\cY_1$  both contain a complemented subspace isomorphic to~$L_2[0,1]$, so they are not projectively incomparable.
\end{prop}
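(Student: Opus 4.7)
The plan is to apply the two parts of \Cref{T:LpBackground} in tandem, using primarity to force each direct summand to be either ``almost all'' of $L_p$ (resp.\ $L_q$) or to sit inside the other space, and then using the classification of complemented subspaces to rule out the latter possibility.

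First, I would apply \Cref{T:LpBackground}\ref{T:LpBackground1} to the decomposition $\cX = \cX_1\oplus\cX_2$. Since $\cX\cong L_p[0,1]$ is primary, one of the summands $\cX_1$ or $\cX_2$ must itself be isomorphic to $L_p[0,1]$. I would then argue that the second possibility cannot occur: if $\cX_2\cong L_p[0,1]$, then by hypothesis $\cY_2\cong\cX_2\cong L_p[0,1]$, so $\cY\cong L_q[0,1]$ would contain a complemented subspace isomorphic to $L_p[0,1]$. By \Cref{T:LpBackground}\ref{T:LpBackground2} (applied with the roles of $p$ and $q$ interchanged), this forces $p=q$ or $p=2$, both of which are excluded by the hypothesis that $p\neq q$ and $p,q\in(1,2)\cup(2,\infty)$. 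Hence $\cX_1\cong L_p[0,1] = \cX$. The symmetric argument, applied to $\cY=\cY_1\oplus\cY_2$ and using $\cX_2\cong \cY_2$ to push any copy of $L_q[0,1]$ back into $\cX$, yields $\cY_1\cong \cY$.

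For the final ``in particular'' clause, once $\cX_1\cong L_p[0,1]$ and $\cY_1\cong L_q[0,1]$ have been established, I would invoke \Cref{T:LpBackground}\ref{T:LpBackground2} with $q=2$ to conclude that both $L_p[0,1]$ and $L_q[0,1]$ contain a complemented subspace isomorphic to $L_2[0,1]$. Therefore $\cX_1$ and $\cY_1$ share an infinite-dimensional complemented subspace and so cannot be projectively incomparable (\Cref{D:ProjIncomp}).

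There is essentially no obstacle here: the entire argument is a bookkeeping exercise built on the two cited deep results about $L_p[0,1]$. The only point that requires a moment's care is ensuring that both exclusions $p\neq 2$ and $q\neq 2$ are used (they rule out the two asymmetric cases in \Cref{T:LpBackground}\ref{T:LpBackground2}), and that the isomorphism $\cX_2\cong \cY_2$ is used to transport the forbidden complemented subspace from one side of the dichotomy to the other.
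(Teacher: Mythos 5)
Your argument is correct and follows essentially the same route as the paper: apply primarity to rule in one of the two summands as isomorphic to the whole space, use the isomorphism $\cX_2\cong\cY_2$ to push the forbidden complemented copy of $L_p[0,1]$ (resp.\ $L_q[0,1]$) into the other space and invoke \Cref{T:LpBackground}\ref{T:LpBackground2} to get a contradiction, then apply \Cref{T:LpBackground}\ref{T:LpBackground2} once more with $q=2$ for the final clause. If anything, you are slightly more explicit than the paper in spelling out that the conditions being contradicted are $p=q$ or $p=2$.
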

\begin{proof} Since $L_p[0,1]$ is primary, either $\cX_1\cong\cX$ or $\cX_2\cong \cX$. However, the latter is impossible by \Cref{T:LpBackground}\ref{T:LpBackground2}
because $\cX_2\cong\cY_2$, which is a complemented subspace of~$L_q[0,1]$, where $q\notin\{2,p\}$. Therefore  $\cX_1\cong\cX$. The proof that $\cY_1\cong \cY$ is similar.

Another application of \Cref{T:LpBackground}\ref{T:LpBackground2} shows that~$L_p[0,1]$ and~$L_q[0,1]$ both contain a complemented subspace which is isomorphic to~$L_2[0,1]$. \end{proof}

\section{Non-emptiness of the set $\opEAE_k(\cX,\cY)$}\label{S:IntroProofs}
\noindent
The main purpose of this section is to prove the following characterization of the integers~$k$ for which the set~$\opEAE_k(\cX,\cY)$ defined in~\eqref{EAEkSCk} is non-empty. This is a natural starting point for our investigation because when $\opEAE_k(\cX,\cY)$ is empty, it is obviously equal to its subset $\opSC_k(\cX,\cY)$.

\begin{prop}\label{P:EAEfredholmIndex} The following three conditions are equivalent for every pair of Banach spaces~$(\cX,\cY)$ and every $k\in\Z\colon$
\begin{enumerate}[label={\normalfont{(\roman*)}}]
\item\label{P:EAEfredholmIndex2} $\Phi_k(\cX)\ne\emptyset$ and  $\Phi_k(\cY)\ne\emptyset$.
\item\label{P:EAEfredholmIndex1} $\opEAE_k(\cX,\cY)\neq \emptyset$.
\item\label{P:EAEfredholmIndex3} $k\in \eae(\cX,\cY)\Z$.
\end{enumerate}
\end{prop}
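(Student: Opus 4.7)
The plan is to prove (ii) $\Rightarrow$ (i) directly from the definition, (i) $\Rightarrow$ (ii) via \Cref{T:EAEfredholm} combined with a nullity-adjustment lemma, and (i) $\Leftrightarrow$ (iii) by establishing that $\mathbb{I}_\Phi(\cX)$ is an additive subgroup of~$\Z$ for every Banach space~$\cX$. The implication (ii) $\Rightarrow$ (i) is immediate from the definition~\eqref{EAEkSCk}, which forces the coordinates of any pair in $\opEAE_k(\cX,\cY)$ to lie in $\Phi_k(\cX)$ and $\Phi_k(\cY)$, respectively.

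For the equivalence (i) $\Leftrightarrow$ (iii), I would first show that $\mathbb{I}_\Phi(\cX)$ is an additive subgroup of $\Z$. It contains~$0$ because $I_\cX \in \Phi_0(\cX)$, is closed under addition by the standard multiplicativity $i(ST) = i(S) + i(T)$ of the Fredholm index, and is closed under negation. For the last property, given $T \in \Phi_k(\cX)$, I would construct $S \in \Phi_{-k}(\cX)$ explicitly: fix topological direct sum decompositions $\cX = \ker T \oplus \cX'$ and $\cX = \cY' \oplus \ran T$ with $\dim \cY' = \beta(T)$, and let $S$ be the composition of the projection $\cX \to \ran T$ along~$\cY'$ with the bounded inverse of the isomorphism $T|_{\cX'}\colon \cX' \to \ran T$; this gives $\ker S = \cY'$ and $\ran S = \cX'$, so $i(S) = \beta(T) - \alpha(T) = -k$. (Alternatively, Atkinson's theorem supplies a parametrix $P \in \Phi_{-k}(\cX)$ for~$T$.) Since the intersection of two subgroups of~$\Z$ is again a subgroup, $\mathbb{I}_\Phi(\cX) \cap \mathbb{I}_\Phi(\cY)$ equals either $\{0\}$ or $m\Z$ with $m = \min(\mathbb{I}_\Phi(\cX) \cap \mathbb{I}_\Phi(\cY) \cap \N)$; comparison with the definition~\eqref{eq:eaeDefn} of $\eae(\cX,\cY)$ yields the equivalence of (i) and~(iii).

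For (i) $\Rightarrow$ (ii), by \Cref{T:EAEfredholm}\ref{T:EAEfredholm2} it suffices to produce $U \in \Phi_k(\cX)$ and $V \in \Phi_k(\cY)$ with $\alpha(U) = \alpha(V)$. When $k = 0$ --- which covers every case in which either~$\cX$ or~$\cY$ is finite-dimensional, since in finite dimensions every Fredholm operator has index~$0$ --- the identity operators $I_\cX$ and $I_\cY$ both have nullity~$0$. When $k \neq 0$, both~$\cX$ and~$\cY$ must be infinite-dimensional, and I would then establish the following key lemma: if $\cZ$ is an infinite-dimensional Banach space and $T \in \Phi_k(\cZ)$, then for every $n \in \N_0$ there exists $T' \in \Phi_k(\cZ)$ with $\alpha(T') = \alpha(T) + n$. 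By induction on~$n$ this reduces to the case $n = 1$: pick any $y \in \cZ \setminus \ker T$ and, using Hahn--Banach together with the finite-dimensionality of $\ker T$, choose $\psi \in \cZ^*$ with $\psi(y) = 1$ and $\psi|_{\ker T} = 0$; then the operator $T' \in \sB(\cZ)$ defined by $T'x = Tx - \psi(x)Ty$ is a rank-one, hence inessential, perturbation of $T$, so $T' \in \Phi_k(\cZ)$ by \Cref{R:ss_iness}\ref{inessperturb}, and a direct calculation shows $\ker T' = \ker T \oplus \langle y \rangle$, so $\alpha(T') = \alpha(T)+1$. Applying this lemma to given operators $U_0 \in \Phi_k(\cX)$ and $V_0 \in \Phi_k(\cY)$ yields the required $U, V$ with common nullity $\max(\alpha(U_0),\alpha(V_0))$.

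The step I expect to require the most care is the kernel computation for the perturbed operator~$T'$ in the nullity-adjustment lemma, together with the closure of $\mathbb{I}_\Phi(\cX)$ under negation --- the latter is slightly less transparent than one might hope, because the natural candidate via the dual operator~$T^*$ acts on~$\cX^*$ rather than on~$\cX$, so a direct construction (or the parametrix) is required to stay on~$\cX$.
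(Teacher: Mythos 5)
Your proposal is correct and follows essentially the same three-step structure as the paper's proof: (ii)~$\Rightarrow$~(i) is immediate from the definition; (i)~$\Leftrightarrow$~(iii) rests on showing that $\mathbb{I}_\Phi(\cX)$ is an ideal (equivalently, additive subgroup) of~$\Z$, which the paper records as \Cref{L:indexideal} and \Cref{C:IphiXcapIphiY}; and (i)~$\Rightarrow$~(ii) is reduced, via \Cref{T:EAEfredholm}\ref{T:EAEfredholm2}, to producing $U\in\Phi_k(\cX)$ and $V\in\Phi_k(\cY)$ with equal nullities, obtained by perturbing given Fredholm operators. The only noteworthy departures are minor implementation choices within the lemmas: for closure of~$\mathbb{I}_\Phi(\cX)$ under negation you build a generalized inverse directly (projection onto $\ran T$ followed by the bounded inverse of $T|_{\cX'}$), whereas the paper first applies its nullity-adjustment lemma to replace $T$ by a surjection (or injection) and then takes a one-sided inverse; and your nullity-adjustment lemma is a strictly weaker, one-directional version of the paper's \Cref{perturbkernel1} --- it can only \emph{raise} the nullity by rank-one steps, whereas \Cref{perturbkernel1} realizes \emph{every} $m\in\N_0\cap[i(T),\infty)$. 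The weaker statement is enough here because matching at $\max(\alpha(U_0),\alpha(V_0))$ suffices, but be aware that the paper subsequently needs the stronger two-sided form of \Cref{perturbkernel1} (in \Cref{perturbkernel2}), so the full lemma is not wasted generality.
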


The proof of \Cref{P:EAEfredholmIndex} is relatively simple, using only~\eqref{eq:EAEalt} and two lemmas that follow from basic Fredholm theory. Both of these lemmas are almost certainly known to specialists; we include their (short) proofs for completeness.

\begin{lem}\label{perturbkernel1} Let  $T\in\Phi(\cX,\cY)$ for some infinite-di\-men\-sional Banach spaces~$\cX$ and~$\cY$. Then, for every $m\in\N_0\cap[i(T),\infty)$, there exists a finite-rank operator \mbox{$R\in\sB(\cX,\cY)$} such that $\alpha(T+R)=m$.
\end{lem}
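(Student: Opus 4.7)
The plan is to put $T$ into a block-matrix normal form with respect to decompositions that isolate $\ker T$ and a complement of $\ran T$, and then to write down an explicit finite-rank perturbation $R$ that adjusts the kernel dimension to $m$. First I would choose closed direct-sum decompositions $\cX=\ker T\oplus \cX_0$ and $\cY=\cY_0\oplus \ran T$, using that $\ker T$ is finite-dimensional and $\ran T$ has finite codimension $\beta(T)=\dim\cY_0$. Writing $T_0\colon \cX_0\to\ran T$ for the restriction of $T$, which is an isomorphism, the operator $T$ is represented by the block matrix $\sbm{0 & 0\\ 0 & T_0}$ relative to these decompositions. Note that $\cX_0$ is infinite-dimensional because $\cX$ is, and this is the only place the infinite-dimensionality hypothesis will be used.

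Setting $k=i(T)$, I would then split into two cases according to whether $m\geq\alpha(T)$ or $m<\alpha(T)$. In the first case, I would choose a finite-dimensional subspace $\cW\subseteq\cX_0$ of dimension $m-\alpha(T)$ (possible by the preceding observation) together with a bounded projection $P$ of $\cX_0$ onto $\cW$, which exists because $\cW$ is finite-dimensional. Taking $R$ to be the finite-rank operator with block matrix $\sbm{0 & 0\\ 0 & -T_0 P}$ gives $T+R = \sbm{0 & 0\\ 0 & T_0(I_{\cX_0}-P)}$, and since $T_0$ is injective, $\ker(T+R) = \ker T\oplus\cW$ has dimension $\alpha(T)+(m-\alpha(T))=m$, as required.

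In the second case, the hypothesis $m\geq k=\alpha(T)-\beta(T)$ rearranges to $\alpha(T)-m\leq\beta(T)=\dim\cY_0$, so I can choose a linear map $J\colon\ker T\to\cY_0$ whose range has dimension $\alpha(T)-m$ and whose kernel $\cK_1$ has dimension $m$. Taking $R$ with block matrix $\sbm{J & 0\\ 0 & 0}$, the operator $T+R$ sends $(x,y)\in\ker T\oplus \cX_0$ to $(Jx,T_0 y)\in\cY_0\oplus\ran T$; since the two components live in complementary subspaces and $T_0$ is injective, $\ker(T+R)=\cK_1\oplus\{0\}$ has dimension $m$. Because the construction is entirely elementary once the normal form is fixed, I do not anticipate any serious obstacle; the only mild point to track is which direction of adjustment (enlarging the kernel by absorbing a subspace of $\cX_0$, or collapsing part of $\ker T$ into a piece of the cokernel) is appropriate, and the condition $m\geq i(T)$ is exactly what makes the second case feasible.
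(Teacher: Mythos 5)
Your proof is correct and follows essentially the same strategy as the paper's: in the case $m<\alpha(T)$ you collapse a subspace of $\ker T$ into a complement of $\ran T$ (the paper's operator $A$ and projection onto $\ker T$ correspond to your $J$ in the $(1,1)$ block), and in the case $m\geq\alpha(T)$ you enlarge the kernel by absorbing a finite-dimensional subspace of a complement of $\ker T$ (the paper's $R=-TP$ with $P$ a projection onto an $m$-dimensional $\cW\supseteq\ker T$ is exactly your $-T_0P$ in the $(2,2)$ block, after identifying its $\cW$ with $\ker T\oplus\cW$ in your notation). The only cosmetic difference is that you fix a block decomposition up front, which makes the two constructions visibly dual and merges the paper's trivial case $\alpha(T)=m$ into your first case.
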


\begin{proof} We consider three cases:\\
{\it Case 1:} If $\alpha(T)=m$, then we can simply take $R=0$.\\
{\it Case 2:} If $\alpha(T)>m$, set $n=\alpha(T)-m\in\N$, and note that \mbox{$\beta(T) = \alpha(T)-i(T)\ge n$}, so we can find an $n$-dimensional subspace~$\cZ$ of~$\cY$ such that $\cZ\cap\ran T=\{0\}$. Take an operator $A\in\sB(\ker T,\cY)$ with range~$\cZ$ and a bounded linear projection~$P$ of~$\cX$ onto~$\ker T$, and define $R=AP\in\sB(\cX,\cY)$. Then $\ker (T+R)=\ker A$, which has dimension $\alpha(T)-\dim\ran A=m$.\\
{\it Case 3:} If $\alpha(T)<m$, choose an $m$-dimensional subspace~$\cW$ of~$\cX$ such that \mbox{$\ker T\subseteq\cW$}, and let $P\in\sB(\cX)$ be a projection onto~$\cW$. Then $\ker T(I_\cX-P) = \cW$, so $R=-TP$ has the required property.
\end{proof}

\begin{rem}
  The condition that $m\ge i(T)$  is necessary in \Cref{perturbkernel1}  because \[ \alpha(T+R)\ge i(T+R)=i(T) \] for every finite-rank operator~$R\in\sB(\cX,\cY)$. \end{rem}

\begin{lem}\label{L:indexideal}
For every Banach space~$\cX$, the set
\[ \mathbb{I}_\Phi(\cX) = \{ k\in\Z : \Phi_k(\cX)\ne\emptyset\} \]
is an ideal of~$\Z$.
\end{lem}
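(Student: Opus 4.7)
The plan is to verify that $\mathbb{I}_\Phi(\cX)$ is a subgroup of the additive group $(\Z,+)$; since every subgroup of~$\Z$ has the form $n\Z$ for some $n\in\N_0$, this will immediately yield the ideal property. Two of the three subgroup axioms are essentially free: $0\in\mathbb{I}_\Phi(\cX)$ because $I_\cX\in\Phi_0(\cX)$, and closure under addition follows from the logarithmic law of the Fredholm index, which says that $UT\in\Phi_{k+l}(\cX)$ whenever $T\in\Phi_k(\cX)$ and $U\in\Phi_l(\cX)$.

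The main obstacle is closure under negation: given $k\in\mathbb{I}_\Phi(\cX)$, we must exhibit an operator in $\Phi_{-k}(\cX)$. My plan is to construct one explicitly from a chosen $T\in\Phi_k(\cX)$. Since $\ker T$ and $\cX/\ran T$ are both finite-dimensional, $\ker T$ is complemented in~$\cX$ and $\ran T$ is closed and complemented, so we may fix direct-sum decompositions $\cX=\ker T\oplus\cX_1$ and $\cX=\ran T\oplus\cZ$, with $\dim\cZ=\beta(T)$. The restriction $T|_{\cX_1}\colon\cX_1\to\ran T$ is then an isomorphism; let $R\colon\ran T\to\cX_1$ be its inverse, let $\pi\colon\cX\to\ran T$ be the projection along~$\cZ$, and let $\iota\colon\cX_1\hookrightarrow\cX$ be the inclusion. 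Define $S=\iota\circ R\circ\pi\in\sB(\cX)$. A direct check yields $\ker S=\ker\pi=\cZ$ and $\ran S=\cX_1$, so $\alpha(S)=\beta(T)$ and $\beta(S)=\dim(\cX/\cX_1)=\alpha(T)$, whence
\[ i(S)=\beta(T)-\alpha(T)=-i(T)=-k. \]
Thus $-k\in\mathbb{I}_\Phi(\cX)$, as required. The construction is uniform in the sign of~$k$ (and degenerates to $S=T^{-1}$ when $k=0$), so no case analysis is needed.

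Combining these three observations, $\mathbb{I}_\Phi(\cX)$ is a subgroup of $(\Z,+)$, hence an ideal of~$\Z$. I do not foresee any hidden technical issues: the finite-dimensionality of $\ker T$ and $\cX/\ran T$ is precisely what makes all of the required decompositions and projections available, so the construction goes through without topological complications.
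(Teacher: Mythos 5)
Your proof is correct, and it takes a genuinely different route for the only nontrivial step, closure under negation. The paper's proof first invokes its Lemma~\ref{perturbkernel1} to replace an arbitrary $T\in\Phi_k(\cX)$ by a \emph{surjection} in $\Phi_k(\cX)$ (when $k\geq 0$), then observes that such an operator has a bounded right inverse of index $-k$; a symmetric argument with injections and left inverses handles $k\leq 0$. You instead build, directly from any $T\in\Phi_k(\cX)$, a generalized inverse $S=\iota R\pi$ using the splittings $\cX=\ker T\oplus\cX_1=\ran T\oplus\cZ$, which produces an operator with $\alpha(S)=\beta(T)$ and $\beta(S)=\alpha(T)$ in one stroke, without perturbation and without a case split on the sign of $k$. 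The paper's choice is natural in context, since Lemma~\ref{perturbkernel1} is proved anyway for use elsewhere; your version is more self-contained and shorter here. (You also drop the explicit check of closure under multiplication by positive integers, which the paper includes but which is superfluous once one knows that subgroups of $\Z$ are exactly its ideals.) One tiny inaccuracy in a parenthetical remark: your construction degenerates to $S=T^{-1}$ only when $T$ is actually invertible, i.e.\ $\alpha(T)=\beta(T)=0$, not merely when $i(T)=0$; this does not affect the argument.
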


\begin{proof}
The result is clear if~$\cX$ is finite-dimensional because $\mathbb{I}_\Phi(\cX)=\{0\}$ in this case. In general the set~$\mathbb{I}_\Phi(\cX)$ contains~$0$ because $I_\cX\in\Phi_0(\cX)$.  Moreover, it
  is closed under addition and under multiplication by positive integers because
  the Index Theorem implies that $ST\in\Phi_{k+m}(\cX)$ and $T^n\in\Phi_{mn}(\cX)$ for $S\in\Phi_k(\cX)$ and \mbox{$T\in\Phi_m(\cX)$} whenever $k,m\in\Z$  and   \mbox{$n\in\N$}.

  It remains to show that $-k\in\mathbb{I}_\Phi(\cX)$ whenever $k\in\mathbb{I}_\Phi(\cX)$. Suppose that \mbox{$\Phi_k(\cX)\ne\emptyset$} for some $k\in\Z$. If $k\ge 0$, \Cref{perturbkernel1} implies that $\Phi_k(\cX)$ contains a surjection $T$. Since $\ker T$ is finite-dimensional, it follows that~$T$ has a right inverse, which must be a Fredholm operator of index~$-k$. A similar argument works for $k\le 0$, except that we find that $\Phi_k(\cX)$ contains an injection which has a left inverse.
\end{proof}

\begin{cor}\label{C:IphiXcapIphiY} Let~$\cX$ and~$\cY$ be Banach spaces. Then $\mathbb{I}_\Phi(\cX)\cap\mathbb{I}_\Phi(\cY) = \eae(\cX,\cY)\Z$.
\end{cor}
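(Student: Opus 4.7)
The plan rests on the fact, established in \Cref{L:indexideal}, that $\mathbb{I}_\Phi(\cX)$ and $\mathbb{I}_\Phi(\cY)$ are ideals of $\Z$. Since the intersection of two ideals is an ideal, $\mathbb{I}_\Phi(\cX)\cap\mathbb{I}_\Phi(\cY)$ is an ideal of $\Z$ as well. By the classification of ideals of $\Z$, there is a unique $n\in\N_0$ such that $\mathbb{I}_\Phi(\cX)\cap\mathbb{I}_\Phi(\cY) = n\Z$, so the task reduces to verifying that $n = \eae(\cX,\cY)$.

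To carry this out, I would split into the two cases appearing in the definition~\eqref{eq:eaeDefn} of $\eae(\cX,\cY)$. If $\mathbb{I}_\Phi(\cX)\cap\mathbb{I}_\Phi(\cY)\cap\N = \emptyset$, then by closure of the ideal $\mathbb{I}_\Phi(\cX)\cap\mathbb{I}_\Phi(\cY)$ under additive inverses, the intersection cannot contain any nonzero integer (else its absolute value would lie in $\N$), and therefore equals $\{0\}=0\cdot\Z$; this matches $\eae(\cX,\cY)=0$. Otherwise, $n$ must be the minimum positive element of $n\Z = \mathbb{I}_\Phi(\cX)\cap\mathbb{I}_\Phi(\cY)$, which coincides with $\min\mathbb{I}_\Phi(\cX)\cap\mathbb{I}_\Phi(\cY)\cap\N = \eae(\cX,\cY)$ by definition.

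There is no serious obstacle: the corollary is essentially a repackaging of \Cref{L:indexideal} in the notation of the numerical index $\eae(\cX,\cY)$, combined with the elementary structure theorem for ideals of $\Z$.
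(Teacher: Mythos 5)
Your proposal is correct and follows essentially the same route as the paper: both rely on \Cref{L:indexideal} to conclude that $\mathbb{I}_\Phi(\cX)\cap\mathbb{I}_\Phi(\cY)$ is an ideal of $\Z$, hence of the form $n\Z$, and then read off $n=\eae(\cX,\cY)$ from the definition~\eqref{eq:eaeDefn}. The only difference is that you spell out the final identification of $n$ a little more explicitly than the paper does.
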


\begin{proof}
\Cref{L:indexideal} implies that $\mathbb{I}_\Phi(\cX)\cap\mathbb{I}_\Phi(\cY)$ is an ideal of~$\Z$, which is a principal ideal domain, so $\mathbb{I}_\Phi(\cX)\cap \mathbb{I}_\Phi(\cY)=n\Z$ for some $n\in\N_0$. It is clear from the definition~\eqref{eq:eaeDefn}
of $\eae(\cX,\cY)$ that $n=\eae(\cX,\cY)$.
\end{proof}

\begin{rem}\label{R:eae_gcd}  Elaborating on these ideas, we obtain an alternative formula for $\eae(\cX,\cY)$, which will be useful later. Indeed,  for every Banach space~$\cX$, the ideal~$\mathbb{I}_\Phi(\cX)$ has a unique non-negative generator, which we shall denote by~$\gamma(\cX)$; in other words, $\gamma(\cX)\in\N_0$ is the unique number such that $\mathbb{I}_\Phi(\cX) = \gamma(\cX)\Z$.

  It follows from elementary number theory that, for every pair of Banach spaces~$\cX$ and~$\cY$, the ideal $\mathbb{I}_\Phi(\cX)\cap \mathbb{I}_\Phi(\cY) = \gamma(\cX)\Z\cap\gamma(\cY)\Z$ is generated by the lowest common multiple (lcm) of~$\gamma(\cX)$ and~$\gamma(\cY)$, provided that we set $\lcm(n,0)=\lcm(0,n)=0$ for every $n\in\Z$.  Combining this result with \Cref{C:IphiXcapIphiY}, we see that
  \begin{equation}\label{eq:eaeDefnAlt} \eae(\cX,\cY) = \lcm(\gamma(\cX),\gamma(\cY)).
  \end{equation}
\end{rem}

\begin{proof}[Proof of Proposition~{\normalfont{\ref{P:EAEfredholmIndex}}}]
 Conditions~\ref{P:EAEfredholmIndex2} and~\ref{P:EAEfredholmIndex3} are equivalent because
  \[ \bigl(\Phi_k(\cX)\neq\emptyset\ \text{and}\ \Phi_k(\cY)\neq\emptyset\bigr)\ \Longleftrightarrow\ k\in\mathbb{I}_\Phi(\cX)\cap \mathbb{I}_\Phi(\cY)\ \Longleftrightarrow\ k\in\eae(\cX,\cY)\Z,  \]
where the final bi-implication follows from \Cref{C:IphiXcapIphiY}.

We complete the proof by showing that conditions~\ref{P:EAEfredholmIndex2} and~\ref{P:EAEfredholmIndex1} are also equivalent.
  Here, the implication  \ref{P:EAEfredholmIndex1}$\Rightarrow$\ref{P:EAEfredholmIndex2} is obvious  because $\opEAE_k(\cX,\cY)\subseteq\Phi_k(\cX)\times \Phi_k(\cY)$ by definition.
  Conversely,  suppose  that $\Phi_k(\cX)$ and $\Phi_k(\cY)$ are both non-empty. If~$\cX$ or~$\cY$ is finite-dimensional,  then necessarily $k=0$, in which case $\opEAE_k(\cX,\cY)$ is non-empty because it contains $(I_\cX,I_\cY)$. Otherwise we may apply \Cref{perturbkernel1} to find operators $U\in\Phi_k(\cX)$ and $V\in\Phi_k(\cY)$ such that $\alpha(U)=\alpha(V)$. This implies that
  $\opEAE_k(\cX,\cY)$ is non-empty because it contains $(U,V)$ by~\eqref{eq:EAEalt}.
\end{proof}

\begin{rem}
  The index $\eae(\cX,\cY)\in\N_0$
  satisfies: \mbox{$\eae(\cX,\cY)=0$} if and only if $\Phi_k(\cX) = \emptyset$ for every $k\in\N$ or $\Phi_k(\cY) = \emptyset$ for every $k\in\N$.

Indeed, the implication $\Leftarrow$ is immediate from the definition of~$\eae(\cX,\cY)$.   Conversely, suppose that $\Phi_k(\cX)\ne\emptyset$ and $\Phi_m(\cY)\ne\emptyset$ for some \mbox{$k,m\in\N$}.  Then the Index Theorem implies that $\Phi_n(\cX)\ne\emptyset$ and $\Phi_n(\cY)\ne\emptyset$ for every  common multiple~$n\in\N$ of~$k$ and~$m$, so $\eae(\cX,\cY)\ge 1$. (Alternatively, this follows from~\eqref{eq:eaeDefnAlt} because $\lcm(\gamma(\cX),\gamma(\cY))=0$ if and only if $\gamma(\cX)=0$ or $\gamma(\cY)=0$.)
\end{rem}

\Cref{P:EAEfredholmIndex} naturally raises the question: What are the possible values of  the index $\eae(\cX,\cY)\in\N_0$? Obviously, $\eae(\cX,\cY) =0$ if~$\cX$ or~$\cY$ is finite-dimensional. Theorems~\ref{T:essinNew}\ref{T:essinNew2}, \ref{thm:improjEAE=SC}\ref{T:improjEAE=SCv1} and~\ref{T:improjUnited}\ref{T:improjV1}
show that all non-negative integers can be realized as $\eae(\cX,\cY)$  for infinite-dimensional Banach spaces~$\cX$ and~$\cY$ satisfying  various additional conditions. At this stage, let us use the index~$\gamma(\cX)$ from \Cref{R:eae_gcd} to verify that  all non-negative integers  can be realized as $\eae(\cX,\cX)$. Although this example may appear simpler than the three above-mentioned theorems,  ultimately they all rely on the same family of ``exotic'' Banach spaces constructed by Gowers and Maurey  in~\cite{gm2}.

 \begin{example}
   We claim that,  for every  $k_0\in\N_0$, there exists an infinite-dimensional Banach space~$\cX_{k_0}$ such that $\gamma(\cX_{k_0})=k_0$.  Consequently
  $\eae(\cX_{k_0},\cX_{k_0})=k_0$ by~\eqref{eq:eaeDefnAlt}, and  \Cref{P:EAEfredholmIndex} shows that
    ${\opEAE}_k(\cX_{k_0},\cX_{k_0})\neq \emptyset$  if and only if $k\in k_0\Z$.

  To verify this claim for $k_0=0$, we require an infinite-dimensional Banach space on which all Fredholm operators have index~$0$.  Gowers and Maurey  constructed such a Banach space in~\cite{gm1}. We shall encounter another space with this property in \Cref{T:GowersHyperplane}.

For $k_0=1$, any Banach space which is isomorphic to its hyperplanes satisfies the claim. Virtually every infinite-dimensional Banach space known prior to 1990 has this property.

Finally, to see that  the claim is true for every $k_0\ge 2$, we use a family of Banach spaces which Gowers and Maurey  constructed in~\cite[\S{}(4.3)]{gm2}.
These Banach spaces will play a key role in \Cref{sec:improj}, where \Cref{T:GMspace} summarizes their main properties; we refer to~\ref{T:GMspace4} for the particular result required at this point.
\end{example}

We conclude this section with two easy observations. The first is that the index~$\gamma(\cX)$, and therefore the associated quantity~$\eae(\cX,\cY)$, is an isomorphic invariant in the following precise sense.

\begin{lem}\label{L:isoInvarianceIphi}
  Let $\cX_1$ and $\cX_2$ be isomorphic Banach spaces. Then $\gamma(\cX_1) = \gamma(\cX_2)$.

  Consequently, if~$(\cY_1,\cY_2)$ is another pair of isomorphic Banach spaces, then \[ \eae(\cX_1,\cY_1)=\eae(\cX_2,\cY_2). \]
\end{lem}

\begin{proof} Let $R\in\sB(\cX_1,\cX_2)$ be an isomorphism. For each $k\in\mathbb{I}_\Phi(\cX_1)$, we can take $T\in\Phi_k(\cX_1)$. The Index Theorem implies that $RTR^{-1}\in\Phi_k(\cX_2)$, so \mbox{$k\in\mathbb{I}_\Phi(\cX_2)$}. This proves that $\mathbb{I}_\Phi(\cX_1)\subseteq\mathbb{I}_\Phi(\cX_2)$. The opposite inclusion follows by inter\-chang\-ing~$\cX_1$ and~$\cX_2$. Hence the ideals~$\mathbb{I}_\Phi(\cX_1)$ and~$\mathbb{I}_\Phi(\cX_2)$ are equal, so they must have the same non-negative generator; that is,  $\gamma(\cX_1) = \gamma(\cX_2)$.

  The final clause is an immediate consequence of~\eqref{eq:eaeDefnAlt}.
\end{proof}

Our second easy observation will be the key ingredient in the proof of the first part of \Cref{P:beyondprojincomp}, as we shall show immediately after it.

\begin{lem}\label{L:essincFredholmIndex} Let $\cX=\cX_1\oplus\cX_2$ be a Banach space.
\begin{enumerate}[label={\normalfont{(\roman*)}}]
\item\label{L:essincFredholmIndex1}  $\gamma(\cX)$  divides $\gcd(\gamma(\cX_1),\gamma(\cX_2))$.
\item\label{L:essincFredholmIndex2}    Suppose that~$\cX_1$ and~$\cX_2$  are essentially incomparable.  Then
  \begin{equation}\label{L:essincFredholmIndex:eq2} \gamma(\cX) = \gcd(\gamma(\cX_1),\gamma(\cX_2)).
  \end{equation}
\end{enumerate}
\end{lem}

\begin{proof} \ref{L:essincFredholmIndex1}. Suppose that $k_j\in\mathbb{I}_\Phi(\cX_j)$ for $j\in\{1,2\}$,  and take $T_j\in\Phi_{k_j}(\cX_j)$. Then we have
\[  \mat{T_1 & 0\\ 0 & T_2}\in\Phi_{k_1+k_2}(\cX), \]
so $k_1+k_2\in\mathbb{I}_\Phi(\cX)$. This shows that
\begin{equation}\label{L:essincFredholmIndex:eq}
\mathbb{I}_\Phi(\cX_1)+\mathbb{I}_\Phi(\cX_2)\subseteq\mathbb{I}_\Phi(\cX). \end{equation}
Recall that we have defined $\gcd(0,0)=0$. This ensures that the formula $m\Z+n\Z = \gcd(m,n)\Z$ holds true for all values of $m,n\in\Z$. Using it, we can rewrite the identity~\eqref{L:essincFredholmIndex:eq}  as $\gcd(\gamma(\cX_1),\gamma(\cX_2))\Z\subseteq \gamma(\cX)\Z$,  which proves~\ref{L:essincFredholmIndex1}.

\ref{L:essincFredholmIndex2}.
Suppose that the subspaces~$\cX_1$ and~$\cX_2$  are essentially incomparable. Then, for each \mbox{$k\in\mathbb{I}_\Phi(\cX)$}, we can take
  \[ T = \begin{bmatrix} T_{11} & T_{12}\\ T_{21} & T_{22} \end{bmatrix}\in\Phi_k(\cX), \]
  where $T_{12}$ and~$T_{21}$ are inessential. In view of \Cref{R:ss_iness}\ref{inessperturb}, this implies that
  \[ \Phi_k(\cX)\ni T - \begin{bmatrix} 0 & T_{12}\\ T_{21} & 0 \end{bmatrix} = \begin{bmatrix} T_{11} & 0\\ 0 & T_{22} \end{bmatrix}, \]
  which in turn means that $T_{11}\in\Phi(\cX_1)$ and $T_{22}\in\Phi(\cX_2)$ with \[ k=i(T_{11})+i(T_{22})\in\mathbb{I}_\Phi(\cX_1)+\mathbb{I}_\Phi(\cX_2). \]
 Hence    $\mathbb{I}_\Phi(\cX_1)+\mathbb{I}_\Phi(\cX_2) =\mathbb{I}_\Phi(\cX)$, and~\eqref{L:essincFredholmIndex:eq2} follows.
\end{proof}

\begin{proof}[Proof of  Equation~\eqref{nonPIeaeform}]\label{proofofEq:nonPIeaeform}
Since the pairs $(\cX_1,\cX_2)$ and $(\cY_1,\cY_2)$ are essentially incomparable,  \Cref{L:essincFredholmIndex}\ref{L:essincFredholmIndex2} implies that
  $\gamma(\cX) = \gcd(\gamma(\cX_1),\gamma(\cX_2))$ and  $\gamma(\cY) = \gcd(\gamma(\cY_1),\gamma(\cY_2))$,  where we observe that $\gamma(\cX_2)=\gamma(\cY_2) = \eae(\cX_2,\cY_2)$ because $\cX_2\cong \cY_2$.
Therefore, applying~\eqref{eq:eaeDefnAlt} twice and using the standard identity \[ \lcm(\gcd(a,c),\gcd(b,c))=\gcd(\lcm(a,b),c)\qquad (a,b,c\in\Z), \]
we obtain
\begin{align*} \eae(\cX,\cY) &= \lcm(\gcd(\gamma(\cX_1),\gamma(\cX_2)),  \gcd(\gamma(\cY_1),\gamma(\cX_2)))\\ &= \gcd(\lcm(\gamma(\cX_1),\gamma(\cY_1)),\gamma(\cX_2)) = \gcd(\eae(\cX_1,\cY_1),\eae(\cX_2,\cY_2)), \end{align*}
as required.
\end{proof}

\section{The key technical theorem and the deduction of \Cref{T:SCdichotomy} from it}\label{S:TechThm}
\noindent
The main aim of this section is to prove the following theorem, which will be the key tool in the remainder of our investigation, and is arguably the most important new insight in the paper, despite the somewhat technical nature of con\-di\-tions~\ref{T:SCfredholmChar1}--\ref{T:SCfredholmChar4}. Notably,  \Cref{T:SCdichotomy} is an easy consequence of it (using also \Cref{P:EAEfredholmIndex}), as we shall show at the end of this section.

\begin{thm}\label{T:SCfredholmChar}
  The following four conditions are equivalent for every  pair of Banach spaces $(\cX,\cY)$ and every $k\in\Z\colon$
\begin{enumerate}[label={\normalfont{(\roman*)}}]
\item\label{T:SCfredholmChar1}
There exist operators $S\in\sB(\cY,\cX)$ and $T\in\sB(\cX,\cY)$ such that $I_\cX - ST\in\Phi_k(\cX)$.
\item\label{T:SCfredholmChar4}
There exist operators $S\in\sB(\cY,\cX)$ and  $T\in\sB(\cX,\cY)$ such that $I_\cY - TS\in\Phi_k(\cY)$.
\item\label{T:SCfredholmChar2}
  $\opEAE_k(\cX,\cY) = \opSC_k(\cX,\cY)$, and this  set is non-empty.
\item\label{T:SCfredholmChar3}   $\opSC_k(\cX,\cY)\ne\emptyset$.
\end{enumerate}
\end{thm}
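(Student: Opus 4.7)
The plan is to close the short cycle $(iii) \Rightarrow (iv) \Rightarrow (i) \Leftrightarrow (ii)$ together with the substantive implication $(i) \Rightarrow (iii)$. The equivalence $(i) \Leftrightarrow (ii)$ is immediate from \Cref{L:IndexEqual}: if $I_\cX - ST$ (or $I_\cY - TS$) is Fredholm, its range is closed, and the lemma supplies matching kernel and cokernel dimensions for the other. The implication $(iii) \Rightarrow (iv)$ is built into the non-emptiness clause of $(iii)$. For $(iv) \Rightarrow (i)$, if $(U, V) \in \opSC_k(\cX, \cY)$ is realised as in~\eqref{SC}, then $A^{-1}U = I_\cX - (A^{-1}B)(D^{-1}C)$ lies in $\Phi_k(\cX)$ because $A$ is an isomorphism, so $S := A^{-1}B$ and $T := D^{-1}C$ witness $(i)$.

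Assume $(i)$ and fix $(S_0, T_0)$ with $I_\cX - S_0 T_0 \in \Phi_k(\cX)$. Taking $A = I_\cX$, $D = I_\cY$, $B = S_0$, $C = T_0$ in~\eqref{SC} places $(I_\cX - S_0 T_0,\, I_\cY - T_0 S_0)$ into $\opSC_k(\cX, \cY)$, which is therefore non-empty. Since the inclusion $\opSC_k(\cX, \cY) \subseteq \opEAE_k(\cX, \cY)$ is automatic, the remaining content of $(iii)$ is the reverse inclusion, which I plan to establish in three steps for a given $(U, V) \in \opEAE_k(\cX, \cY)$.

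\emph{Step A (adjusting $\alpha$).} Produce $(S, T)$ so that $U_1 := I_\cX - ST \in \Phi_k(\cX)$ with $\alpha(U_1) = \alpha(U)$; \Cref{L:IndexEqual} then automatically gives $V_1 := I_\cY - TS \in \Phi_k(\cY)$ with $\alpha(V_1) = \alpha(V)$. \emph{Step B (classical equivalence).} Since $U, U_1 \in \Phi_k(\cX)$ share the same $\alpha$ and the same $\beta = \alpha - k$, I would use first that the isomorphisms of $\cX$ act transitively on $n$-dimensional subspaces for every $n$ (by a Hahn--Banach induction on rank-one perturbations of the identity) to choose an isomorphism $B_0 \in \sB(\cX)$ with $B_0(\ker U_1) = \ker U$, so $U_1 B_0^{-1}$ has the same kernel as $U$; the open-mapping theorem then supplies a canonical Banach-space isomorphism $\ran U_1 = \ran(U_1 B_0^{-1}) \to \ran U$, and this extends to an isomorphism $A_0 \in \sB(\cX)$ by using any isomorphism between the $\beta$-dimensional complements of $\ran U_1$ and $\ran U$, yielding $U = A_0 U_1 B_0^{-1}$. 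An identical argument provides isomorphisms $C_0, D_0 \in \sB(\cY)$ with $V = C_0 V_1 D_0^{-1}$. \emph{Step C (assembly).} Set $A := A_0 B_0^{-1}$, $D := C_0 D_0^{-1}$, $B := A_0 S D_0^{-1}$, $C := C_0 T B_0^{-1}$. A direct cancellation of invertible factors gives $BD^{-1}C = A_0 S D_0^{-1}(C_0 D_0^{-1})^{-1} C_0 T B_0^{-1} = A_0 ST B_0^{-1}$, hence $A - BD^{-1}C = A_0(I_\cX - ST)B_0^{-1} = U$, and the analogous computation $CA^{-1}B = C_0 TS D_0^{-1}$ gives $D - CA^{-1}B = C_0(I_\cY - TS)D_0^{-1} = V$, placing $(U, V)$ in $\opSC_k(\cX, \cY)$.

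The main obstacle is Step~A. The natural route is to apply \Cref{perturbkernel1} to $I_\cX - S_0 T_0$ to obtain a finite-rank operator $R$ with $\alpha((I_\cX - S_0 T_0) + R) = \alpha(U)$, and then to realise $-R$ as $ST - S_0 T_0$ for some new pair $(S, T)$. The difficulty is that a one-sided perturbation $T := T_0 + \Delta_T$ produces $ST - S_0 T_0 = S_0 \Delta_T$, whose range is confined to $\ran S_0$ and need not contain $\ran R$. My plan is therefore to perturb $S_0$ and $T_0$ simultaneously by finite-rank operators $\Delta_S \colon \cY \to \cX$ and $\Delta_T \colon \cX \to \cY$, so that the induced change to $S_0 T_0$ is $S_0 \Delta_T + \Delta_S T_0 + \Delta_S \Delta_T$; this sum can have range contained in $\ran S_0 + \ran \Delta_S$, and since $(i)$ forces $\cX$ and (via \Cref{L:IndexEqual}) $\cY$ to be infinite-dimensional whenever $k \neq 0$, $\Delta_S$ has enough freedom to place $\ran \Delta_S$ wherever the finite-rank correction supplied by \Cref{perturbkernel1} demands, which should let us hit any target $\alpha \geq \max(k, 0)$.
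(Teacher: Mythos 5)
Your overall cycle structure is sound, and the implications $(iii)\Rightarrow(iv)$, $(iv)\Rightarrow(i)$, and $(i)\Leftrightarrow(ii)$ are proved essentially as in the paper (the paper packages $(iv)\Rightarrow(i)$ via Lemma~\ref{char:SC}, but your direct manipulation of~\eqref{SC} is equivalent). Your Steps~B and~C for $(i)\Rightarrow(iii)$ also work: the isomorphisms $A_0,B_0,C_0,D_0$ exist for the reasons you give, and the cancellation $A-BD^{-1}C = A_0(I_\cX-ST)B_0^{-1}=U$, $D-CA^{-1}B=C_0(I_\cY-TS)D_0^{-1}=V$ is correct. This is close to what the paper does after obtaining the right $(S,T)$.

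The genuine gap is Step~A, and you have in fact identified its difficulty correctly without resolving it. The task is to realise a prescribed finite-rank perturbation of $S_0T_0$ as a new product $ST$. Your proposed remedy --- a simultaneous finite-rank perturbation $S=S_0+\Delta_S$, $T=T_0+\Delta_T$ --- requires solving $S_0\Delta_T+\Delta_S(T_0+\Delta_T)=-R$ for a \emph{given} $R$, and there is no reason this quadratic constraint is solvable: the term $\Delta_S(T_0+\Delta_T)$ is supported on $\ran(T_0+\Delta_T)$, which may have infinite codimension in~$\cY$, and $S_0\Delta_T$ has range inside $\ran S_0$, which may likewise miss most of~$\cX$. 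Saying ``$\Delta_S$ has enough freedom'' controls the range of the correction, not its exact value, and an exact match is what is needed. The paper's Lemma~\ref{perturbkernel2} resolves this by a fundamentally \emph{non-perturbative} trick: since $\Phi_k(\cY)\ne\emptyset$ with $k\ne0$, one has $\cY\cong\cY\oplus\cW$ for any $n\lvert k\rvert$-dimensional~$\cW$, and via an isomorphism $L\colon\cY\to\cY\oplus\cW$ one sets $S_2=[\,S_1\ J\,]L$ and $T_2=L^{-1}\bigl[\begin{smallmatrix}T_1\\ R_0\end{smallmatrix}\bigr]$, giving $S_2T_2=S_1T_1+R$ exactly, even though $S_2-S_1$ and $T_2-T_1$ are \emph{not} finite rank. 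That isomorphism $\cY\cong\cY\oplus\cW$ (from \cite[Prop.~4.2]{tHMR20}) is the ingredient missing from your argument, and it is precisely what makes the lemma true.

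A second, smaller omission: your Step~A sketch invokes infinite-dimensionality ``whenever $k\ne0$'', so $k=0$ is not covered; and indeed Lemma~\ref{perturbkernel2} is false for $k=0$ (e.g.\ $\cX$ finite-dimensional, $(S_0,T_0)=(0,0)$). The paper handles $k=0$ separately by citing the known theorem that EAE and SC coincide for index-zero Fredholm operators \cite[Theorem~3]{BT92b}, and you should do the same.
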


The proof of \Cref{T:SCfredholmChar} involves two lemmas. The first of these reformulates SC in a way that is much closer to condition~\ref{T:SCfredholmChar1} above.

\begin{lem}\label{char:SC} Let $U\in\sB(\cX)$ and $V\in\sB(\cY)$ for some Banach spaces~$\cX$ and~$\cY$. Then~$U$ and~$V$  are SC if and only if there are   isomorphisms $M\in\sB(\cX)$ and $N\in\sB(\cY)$ and operators $S\in\sB(\cY,\cX)$ and $T\in\sB(\cX,\cY)$  such that
 \begin{equation}\label{char:SC:eq}  UM = I_\cX - ST\qquad\text{and}\qquad VN =  I_\cY - TS. \end{equation}
\end{lem}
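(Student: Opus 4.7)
My plan is to prove the lemma by direct algebraic manipulation, since both sides of the equivalence are rearrangements of essentially the same identity once one reads off which operators should play the roles of $A, B, C, D$ versus $M, N, S, T$.

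For the forward implication, I would assume~$U$ and~$V$ are SC via isomorphisms $A\in\sB(\cX)$, $D\in\sB(\cY)$ and operators $B\in\sB(\cY,\cX)$, $C\in\sB(\cX,\cY)$ satisfying~\eqref{SC}. Multiplying the first equation in~\eqref{SC} by $A^{-1}$ on the right gives
\[ UA^{-1} = I_\cX - BD^{-1}CA^{-1}, \]
and similarly multiplying the second by $D^{-1}$ on the right gives $VD^{-1} = I_\cY - CA^{-1}BD^{-1}$. Setting $M = A^{-1}$, $N = D^{-1}$, $S = BD^{-1}\in\sB(\cY,\cX)$ and $T = CA^{-1}\in\sB(\cX,\cY)$, I then verify that $ST = BD^{-1}CA^{-1}$ and $TS = CA^{-1}BD^{-1}$, so both equations in~\eqref{char:SC:eq} hold with~$M$ and~$N$ obviously isomorphisms.

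For the converse, I would reverse the construction: given $M, N, S, T$ with~$M$ and~$N$ isomorphisms and~\eqref{char:SC:eq} holding, set $A = M^{-1}$, $D = N^{-1}$, $B = SN^{-1}\in\sB(\cY,\cX)$ and $C = TM^{-1}\in\sB(\cX,\cY)$. Then $A$ and $D$ are isomorphisms and one computes
\[ A - BD^{-1}C = M^{-1} - SN^{-1}\cdot N\cdot TM^{-1} = (I_\cX - ST)M^{-1} = UM\cdot M^{-1} = U, \]
and symmetrically
\[ D - CA^{-1}B = N^{-1} - TM^{-1}\cdot M\cdot SN^{-1} = (I_\cY - TS)N^{-1} = VN\cdot N^{-1} = V, \]
yielding the required SC factorization~\eqref{SC}.

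There is no real obstacle here: the whole proof is just reading the substitutions $(A,B,C,D)\leftrightarrow(M^{-1}, SN^{-1}, TM^{-1}, N^{-1})$ off the definition of SC, and the only thing to keep track of is that the isomorphism property is preserved under inversion in both directions. The lemma is essentially a change of notation adapted to the form $I - ST$ that will match the statement of \Cref{T:SCfredholmChar}\ref{T:SCfredholmChar1}.
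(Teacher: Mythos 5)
Your proposal is correct and uses exactly the same substitution as the paper: $M=A^{-1}$, $N=D^{-1}$, $S=BD^{-1}$, $T=CA^{-1}$ in the forward direction, and $A=M^{-1}$, $B=SN^{-1}$, $C=TM^{-1}$, $D=N^{-1}$ in the converse. The paper merely states the substitutions and calls the verification straightforward, while you have written out the algebra; the content is identical.
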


\begin{proof} This is a straightforward verification. On the one hand, if the operators $A$, $B$, $C$ and~$D$ satisfy~\eqref{SC}, then $M= A^{-1}$, $N = D^{-1}$, $S= BD^{-1}$ and $T=CA^{-1}$ satisfy~\eqref{char:SC:eq}, and on the other, if $M$, $N$, $S$ and~$T$ satisfy~\eqref{char:SC:eq}, then $A=M^{-1}$, $B=SN^{-1}$, $C=TM^{-1}$ and $D=N^{-1}$ satisfy~\eqref{SC}.
\end{proof}

The  second lemma can be viewed as a technical refinement of \Cref{perturbkernel1}. Its  proof is an adaption of the proof of \cite[Lemma~5.10]{tHMR20}.

\begin{lem}\label{perturbkernel2}
  Let $\cX$ and $\cY$ be Banach spaces, and  suppose that  $I_\cX-S_1T_1\in\Phi_k(\cX)$ for  some $k\in\Z\setminus\{0\}$ and some operators $S_1\in\sB(\cY,\cX)$ and $T_1\in\sB(\cX,\cY)$. Then, for every
  $m\in\N_0\cap[k,\infty)$, there are operators $S_2\in\sB(\cY,\cX)$
    and $T_2\in\sB(\cX,\cY)$ such that $S_1T_1-S_2T_2$ is a
    finite-rank operator and $I_\cX -S_2T_2\in\Phi_k(\cX)$ with $\alpha(I_\cX -S_2T_2)=m$.
\end{lem}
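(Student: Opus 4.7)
The plan is to keep $T_2 = T_1$ throughout and to build $S_2$ by a sequence of rank-one updates of the form $S \mapsto S + u \otimes \phi$ with $u \in \cX$ and $\phi \in \cY^*$. Each such update produces the rank-one perturbation
\[ I_\cX - (S + u \otimes \phi) T_1 = (I_\cX - S T_1) - u \otimes (\phi \circ T_1), \]
which remains Fredholm of index $k$, and after finitely many steps the cumulative difference $(S_2 - S_1) T_1 = S_2 T_2 - S_1 T_1$ will automatically be of finite rank.

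Before starting I would record two structural observations. Since $k \neq 0$, the space $\cX$ must be infinite-dimensional (Fredholm operators on finite-dimensional spaces have index zero), and \Cref{L:IndexEqual} applied to $I_\cY - T_1 S_1$ shows that $\cY$ is infinite-dimensional as well. Moreover, $\ker T_1$ must have infinite codimension in $\cX$: otherwise $\ran T_1$ would be finite-dimensional, making $S_1 T_1$ finite-rank and $U_1 := I_\cX - S_1 T_1$ a finite-rank perturbation of $I_\cX$, contradicting $k \neq 0$.

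If $m = \alpha(U_1)$, take $(S_2, T_2) = (S_1, T_1)$. If $m > \alpha(U_1)$, I would enlarge the kernel by one dimension per step as follows. Given a current operator $I_\cX - S T_1 \in \Phi_k(\cX)$ with kernel $\cK$ of dimension $< m$, the restriction $T_1|_\cK$ is injective (any $v \in \cK \cap \ker T_1$ satisfies $v = S T_1 v = 0$), so $T_1(\cK)$ is finite-dimensional. Since $\cK + \ker T_1$ has infinite codimension in $\cX$, I can pick $w \notin \cK + \ker T_1$, ensuring $T_1 w \notin T_1(\cK)$, and then choose $\phi \in \cY^*$ with $\phi(T_1 w) = 1$ and $\phi|_{T_1(\cK)} = 0$. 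Setting $u = (I_\cX - S T_1) w$, which is nonzero because $w \notin \cK$, a direct computation confirms that the updated operator has kernel $\cK + \mathbb{K} w$, of dimension exactly one larger.

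If $m < \alpha(U_1)$, I would shrink the kernel one dimension at a time: for the current operator with kernel $\cK$ of dimension $> m$, the identity $\beta(I_\cX - S T_1) = \alpha(I_\cX - S T_1) - k \ge 1$ provides $u \in \cX \setminus \ran(I_\cX - S T_1)$, and $T_1(\cK) \ne 0$ allows choosing $\phi \in \cY^*$ nonvanishing on $T_1(\cK)$; the resulting update has kernel $\cK \cap \ker(\phi \circ T_1)$, of dimension one smaller. Iterating in the appropriate direction $|m - \alpha(U_1)|$ times yields the required $S_2$. The main obstacle will be verifying that each rank-one update shifts the kernel dimension by exactly $\pm 1$ as claimed; this comes down to a direct computation that crucially exploits the engineered conditions $\phi(T_1 w) = 1$ in the enlarging case, and $u \notin \ran(I_\cX - S T_1)$ together with $\phi|_{T_1(\cK)} \ne 0$ in the shrinking case.
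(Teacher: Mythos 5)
Your argument is correct, and it takes a genuinely different route from the paper's. The paper first applies \Cref{perturbkernel1} to obtain a finite-rank operator $R\in\sB(\cX)$ with $\alpha(I_\cX-S_1T_1-R)=m$, and then ``routes'' this correction through $\cY$: it enlarges $\cW\supseteq\ran R$ to dimension $n\lvert k\rvert$, invokes \cite[Proposition~4.2]{tHMR20} to get $\cY\cong\cY\oplus\cW$ (this is where the hypothesis $k\ne0$ does its real work in the paper), and uses the resulting isomorphism to rewrite $S_1T_1+R$ as $S_2T_2$. Your approach instead keeps $T_2=T_1$ and builds $S_2$ from $S_1$ by a finite chain of rank-one updates $S\mapsto S+u\otimes\phi$, each engineered to change $\alpha(I_\cX-ST_1)$ by exactly $\pm1$; the verification that $\ker(A-u\otimes\psi)$ equals $\cK\oplus\mathbb{K}w$ (enlarging case, using $\psi(w)=1$ and $\psi|_\cK=0$) or $\cK\cap\ker\psi$ (shrinking case, using $u\notin\ran A$ and $\psi|_\cK\ne0$) is a short direct computation, and the finiteness of $S_1T_1-S_2T_2=(S_1-S_2)T_1$ is automatic since $S_1-S_2$ has finite rank. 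What your proof buys is that it is self-contained and does not rely on \cite[Proposition~4.2]{tHMR20} (nor, in fact, on \Cref{perturbkernel1} itself, which becomes a special case of your construction); it also makes transparent exactly where $k\ne0$ is used, namely in the two structural observations that $\cX$ is infinite-dimensional and that $\ker T_1$ has infinite codimension, both of which are needed to keep the enlarging step running. The paper's proof is shorter on the page precisely because it offloads the combinatorial work to the two cited results.
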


\begin{proof}
We begin by observing that~$\cX$ must be infinite-dimensional because it admits a Fredholm operator of non-zero index. We can therefore apply \Cref{perturbkernel1} to find a finite-rank operator $R\in\sB(\cX)$ such that
\begin{equation}\label{PertAlpha}
\alpha(I_\cX -S_1T_1-R)=m.
\end{equation}
Take a finite-dimensional subspace~$\cW$ of~$\cX$ such that  $\ran R\subseteq\cW$ and $\dim\cW=n\lvert k\rvert$ for some $n\in\N$, and let $R_0\in\sB(\cX,\cW)$  denote the operator~$R$ regarded as a map into~$\cW$. Moreover, let  $J\in\sB(\cW,\cX)$ be the inclusion map.

\Cref{L:IndexEqual} shows that $I_\cY-T_1S_1\in\Phi_k(\cY)$. This implies that $\Phi_{n|k|}(\cY)$ is non-empty by \Cref{L:indexideal}, and therefore $\cY\cong \cY\oplus\cW$ by \cite[Proposition~4.2]{tHMR20}. Take an isomorphism  $L\in\sB(\cY,\cY\oplus\cW)$. Then the operators
\[ S_2 = \begin{bmatrix} S_1 & J\end{bmatrix}L\in\sB(\cY,\cX)\qquad\text{and}\qquad T_2 = L^{-1}\begin{bmatrix} T_1\\ R_0\end{bmatrix}\in\sB(\cX,\cY) \]
 satisfy $S_2T_2 = S_1T_1 +R$. It follows that $S_1T_1-S_2T_2=-R$ is a finite-rank operator, and $I_\cX -S_2T_2\in\Phi_k(\cX)$ because finite-rank perturbations do not change the Fredholm index. Finally, \eqref{PertAlpha} shows that $\alpha(I_\cX -S_2T_2)=m$.
\end{proof}

\begin{proof}[Proof of Theorem {\normalfont{\ref{T:SCfredholmChar}}}]
\Cref{L:IndexEqual} shows that conditions~\ref{T:SCfredholmChar1} and~\ref{T:SCfredholmChar4} are equivalent.

\ref{T:SCfredholmChar1}$\Rightarrow$\ref{T:SCfredholmChar2}. For $k=0$, \eqref{BGKR:eq} shows that  $\opEAE_0(\cX,\cY) = \SC_0(\cX,\cY)$, and this set is non-empty because  it contains $(I_\cX,I_\cY)$.

 Hence it suffices to consider the case $k\ne 0$.
  Suppose that  $I_\cX - S_1T_1\in\Phi_k(\cX)$ for some operators $S_1\in\sB(\cY,\cX)$ and $T_1\in\sB(\cX,\cY)$. Then obviously $\Phi_k(\cX)$ is non-empty, and $\Phi_k(\cY)$ is also non-empty by \Cref{L:IndexEqual}, so $\opEAE_k(\cX,\cY)$  is non-empty by \Cref{P:EAEfredholmIndex}.

  Suppose that  $(U,V)\in\opEAE_k(\cX,\cY)$, so that $\alpha(U)=\alpha(V)$ by \eqref{eq:EAEalt}. Call this number~$m$, and note that $m\ge k$. Our strategy is to modify the operators~$S_1$ and~$T_1$ to obtain a pair for which we can construct isomorphisms~$M\in\sB(\cX)$ and~$N\in\sB(\cY)$ such that~\eqref{char:SC:eq} is satisfied.

  We begin by  applying \Cref{perturbkernel2} to find operators  $S_2\in\sB(\cY,\cX)$ and $T_2\in\sB(\cX,\cY)$ such that $S_1T_1-S_2T_2$ has finite rank and $I_\cX - S_2T_2\in\Phi_k(\cX)$ with $\alpha(I_\cX - S_2T_2)=m$. Then
$\beta(I_\cX - S_2T_2)=m-k=\beta(U)$, so $\ran (I_\cX-S_2T_2)$ and  $\ran U$ are closed subspaces of the same finite codimension in~$\cX$, and therefore we can
take  an isomorphism
$A\in\sB(\cX)$ such that
\begin{equation}\label{T:SCfredholmChar:eq2}
A[\ran (I_\cX-S_2T_2)] =  \ran U.
\end{equation}
\Cref{L:IndexEqual} implies that $\beta(I_\cY - T_2S_2)=\beta(I_\cX - S_2T_2)=m-k=\beta(V)$, so we can also find an isomorphism $B\in\sB(\cY)$ such that
\begin{equation}\label{T:SCfredholmChar:eq1}
B[\ran(I_\cY-T_2S_2)]=\ran V.
\end{equation}

Set $S_3 = AS_2B^{-1}\in\sB(\cY,\cX)$ and $T_3 = BT_2A^{-1}\in\sB(\cX,\cY)$, and observe that these operators satisfy
\begin{equation}\label{T:SCfredholmChar:eq3}  I_\cX-S_3T_3 = A(I_\cX-S_2T_2)A^{-1}. \end{equation}
This implies that $\alpha(I_\cX-S_3T_3)=\alpha(I_\cX-S_2T_2)=m=\alpha(U)$, which is finite, so we can take an isomorphism $M_1\in\sB(\ker(I_\cX-S_3T_3),\ker U)$.
Choose closed subspaces~$\cX_1$ and~$\cX_2$ of $\cX$ such that $\cX =  \ker(I_\cX-S_3T_3)\oplus \cX_1$ and $\cX= \ker U\oplus \cX_2$, and let
\[ R\colon x\mapsto (I_\cX-S_3T_3)x,\ \cX_1\to \ran (I_\cX-S_3T_3),\quad\text{and}\quad U_0\colon x\mapsto Ux,\ \cX_2\to \ran  U, \]
be the restrictions of $I_\cX-S_3T_3$ and~$U$, respectively.  The choices of $\cX_1$ and $\cX_2$ imply that $R$ and $U_0$ are isomorphisms.

Using~\eqref{T:SCfredholmChar:eq3} and~\eqref{T:SCfredholmChar:eq2}, we see that $\ran (I_\cX-S_3T_3) =  \ran  U$, so we can define an isomorphism $M_2\in\sB(\cX_1,\cX_2)$ by $M_2 = U_0^{-1}R$, and therefore
\[ M = \begin{bmatrix} M_1 & 0\\ 0 & M_2\end{bmatrix}\colon \cX=  \ker(I_\cX-S_3T_3)\oplus \cX_1\to \ker U\oplus \cX_2 =\cX \]
is an isomorphism. For $x\in\ker(I_\cX-S_3T_3)$, we have $Mx\in\ker U$, so $UMx = 0 = (I_\cX-S_3T_3)x$; and for $x\in\cX_1$, we have $UMx= UU_0^{-1}Rx = (I_\cX-S_3T_3)x$. This shows that $UM = I_\cX -S_3T_3$ because $\cX =  \ker(I_\cX-T_3S_3) + \cX_1$.

\Cref{L:IndexEqual} implies that $\alpha(I_\cY-T_3S_3) = \alpha(I_\cX-S_3T_3) = m=\alpha(V)$, and
combining the identity
\[  I_\cY-T_3S_3 = B(I_\cY-T_2S_2)B^{-1} \]
with~\eqref{T:SCfredholmChar:eq1},
we deduce  that $\ran  (I_\cY-T_3S_3) =  \ran  V$.
Therefore we can repeat the constructions from the previous paragraphs to obtain an isomorphism $N\in\sB(\cY)$ such that $VN = I_\cY -T_3S_3$. Now the conclusion  that  $U$ and $V$ are SC follows from \Cref{char:SC}.

The implication \ref{T:SCfredholmChar2}$\Rightarrow$\ref{T:SCfredholmChar3} is clear.

\ref{T:SCfredholmChar3}$\Rightarrow$\ref{T:SCfredholmChar1}.  Suppose that $(U,V)\in\opSC_k(\cX,\cY)$. Then \Cref{char:SC} implies that there are operators $S\in\sB(\cY,\cX)$ and $T\in\sB(\cX,\cY)$  and  an isomorphism $M\in\sB(\cX)$ such that $UM=I_\cX-ST$. We have  $UM\in\Phi_k(\cX)$ because $U\in\Phi_k(\cX)$ and $M$ is an isomorphism, and consequently~\ref{T:SCfredholmChar1} is satisfied.
\end{proof}

\begin{proof}[Proof of Theorem~{\normalfont{\ref{T:SCdichotomy}}}]
\ref{T:SCdichotomy0} is simply a restatement of the equivalence of condi\-tions \ref{P:EAEfredholmIndex1} and~\ref{P:EAEfredholmIndex3} in \Cref{P:EAEfredholmIndex}.

  \ref{T:SCdichotomy2}. Take $k\in\eae(\cX,\cY)\Z$. By \ref{T:SCdichotomy0}, we have $\opEAE_k(\cX,\cY)\ne\emptyset$. In view of this, the implication~$\Rightarrow$ is clear, while the converse follows from \Cref{T:SCfredholmChar} (specifically, the implication \ref{T:SCfredholmChar3}$\Rightarrow$\ref{T:SCfredholmChar2}).

  \ref{T:SCdichotomy1}. \Cref{P:EAEfredholmIndex} shows that $\Phi_k(\cX)=\emptyset$ or $\Phi_k(\cY)=\emptyset$ for $k\in\Z\setminus\eae(\cX,\cY)\Z$. Since
\[ \opSC_k(\cX,\cY)\subseteq\opEAE_k(\cX,\cY)\subseteq\Phi_k(\cX)\times\Phi_k(\cY), \] we see  that $\opSC_k(\cX,\cY)=\opEAE_k(\cX,\cY)=\emptyset$ in this case.
\end{proof}

We conclude this section by showing how \Cref{T:SCfredholmChar} can be deduced from results obtained in~\cite{tHMR20}. To this end, take $(U,V)\in\opEAE_k(\cX,\cY)$. Translating the conclusion of \cite[Proposition~5.9]{tHMR20} about what is called ``the Banach space operator problem'' in~\cite{tHMR20} to the setting of EAE and SC, as explained in \cite[Section~3]{tHMR20}, we see that~$U$ and~$V$ are SC if and only if
there exist operators $B_1\in\sB(\ran U, \ran V)$ and $B_2\in\sB(\ran V, \ran U)$ such that
  \begin{equation}\label{Proof:alt}  I_{\ran V}- B_1B_2\in\Phi_k(\ran V).
\end{equation}
\begin{proof}[Alternative proof of Theorem~{\normalfont{\ref{T:SCfredholmChar}}}]
  As before, \Cref{L:IndexEqual} shows that conditions~\ref{T:SCfredholmChar1} and~\ref{T:SCfredholmChar4} are equivalent, and the implication \ref{T:SCfredholmChar2}$\Rightarrow$\ref{T:SCfredholmChar3} is trivial.

  \ref{T:SCfredholmChar4}$\Rightarrow$\ref{T:SCfredholmChar2}.  Suppose that~\ref{T:SCfredholmChar4} is satisfied, so that $I_\cY-TS\in\Phi_k(\cY)$ for some operators $S\in\sB(\cY,\cX)$ and $T\in\sB(\cX,\cY)$,   and take $(U,V)\in\opEAE_k(\cX,\cY)$. We must show that $(U,V)\in\opSC_k(\cX,\cY)$, which by the result from~\cite{tHMR20} stated above amounts to finding operators $B_1\in\sB(\ran U, \ran V)$ and $B_2\in\sB(\ran V, \ran U)$ which satisfy~\eqref{Proof:alt}.

  Take finite-dimensional subspaces~$\cX_1$ and~$\cY_1$ of~$\cX$ and~$\cY$, respectively, such that
\begin{equation}\label{eq:AltPf} \cX=\ran U \oplus \cX_1 \ands \cY=\ran V \oplus \cY_1, \end{equation} and decompose the operators~$S$ and~$T$ accordingly; that is,
\[
S=\mat{S_{11}&S_{12}\\ S_{21}&S_{22}}\quad\text{and}\quad
T=\mat{T_{11}&T_{12}\\ T_{21}&T_{22}}, \]
where $S_{11}\in\sB(\ran V,\ran U)$, $S_{12}\in\sB(\cY_1,\ran U)$, $S_{21}\in\sB(\ran V,\cX_1)$, $S_{22}\in\sB(\cY_1,\cX_1)$, $T_{11}\in\sB(\ran U,\ran V)$, $T_{12}\in\sB(\cX_1,\ran V)$, $T_{21}\in\sB(\ran U,\cY_1)$ and $T_{22}\in\sB(\cX_1,\cY_1)$.
Define
\[ S_1 = \mat{S_{11}&0\\ 0&0}\in\sB(\cY,\cX)\ands T_1  = \mat{T_{11}&0\\ 0&0}\in\sB(\cX,\cY). \]
Since $\cX_1$ and $\cY_1$ are finite-dimensional, the operators $S_2= S-S_1$ and $T_2= T-T_1$ have finite rank, and
\[
\Phi_k(\cY)\ni I_\cY -TS = I_\cY-T_1S_1 -(T_1S_2+T_2S_1+T_2S_2),
\]
where $T_1S_2+T_2S_1+T_2S_2$ is a finite-rank operator. Hence
\[ \Phi_k(\cY)\ni I_\cY-T_1S_1 =
\mat{I_{\ran V}-T_{11}S_{11}&0\\0& I_{\cY_1}},
\]
which in turn implies that $I_{\ran V}-T_{11}S_{11}\in\Phi_k(\ran V)$. Therefore the operators  $B_1 = T_{11}$ and $B_2 = S_{11}$  satisfy~\eqref{Proof:alt}.

\ref{T:SCfredholmChar3}$\Rightarrow$\ref{T:SCfredholmChar4}. Suppose that $\opSC_k(\cX,\cY)\ne\emptyset$, and take $(U,V)\in\opSC_k(\cX,\cY)$. Then, by the result from~\cite{tHMR20} stated above, we can find operators $B_1\in\sB(\ran U, \ran V)$ and $B_2\in\sB(\ran V, \ran U)$ which satisfy~\eqref{Proof:alt}. As before, take finite-dimensional subspaces~$\cX_1$ and~$\cY_1$ of~$\cX$ and~$\cY$, respectively, such that~\eqref{eq:AltPf}  is satisfied, and define
\[ S=\mat{B_2&0\\0&0}\colon \cY=\ran V\oplus\cY_1\to \ran U\oplus\cX_1=\cX \]
and
\[ T=\mat{B_1&0\\0&0}\colon\cX=\ran U\oplus\cX_1\to\ran V\oplus\cY_1=\cY. \]
Then
\[
I_{\cY}-TS=\mat{I_{\ran V}-B_1B_2 & 0\\ 0 & I_{\cY_1}} \in \Phi_k(\cY),
\]
which shows that~\ref{T:SCfredholmChar4} is satisfied.
\end{proof}

\section{Non-emptiness of $\opSC_k(\cX,\cY)$ and the proofs of Propositions~\ref{P:SC1806} and~\ref{P:beyondprojincomp}}\label{S:SC}
\noindent
The aim of this section is to investigate the set of integers~$k$ for which there \mbox{exist} Schur-coupled operators $U\in\Phi_k(\cX)$ and $V\in\Phi_k(\cY)$; that is, $\opSC_k(\cX,\cY)\ne\emptyset$.
We follow a similar strategy to the one successfully employed in \Cref{S:IntroProofs}, beginning with a partial analogue of  \Cref{L:indexideal} for the set $\mathbb{I}_{\text{SC}}(\cX,\cY) = \{ k\in\Z : \opSC_k(\cX,\cY)\ne\emptyset\}$ defined in~\eqref{eq:Isc}.
As we shall see, the situation for SC  is considerably more complicated than for EAE, primarily due to the difficulty of analyzing the technical conditions~\ref{T:SCfredholmChar1}--\ref{T:SCfredholmChar4} in \Cref{T:SCfredholmChar}.
In particular,
we have been unable to obtain an exact counterpart of  \Cref{L:indexideal} for SC because we do not know if the set $\mathbb{I}_{\text{SC}}(\cX,\cY)$ is always closed under addition.

\begin{lem}\label{L:SC}
  The set $\mathbb{I}_{\normalfont{\text{SC}}}(\cX,\cY)$ has the following properties   for every pair of Banach spaces~$(\cX,\cY)\colon$
\begin{enumerate}[label={\normalfont{(\roman*)}}]
\item\label{L:SC1} $0\in\mathbb{I}_{\normalfont{\text{SC}}}(\cX,\cY)$.

\item\label{L:SC3} $\operatorname{sc}(\cX,\cY)\in\mathbb{I}_{\normalfont{\text{SC}}}(\cX,\cY)$.
\item\label{L:SC2} $km\in\mathbb{I}_{\normalfont{\text{SC}}}(\cX,\cY)$ for every $k\in\mathbb{I}_{\normalfont{\text{SC}}}(\cX,\cY)$ and $m\in\Z$.

\item\label{L:SC5}
Suppose that
$\operatorname{sc}(\cX,\cY)\in\{0,\eae(\cX,\cY)\}$.
Then $\mathbb{I}_{\normalfont{\text{SC}}}(\cX,\cY) = \operatorname{sc}(\cX,\cY)\Z$.
\end{enumerate}
\end{lem}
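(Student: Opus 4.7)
The plan is to reduce every claim to the existence of operators $S'\in\sB(\cY,\cX)$ and $T'\in\sB(\cX,\cY)$ with $I_\cX - S'T'\in\Phi_k(\cX)$, since by \Cref{T:SCfredholmChar} this is exactly the condition for $k\in\mathbb{I}_{\normalfont{\text{SC}}}(\cX,\cY)$. Parts \ref{L:SC1} and \ref{L:SC3} are essentially immediate from this vantage point: for \ref{L:SC1} take $S'=T'=0$, giving $I_\cX-S'T'=I_\cX\in\Phi_0(\cX)$ (equivalently, $(I_\cX,I_\cY)\in\opSC_0(\cX,\cY)$ via $A=I_\cX$, $D=I_\cY$, $B=C=0$); for \ref{L:SC3} the definition \eqref{eq:sc} yields either $\operatorname{sc}(\cX,\cY)=0$, covered by \ref{L:SC1}, or $\operatorname{sc}(\cX,\cY)$ as the minimum of $\mathbb{I}_{\normalfont{\text{SC}}}(\cX,\cY)\cap\N$, hence a member of $\mathbb{I}_{\normalfont{\text{SC}}}(\cX,\cY)$.

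For \ref{L:SC2}, the cases $k=0$ or $m=0$ collapse to \ref{L:SC1}, so I would assume $k\ne 0$ and $m\ne 0$ and fix $S\in\sB(\cY,\cX)$, $T\in\sB(\cX,\cY)$ with $I_\cX-ST\in\Phi_k(\cX)$, as supplied by \Cref{T:SCfredholmChar}. When $m>0$, the key identity is
\[
(I_\cX-ST)^m = I_\cX - S\,Q_m(TS)\,T,\qquad Q_m(x)=\sum_{j=1}^{m}\binom{m}{j}(-1)^{j-1}x^{j-1},
\]
obtained by expanding the binomial and invoking $(ST)^j=S(TS)^{j-1}T$ for $j\geq 1$; setting $S'=S$ and $T'=Q_m(TS)T$, the Index Theorem yields $I_\cX-S'T'\in\Phi_{mk}(\cX)$, so $mk\in\mathbb{I}_{\normalfont{\text{SC}}}(\cX,\cY)$.

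The step I expect to be the main obstacle is the negation $k\mapsto -k$, since $I_\cX-ST$ is not invertible in general. My plan for this step is first to apply \Cref{perturbkernel2} to replace $(S,T)$ by a new pair (still written $(S,T)$) for which $I_\cX-ST\in\Phi_k(\cX)$ additionally satisfies $\alpha(I_\cX-ST)=\max(k,0)$; this renders $I_\cX-ST$ surjective when $k>0$ and injective when $k<0$. In the case $k>0$, a closed complement of the finite-dimensional kernel yields a bounded right inverse $W\in\sB(\cX)$ with $(I_\cX-ST)W=I_\cX$, which a direct count places in $\Phi_{-k}(\cX)$; rearranging $(I_\cX-ST)W=I_\cX$ to $I_\cX-W=-S(TW)$ shows $W=I_\cX-S'T'$ with $S'=-S$, $T'=TW$. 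The case $k<0$ is dual: a closed complement of $\ran(I_\cX-ST)$ supplies a bounded left inverse $W\in\Phi_{-k}(\cX)$ with $W(I_\cX-ST)=I_\cX$, which rearranges to $I_\cX-W=-(WS)T$, so that $W=I_\cX-S'T'$ with $S'=-WS$, $T'=T$. Hence $-k\in\mathbb{I}_{\normalfont{\text{SC}}}(\cX,\cY)$, and applying the $m>0$ case to $-k$ produces $mk\in\mathbb{I}_{\normalfont{\text{SC}}}(\cX,\cY)$ for every negative $m$, completing \ref{L:SC2}.

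Finally, \ref{L:SC5} follows from the previous parts together with $\opSC_k(\cX,\cY)\subseteq\opEAE_k(\cX,\cY)$ and \Cref{P:EAEfredholmIndex}: the inclusion $\operatorname{sc}(\cX,\cY)\Z\subseteq\mathbb{I}_{\normalfont{\text{SC}}}(\cX,\cY)$ comes from \ref{L:SC3} and \ref{L:SC2}, while $\mathbb{I}_{\normalfont{\text{SC}}}(\cX,\cY)\subseteq\eae(\cX,\cY)\Z$ comes from the trivial inclusion and \Cref{P:EAEfredholmIndex}. Under the hypothesis $\operatorname{sc}(\cX,\cY)=\eae(\cX,\cY)$ these sandwiching ideals of $\Z$ coincide and pin down $\mathbb{I}_{\normalfont{\text{SC}}}(\cX,\cY)=\operatorname{sc}(\cX,\cY)\Z$; the remaining case $\operatorname{sc}(\cX,\cY)=0$ is handled by combining the defining condition $\mathbb{I}_{\normalfont{\text{SC}}}(\cX,\cY)\cap\N=\emptyset$ with the negation closure from \ref{L:SC2} to conclude $\mathbb{I}_{\normalfont{\text{SC}}}(\cX,\cY)=\{0\}=\operatorname{sc}(\cX,\cY)\Z$.
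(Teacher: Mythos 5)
Your proposal is correct and follows essentially the same route as the paper: parts \ref{L:SC1} and \ref{L:SC3} by direct inspection, part \ref{L:SC2} via \Cref{T:SCfredholmChar} using the binomial identity for positive $m$ and \Cref{perturbkernel2} plus a one-sided inverse to handle $m=-1$ (then composing), and part \ref{L:SC5} by sandwiching with \Cref{C:IphiXcapIphiY}. The only cosmetic difference is that you write the $m>0$ factorization as $S\,Q_m(TS)\,T$ rather than the paper's $S_1T_1\sum_{j}\binom{m}{j}(-S_1T_1)^{j-1}$, but these define the same operators.
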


\begin{proof} The first two properties are  easy to verify. Indeed, \ref{L:SC1}~follows from the fact that $(I_\cX,I_\cY)\in\opSC_0(\cX,\cY)$, while~\ref{L:SC3} follows from~\ref{L:SC1} if
  $\operatorname{sc}(\cX,\cY)=0$, and otherwise from the definition~\eqref{eq:sc} of  $\operatorname{sc}(\cX,\cY)$.

However, the proof of~\ref{L:SC2} requires more work.
  Take $k\in\mathbb{I}_{\normalfont{\text{SC}}}(\cX,\cY)$ and  $m\in\Z$. By~\ref{L:SC1}, we may suppose that  $k\ne 0$ and $m\ne 0$. Since $\opSC_k(\cX,\cY)\ne\emptyset$, \Cref{T:SCfredholmChar} implies that we can find operators $S_1\in\sB(\cY,\cX)$ and $T_1\in\sB(\cX,\cY)$ such that
  $I_\cX-S_1T_1\in\Phi_k(\cX)$.
  We claim that there exist  operators $S_m\in\sB(\cY,\cX)$ and $T_m\in\sB(\cX,\cY)$ such that
  \begin{equation}\label{L:SC:eq1} I_\cX-S_mT_m\in\Phi_{km}(\cX). \end{equation}
  Once we have  established this claim, the conclusion will follow from another application of \Cref{T:SCfredholmChar}.

  We prove the claim by considering three different cases: $m\ge 2$, $m=-1$ and $m\le-2$. (Note that the case $m=1$ is already covered by the choice of~$S_1$ and~$T_1$.)

  \emph{Case 1.} For $m\ge2$, we can apply the Binomial Theorem because~$I_\cX$ and~$S_1T_1$ commute. It shows that
  \[  (I_\cX-S_1T_1)^m = I_\cX + \sum_{j=1}^m\binom{m}{j} (-S_1T_1)^j = I_\cX -S_1T_1\sum_{j=1}^m\binom{m}{j} (-S_1T_1)^{j-1}, \]
  so the Index Theorem implies that the operators $S_m = S_1\in\sB(\cY,\cX)$ and $T_m= T_1\sum_{j=1}^m\binom{m}{j}(-S_1T_1)^{j-1}\in\sB(\cX,\cY)$ satisfy~\eqref{L:SC:eq1}.

      \emph{Case 2.} For $m=-1$, we consider the cases $k>0$ and $k<0$ separately. For \mbox{$k>0$}, \Cref{perturbkernel2} implies that we can find operators $U\in\sB(\cY,\cX)$ and $V\in\sB(\cX,\cY)$ such that $S_1T_1-UV$ is a finite-rank operator and $I_\cX-UV\in\Phi_k(\cX)$ with $\alpha(I_\cX-UV)=k$. Then $\beta(I_\cX-UV)=0$, so $I_\cX-UV$ is a surjective Fredholm operator, and therefore it has a right inverse~$R\in\Phi_{-k}(\cX)$.  Consequently
      \[ I_\cX = (I_\cX-UV)R = R - UVR, \]
which implies that $S_{-1} = -U\in\sB(\cY,\cX)$ and $T_{-1} = VR\in\sB(\cX,\cY)$ satisfy~\eqref{L:SC:eq1} because $I_\cX-S_{-1}T_{-1} = R\in\Phi_{-k}(\cX)$.

      The argument for $k<0$ is very similar. In this case, we can  apply \Cref{perturbkernel2} to find  $U\in\sB(\cY,\cX)$ and $V\in\sB(\cX,\cY)$ such that $S_1T_1-UV$ is a finite-rank operator and $I_\cX-UV\in\Phi_k(\cX)$ with $\alpha(I_\cX-UV)=0$. Then, being  an injective Fredholm operator, $I_\cX-UV$ has a left inverse~$L\in\Phi_{-k}(\cX)$, which implies that the operators $S_{-1} = -LU\in\sB(\cY,\cX)$ and $T_{-1} = V\in\sB(\cX,\cY)$ satisfy
      \[ I_\cX-S_{-1}T_{-1} = L(I_\cX-UV) + (LU)V = L\in\Phi_{-k}(\cX). \]

      \emph{Case 3.}   Finally, for $m\le -2$, we apply the argument from Case 1 to the $-m^{\text{th}}$ power of the operator $I_\cX-S_{-1}T_{-1}$, where $S_{-1}$ and $T_{-1}$ are the operators found in Case 2, to conclude that the  operators $S_m=S_{-1}\in\sB(\cY,\cX)$ and $T_m= T_{-1} \sum_{j=1}^{-m}\binom{-m}{j}(-S_{-1}T_{-1})^{j-1}\in\sB(\cX,\cY)$ satisfy~\eqref{L:SC:eq1}.

\ref{L:SC5}. First, if $\operatorname{sc}(\cX,\cY) =0$,  then the definition~\eqref{eq:sc} of~$\operatorname{sc}(\cX,\cY)$ implies that $\mathbb{I}_{\normalfont{\text{SC}}}(\cX,\cY)\cap\N =\emptyset$, and therefore  $\mathbb{I}_{\normalfont{\text{SC}}}(\cX,\cY)=\{0\}=\operatorname{sc}(\cX,\cY)\Z$ by~\ref{L:SC1} and~\ref{L:SC2}.

Second, suppose that $\eae(\cX,\cY) = \operatorname{sc}(\cX,\cY)$, and call this number $k_0$. Then, by~\ref{L:SC3}, \ref{L:SC2} and  \Cref{C:IphiXcapIphiY}, we have
  \[ k_0\Z\subseteq  \mathbb{I}_{\normalfont{\text{SC}}}(\cX,\cY)\subseteq \mathbb{I}_\Phi(\cX)\cap\mathbb{I}_\Phi(\cY) = k_0\Z, \]
which shows that $\mathbb{I}_{\normalfont{\text{SC}}}(\cX,\cY)=k_0\Z$, as required.
\end{proof}

\begin{proof}[Proof of Proposition~{\normalfont{\ref{P:SC1806}}}]
  We begin by verifying the chain of inclusions~\eqref{SC1806incl}, which we restate here for ease of reference:
    \begin{align*}
\operatorname{sc}(\cX,\cY)\Z &\subseteq  \mathbb{I}_{\normalfont{\text{SC}}}(\cX,\cY)
=  \{ k\in\Z : \opSC_k(\cX,\cY)
= \opEAE_k(\cX,\cY)\ne\emptyset\}\\
&\subseteq\eae(\cX,\cY)\Z
=  \{ k\in\Z : \opEAE_k(\cX,\cY)\ne\emptyset\}
=  \mathbb{I}_{\Phi}(\cX)\cap\mathbb{I}_{\Phi}(\cY).
\end{align*}
    The first inclusion    is immediate from \Cref{L:SC}\ref{L:SC3}--\ref{L:SC2}, while the equality of the second and  third set in the first line follows by  combining the definition of~$\mathbb{I}_{\normalfont{\text{SC}}}(\cX,\cY)$ with the equivalence of conditions~\ref{T:SCfredholmChar2} and~\ref{T:SCfredholmChar3} in \Cref{T:SCfredholmChar}. \Cref{P:EAEfredholmIndex} shows that the three sets in the second line are equal (the equality of the first and last of these sets was also recorded in  \Cref{C:IphiXcapIphiY}), and finally the inclusion at the beginning of the second line follows because the final set in the first line is trivially contained in the second set in the second line.

Next, to prove the first claim of  Proposition~\ref{P:SC1806},  suppose that $\opEAE_k(\cX,\cY)=\opSC_k(\cX,\cY)$ for every $k\in\Z$. Then
\[ \mathbb{I}_{\normalfont{\text{SC}}}(\cX,\cY) = \{ k\in\Z : \opEAE_k(\cX,\cY)\ne\emptyset\}  = \mathbb{I}_\Phi(\cX)\cap\mathbb{I}_\Phi(\cY), \]
where the final equality follows from~\eqref{SC1806incl}. Hence the definitions~\eqref{eq:eaeDefn} and~\eqref{eq:sc} show that $\eae(\cX,\cY) = \operatorname{sc}(\cX,\cY)$.

Conversely, suppose that $\eae(\cX,\cY) = \operatorname{sc}(\cX,\cY)$, and call this number $k_0$. Then the inclusions in~\eqref{SC1806incl} are in fact equalities, so \mbox{$\opEAE_k(\cX,\cY)=\opSC_k(\cX,\cY)\, (\ne\emptyset)$} for every $k\in\mathbb{I}_{\normalfont{\text{SC}}}(\cX,\cY) = k_0\Z$. On the other hand, \Cref{T:SCdichotomy}\ref{T:SCdichotomy1} shows that the identity $\opEAE_k(\cX,\cY)=\opSC_k(\cX,\cY)\,  (=\emptyset)$ is also true for every $k\notin\eae(\cX,\cY)\Z= k_0\Z$.

Finally, we verify that  $\operatorname{sc}(\cX,\cY) = n\eae(\cX,\cY)$  for some $n\in\N_0$. Set  $k_0=\operatorname{sc}(\cX,\cY)\in\N_0$. Then \Cref{L:SC}\ref{L:SC3} shows that $\emptyset\ne \opSC_{k_0}(\cX,\cY)\subseteq\opEAE_{k_0}(\cX,\cY)$, so  $k_0=n\eae(\cX,\cY)$ for some $n\in\N_0$ by \Cref{P:EAEfredholmIndex} and the fact that~$k_0$ and $\eae(\cX,\cY)$ are both non-negative.
\end{proof}

  \begin{rem} To illustrate the applicability of our work thus far, let us explain how it leads to an explicit algorithm for deciding whether EAE and SC are equivalent for all pairs of Fredholm operators on a given pair of Banach spaces~$(\cX,\cY)$.
\begin{enumerate}[label={\normalfont{(\roman*)}}]
\item Find, if possible, the least $k\in\N$ such that $\Phi_k(\cX)\ne\emptyset$ and $\Phi_k(\cY)\ne \emptyset$. This is $k_0=\eae(\cX,\cY)$.
\item If no such~$k\in\N$ exists, then  $\eae(\cX,\cY)=0 =\operatorname{sc}(\cX,\cY)$, and EAE and SC are equivalent for all  pairs of Fredholm operators on~$\cX$ and~$\cY$ by \Cref{P:SC1806}. More precisely, we have
  \[ \opSC_k(\cX,\cY)=\opEAE_k(\cX,\cY)\ \begin{cases} \ne\emptyset\quad &\text{for}\  k=0\\
  =\emptyset\quad &\text{for}\ k\in\Z\setminus\{0\}. \end{cases} \]
\item Otherwise choose any pair of Fredholm operators $(U,V)\in\Phi_{k_0}(\cX)\times\Phi_{k_0}(\cY)$ with $\alpha(U)=\alpha(V)$ (and hence $\beta(U)=\beta(V)$),
  and decide whether~$U$ and~$V$ are SC.
 \begin{enumerate}[label={\normalfont{(\arabic*)}}]
\item If $U$ and $V$ are SC, then \Cref{P:SC1806} implies that  EAE and SC are equivalent for all pairs of Fredholm operators on~$\cX$ and~$\cY$,  and
\[ \opSC_k(\cX,\cY)=\opEAE_k(\cX,\cY)\ \begin{cases} \ne\emptyset\quad &\text{for}\  k\in k_0\Z\\ =\emptyset\quad &\text{for}\ k\in\Z\setminus k_0\Z. \end{cases} \]
\item Otherwise   EAE and SC are evidently not equivalent for all pairs of Fredholm operators  on~$\cX$ and~$\cY$,   as $(U,V)$ is a concrete example of a pair which is EAE, but not SC.
\end{enumerate}
\end{enumerate}
\end{rem}

  Our next lemma  is the counterpart of \Cref{L:isoInvarianceIphi} for SC, showing that  the set  $\mathbb{I}_{\normalfont{\text{SC}}}(\cX,\cY)$, and hence the associated index  $\operatorname{sc}(\cX,\cY)$, is an isomorphic invariant.

\begin{lem}\label{L:isoInvarianceISC}
  Let $\cX_1$, $\cX_2$, $\cY_1$ and~$\cY_2$ be Banach spaces satisfying $\cX_1\cong\cX_2$ and \mbox{$\cY_1\cong \cY_2$}. Then $\mathbb{I}_{\normalfont{\text{SC}}}(\cX_1,\cY_1) = \mathbb{I}_{\normalfont{\text{SC}}}(\cX_2,\cY_2)$, and
consequently $\operatorname{sc}(\cX_1,\cY_1) = \operatorname{sc}(\cX_2,\cY_2)$.
\end{lem}

\begin{proof} Take  isomorphisms $R\in\sB(\cX_1,\cX_2)$ and $S\in\sB(\cY_1,\cY_2)$. For each $k\in\mathbb{I}_{\text{SC}}(\cX_1,\cY_1)$, we can find $U\in\Phi_k(\cX_1)$ and $V\in\Phi_k(\cY_1)$ which are SC, so that there exist  isomorphisms $A\in\sB(\cX_1)$ and $D\in\sB(\cY_1)$ and operators $B\in\sB(\cY_1,\cX_1)$ and $C\in\sB(\cX_1,\cY_1)$  such that~\eqref{SC} is satisfied. The Index Theorem implies that $RUR^{-1}\in\Phi_k(\cX_2)$ and $SVS^{-1}\in\Phi_k(\cY_2)$, and it is easy to check that they are SC, using  the operators  $RAR^{-1}\in\sB(\cX_2)$, $SDS^{-1}\in\sB(\cY_2)$, $RBS^{-1}\in\sB(\cY_2,\cX_2)$ and $SCR^{-1}\in\sB(\cX_2,\cY_2)$ to verify~\eqref{SC}.
  This implies that  \mbox{$k\in\mathbb{I}_{\text{SC}}(\cX_2,\cY_2)$}, so $\mathbb{I}_{\normalfont{\text{SC}}}(\cX_1,\cY_1)\subseteq\mathbb{I}_{\normalfont{\text{SC}}}(\cX_2,\cY_2)$.  The opposite inclusion follows by interchanging~$\cX_1$ and~$\cX_2$, and~$\cY_1$ and~$\cY_2$.

The  final statement is immediate from  the definition~\eqref{eq:sc} of~$\operatorname{sc}$.
\end{proof}

As another  consequence of \Cref{L:SC}, we obtain   the following variant of \Cref{P:EAEfredholmIndex} for SC.

\begin{cor}\label{P:SC} Let~$\cX$ and~$\cY$ be Banach spaces. Then  the set $\mathbb{I}_{\normalfont{\text{SC}}}(\cX,\cY)$ is closed under addition if and only if
$\mathbb{I}_{\normalfont{\text{SC}}}(\cX,\cY) = \operatorname{sc}(\cX,\cY)\Z$.
\end{cor}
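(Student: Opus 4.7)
The plan is to prove both implications directly from Lemma 5.1, using the standard fact that subgroups of $\Z$ are principal ideals.

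The implication $\Leftarrow$ is immediate: the set $\operatorname{sc}(\cX,\cY)\Z$ is a subgroup of $\Z$, hence in particular closed under addition.

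For the implication $\Rightarrow$, I would argue as follows. Suppose $\mathbb{I}_{\text{SC}}(\cX,\cY)$ is closed under addition. By \Cref{L:SC}\ref{L:SC1}--\ref{L:SC2}, this set also contains~$0$ and is closed under multiplication by arbitrary integers (in particular by~$-1$). Hence $\mathbb{I}_{\text{SC}}(\cX,\cY)$ is a subgroup of~$\Z$, and since~$\Z$ is a principal ideal domain, there exists a unique $n\in\N_0$ such that $\mathbb{I}_{\text{SC}}(\cX,\cY) = n\Z$. It then remains to identify~$n$ with $\operatorname{sc}(\cX,\cY)$, which is a straightforward case distinction using the definition~\eqref{eq:sc}: if $\mathbb{I}_{\text{SC}}(\cX,\cY)\cap\N=\emptyset$ then $n=0=\operatorname{sc}(\cX,\cY)$, and otherwise $n=\min\mathbb{I}_{\text{SC}}(\cX,\cY)\cap\N=\operatorname{sc}(\cX,\cY)$.

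There is no real obstacle here; the entire content has been packaged into \Cref{L:SC}, and the corollary is essentially an observation that properties~\ref{L:SC1}--\ref{L:SC2} together with closure under addition upgrade $\mathbb{I}_{\text{SC}}(\cX,\cY)$ from ``closed under integer scalings'' to a genuine ideal of~$\Z$.
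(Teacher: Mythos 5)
Your proposal is correct and follows essentially the same route as the paper: both directions are handled by using Lemma~\ref{L:SC}\ref{L:SC1} and~\ref{L:SC2} together with closure under addition to conclude that $\mathbb{I}_{\normalfont{\text{SC}}}(\cX,\cY)$ is an ideal of~$\Z$, hence principal, and then identifying the non-negative generator with $\operatorname{sc}(\cX,\cY)$ from the definition~\eqref{eq:sc}. The only cosmetic difference is that you phrase the argument in terms of subgroups of~$\Z$ rather than ideals, which is the same thing.
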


\begin{proof} The implication $\Leftarrow$ is obvious because  the set $\operatorname{sc}(\cX,\cY)\Z$  is closed under addition. Conversely,  suppose that $\mathbb{I}_{\text{SC}}(\cX,\cY)$ is closed under addition. Then, in view of \Cref{L:SC}\ref{L:SC1} and~\ref{L:SC2}, it is an ideal of~$\Z$, so  $\mathbb{I}_{\text{SC}}(\cX,\cY)=m\Z$ for some $m\in\N_0$.  Combining this identity with the definition~\eqref{eq:sc} of $\operatorname{sc}(\cX,\cY)$, we conclude that $m=\operatorname{sc}(\cX,\cY)$.
  \end{proof}

\Cref{P:SC} is not entirely satisfactory because we have been unable to answer the following question.

\begin{quest}\label{Q:IscAdditive} Is the set $\mathbb{I}_{\normalfont{\text{SC}}}(\cX,\cY)$  closed under addition for every  pair of Banach spaces~$(\cX,\cY)$?
\end{quest}

 We know that the answer to this question is ``yes'' in certain cases because \Cref{L:SC}\ref{L:SC5} shows that $\mathbb{I}_{\normalfont{\text{SC}}}(\cX,\cY)$  is closed under addition if $\operatorname{sc}(\cX,\cY) =0$ or $\operatorname{sc}(\cX,\cY) = \eae(\cX,\cY)$. We can also obtain a positive answer to it by imposing suitable conditions on the Banach spaces~$\cX$ and~$\cY$.
To state this result precisely, we require the following additional notation and terminology.

\begin{defn} Let~$\cX$ and~$\cY$ be Banach spaces.
 \begin{enumerate}[label={\normalfont{(\roman*)}}]
  \item Set $\sG_\cY(\cX) =\{ ST : S\in\sB(\cY,\cX),\, T\in\sB(\cX,\cY)\}$.
  \item We say that a subset~$\Sigma$ of~$\sB(\cX,\cY)$ is \emph{essentially closed under addition} if, for every pair of operators $U,V\in\Sigma$,  there exists an inessential operator  $R\in\sE(\cX,\cY)$  such that $U+V-R\in\Sigma$.
\end{enumerate}
\end{defn}

\begin{prop}\label{P:IscAdditive}
  Let $\cX$ and $\cY$ be Banach spaces, and suppose
that at least one of the sets~$\sG_\cY(\cX)$ and $\sG_\cX(\cY)$ is essentially closed under addition.
Then~$\mathbb{I}_{\normalfont{\text{SC}}}(\cX,\cY)$ is closed under addition.
\end{prop}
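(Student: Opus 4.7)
The plan is to reduce the statement to a manipulation of the characterization of $\mathbb{I}_{\normalfont{\text{SC}}}(\cX,\cY)$ supplied by \Cref{T:SCfredholmChar}. Suppose $\sG_\cY(\cX)$ is essentially closed under addition, and take $k,l\in\mathbb{I}_{\normalfont{\text{SC}}}(\cX,\cY)$. By the equivalence \ref{T:SCfredholmChar3}$\Leftrightarrow$\ref{T:SCfredholmChar1} in \Cref{T:SCfredholmChar}, there exist operators $S_1,S_2\in\sB(\cY,\cX)$ and $T_1,T_2\in\sB(\cX,\cY)$ such that $I_\cX-S_1T_1\in\Phi_k(\cX)$ and $I_\cX-S_2T_2\in\Phi_l(\cX)$.

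The key observation is that, by the Index Theorem,
\[ (I_\cX-S_1T_1)(I_\cX-S_2T_2) = I_\cX - W\in\Phi_{k+l}(\cX),\quad\text{where}\quad W = S_1T_1+S_2T_2 - S_1T_1S_2T_2. \]
Here $S_1T_1$, $S_2T_2$ and $-S_1T_1S_2T_2 = (-S_1)(T_1S_2T_2)$ all belong to $\sG_\cY(\cX)$, since $T_1S_2T_2\in\sB(\cX,\cY)$. Applying the hypothesis that $\sG_\cY(\cX)$ is essentially closed under addition twice (first to $S_1T_1$ and $S_2T_2$, then to the resulting element and $-S_1T_1S_2T_2$), I obtain some $S_3\in\sB(\cY,\cX)$, $T_3\in\sB(\cX,\cY)$ and an inessential operator $R\in\sE(\cX)$ with $W = S_3T_3 + R$.

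Consequently $I_\cX - S_3T_3 = (I_\cX - W) + R$, and since Fredholm operators of index $k+l$ are stable under inessential perturbations (\Cref{R:ss_iness}\ref{inessperturb}), we conclude that $I_\cX - S_3T_3\in\Phi_{k+l}(\cX)$. Invoking \ref{T:SCfredholmChar1}$\Rightarrow$\ref{T:SCfredholmChar3} in \Cref{T:SCfredholmChar} now yields $k+l\in\mathbb{I}_{\normalfont{\text{SC}}}(\cX,\cY)$, as required. The case where $\sG_\cX(\cY)$ is essentially closed under addition is handled by the same argument with the roles of $\cX$ and $\cY$ swapped, using the equivalence \ref{T:SCfredholmChar3}$\Leftrightarrow$\ref{T:SCfredholmChar4} in \Cref{T:SCfredholmChar} at both ends.

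There is no real obstacle here beyond identifying the right algebraic rearrangement; the mild subtlety is recognizing that the cross term $S_1T_1S_2T_2$ lies in $\sG_\cY(\cX)$ (rather than, say, in some larger class) so that the essentially-closed-under-addition hypothesis can be applied to it, and that two inessential perturbations accumulated along the way can be absorbed in a single step at the end since $\sE(\cX)$ is closed under addition.
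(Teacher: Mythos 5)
Your proof is correct and follows essentially the same route as the paper's: expand $(I_\cX-S_1T_1)(I_\cX-S_2T_2)$, recognize the result as $I_\cX-W$ with $W$ a sum of elements of $\sG_\cY(\cX)$, invoke essential closedness under addition, and finish with inessential perturbation invariance of the Fredholm index together with \Cref{T:SCfredholmChar}. The only (cosmetic) difference is that you expand $W$ into three terms $S_1T_1+S_2T_2-S_1T_1S_2T_2$ and must apply the hypothesis twice, whereas the paper groups it as $S_1T_1(I_\cX-S_2T_2)+S_2T_2$, a two-term decomposition into elements of $\sG_\cY(\cX)$ that needs the hypothesis only once.
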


\begin{proof}
Suppose that~$\sG_\cY(\cX)$ is essentially closed under addition, and take $k_1,k_2\in\mathbb{I}_{\normalfont{\text{SC}}}(\cX,\cY)$. Then by \Cref{T:SCfredholmChar}, we can find operators $S_j\in\sB(\cY,\cX)$ and $T_j\in\sB(\cX,\cY)$ such that $I_\cX - S_jT_j\in\Phi_{k_j}(\cX)$  for $j=1,2$. The Index Theorem shows that
\begin{equation}\label{P:IscAdditive:eq} \Phi_{k_1+k_2}(\cX)\ni(I_\cX-S_1T_1)(I_\cX-S_2T_2) = I_\cX - [S_1T_1(I_\cX-S_2T_2) + S_2T_2]. \end{equation}
Both of the operators $S_1T_1(I_\cX-S_2T_2)$ and $S_2T_2$ belong to $\sG_\cY(\cX)$, so by
the hypothesis, we can find operators $S_3\in\sB(\cY,\cX)$, $T_3\in\sB(\cX,\cY)$ and $R\in\sE(\cX)$ such that $S_1T_1(I_\cX-S_2T_2) + S_2T_2 = S_3T_3+R$. Combining~\eqref{P:IscAdditive:eq} with \Cref{R:ss_iness}\ref{inessperturb}, we deduce that
\[ \Phi_{k_1+k_2}(\cX)\ni I_\cX - [S_1T_1(I_\cX-S_2T_2) + S_2T_2]+R =
I_\cX-S_3T_3, \]
 and therefore $\opSC_{k_1+k_2}(\cX,\cY)\ne\emptyset$ by another application of \Cref{T:SCfredholmChar}. This shows that $k_1+k_2\in\mathbb{I}_{\normalfont{\text{SC}}}(\cX,\cY)$, as required.

The case where~$\sG_\cX(\cY)$ is essentially closed under addition is similar, just using condition~\ref{T:SCfredholmChar4} in \Cref{T:SCfredholmChar} instead of condition~\ref{T:SCfredholmChar1}.
\end{proof}

\begin{rem}\label{R:Aug2022}
  The set~$\sG_\cY(\cX)$ is closed under addition (without the need for any inessential perturbations) if the Banach space~$\cY$ contains a complemented subspace isomorphic to~\mbox{$\cY\oplus \cY$}. This result is ``folklore''; it can for instance be found in \cite[the paragraph following Definition~3.6]{LLR}. Most ``classical'' Banach spaces~$\cY$ satisfy the even stronger condition that $\cY\cong \cY\oplus\cY$. The two conditions are not equivalent because Gowers and Maurey~\cite[\S{}(4.4)]{gm2} have constructed a Banach space~$\cY$ which is isomorphic to its cube~$\cY\oplus\cY\oplus\cY$, but not to its square~$\cY\oplus\cY$. Hence~$\cY$ contains a complemented subspace isomorphic to~$\cY\oplus \cY$ without being isomorphic to it.

  There are infinite-dimensional Banach spaces~$\cY$ which do not contain any complemented subspaces isomorphic to~\mbox{$\cY\oplus \cY$}. James' quasi-reflexive Banach space, which will feature prominently in the next example, was the first such example.
\end{rem}

\begin{example}\label{Ex:James} The purpose of this example is to show that the converse of \Cref{P:IscAdditive} fails; that is, we shall construct Banach spaces~$\cX$ and~$\cY$ such that~$\mathbb{I}_{\normalfont{\text{SC}}}(\cX,\cY)$ is closed under addition, but neither~$\sG_\cY(\cX)$ nor~$\sG_\cX(\cY)$ are essentially closed under addition.
  This construction relies heavily on the quasi-reflexive James spaces~$\cJ_p$ for $1<p<\infty$. These Banach spaces originate in James' paper~\cite{Ja}, where only the case $p=2$ was considered. Subsequently, Edelstein and Mityagin~\cite{em} observed that James' methods and results carry over to arbitrary $p\in(1,\infty)$.
  We require the following specific facts about this family of Banach spaces:
\begin{enumerate}[label={\normalfont{(\roman*)}}]
\item\label{Ex:James1} $\cJ_p$ is isomorphic to its hyperplanes for every  $p\in(1,\infty)$, so $\gamma(\cJ_p)=1$.
  \item\label{Ex:James2} $\sB(\cJ_q,\cJ_p)=\sK(\cJ_q,\cJ_p)$ for $1<p<q<\infty$ by \cite[Theorem~4.5]{LW}, and therefore~$\cJ_p$ and~$\cJ_q$ are essentially incomparable whenever $p,q\in(1,\infty)$ are distinct.
  \item\label{Ex:James3} $\sK(\cJ_p)=\sE(\cJ_p)\subseteq\sW(\cJ_p)$ for every $p\in(1,\infty)$ by \cite[Proposition~4.9]{NJLmax}, where~$\sW(\cJ_p)$ denotes the ideal of weakly compact operators on~$\cJ_p$.

Berkson and Porta \cite[page~18]{bp} for $p=2$ and Edel\-stein and Mit\-ya\-gin~\cite{em} for general~$p\in(1,\infty)$ observed that~$\sW(\cJ_p)$ has codimension~$1$ in~$\sB(\cJ_p)$, so we have a unital algebra homomorphism $\varphi\colon\sB(\cJ_p)\to\mathbb{K}$ with \mbox{$\ker\varphi = \sW(\cJ_p)$}. We shall in fact require the amplification of this homomorphism to the $2\times 2$ matrices, that is, the unital algebra homomorphism \mbox{$\varphi_2\colon M_2(\sB(\cJ_p))\to M_2(\mathbb{K})$} given by
\[ \varphi_2\left(\mat{R_{11} & R_{12}\\ R_{21} & R_{22}} \right)
 = \mat{ \varphi(R_{11}) & \varphi(R_{12})\\ \varphi(R_{21}) & \varphi(R_{22})}.
 \]
 \end{enumerate}
We are now ready to begin our construction: Choose  distinct numbers $p,q\in(1,\infty)$, and set $\cX=\cJ_p\oplus \cJ_p\oplus \cJ_q$ and $\cY=\cJ_p\oplus \cJ_q\oplus \cJ_q$.

First, we observe that~$\mathbb{I}_{\text{SC}}(\cX,\cY)$ is  closed under addition. This follows immediately from the fact that $\mathbb{I}_{\text{SC}}(\cX,\cY)=\Z$. Indeed, for each $k\in\Z$, \ref{Ex:James1} implies that we can take $R\in\Phi_k(\cJ_p)$. Then the operators
\[ S =\mat{I_{\cJ_p} - R & 0 & 0\\ 0 & 0 & 0\\ 0 & 0 & 0}
\in\sB(\cY,\cX)\qquad\text{and}\qquad T = \mat{I_{\cJ_p} & 0 & 0\\ 0 & 0 & 0\\ 0 & 0 & 0}
\in\sB(\cX,\cY) \]
    satisfy
\[ I_{\cX} - ST = \mat{R & 0 & 0\\ 0 & I_{\cJ_p} & 0\\ 0 & 0 & I_{\cJ_q}}
\in\Phi_k(\cX), \]
so $\opSC_k(\cX,\cY)\ne\emptyset$ by \Cref{T:SCfredholmChar}, and therefore $k\in\mathbb{I}_{\text{SC}}(\cX,\cY)$, as desired.

Second, we shall show the set~$\sG_\cY(\cX)$
 is not essentially closed under addition. Assume the contrary, and consider the operators
\begin{align*} U &=\mat{I_{\cJ_p} & 0 & 0\\ 0 & 0 & 0\\ 0 & 0 & 0} = \mat{I_{\cJ_p} & 0 & 0\\ 0 & 0 & 0\\ 0 & 0 & 0}  \mat{I_{\cJ_p} & 0 & 0\\ 0 & 0 & 0\\ 0 & 0 & 0}\in\sB(\cX)\\ \intertext{and}
  V &= \mat{0 & 0 & 0\\ 0 & I_{\cJ_p} & 0\\ 0 & 0 & 0} = \mat{0 & 0 & 0\\ I_{\cJ_p}  & 0 & 0\\ 0 & 0 & 0} \mat{0 & I_{\cJ_p} & 0\\ 0 & 0 & 0\\ 0 & 0 & 0}\in\sB(\cX). \end{align*} They both belong to~$\sG_{\cY}(\cX)$ as the indicated factorizations show.
Therefore, by the hypo\-thesis, we can find operators $S\in\sB(\cY,\cX)$ and $T\in\sB(\cX,\cY)$ such that \mbox{$U+V - ST\in\sE(\cX)$}. By~\ref{Ex:James2}, we can write $S = S_1+S_2$ and $T=T_1+T_2$, where
  \[ S_1 = \mat{S_{11} & 0 & 0\\ S_{21} & 0 & 0\\ 0 & S_{32} & S_{33}} \qquad\text{and}\qquad
T_1 =\mat{T_{11} & T_{12} & 0\\ 0 & 0 & T_{23}\\ 0 & 0 & T_{33}}, \]
and $S_2\in\sE(\cY,\cX)$ and $T_2\in\sE(\cX,\cY)$. Since~$\sE$ is an operator ideal, we deduce that
\begin{multline*} \sE(\cJ_p\oplus \cJ_p)\ni \mat{I_{\cJ_p} & 0 & 0\\ 0 & I_{\cJ_p} & 0}
(U+V-S_1T_1) \mat{I_{\cJ_p} & 0\\ 0 & I_{\cJ_p}\\ 0 & 0}\\ =  \mat{I_{\cJ_p} & 0 \\ 0 & I_{\cJ_p}} - \mat{S_{11}T_{11} & S_{11}T_{12}\\ S_{21}T_{11} & S_{21}T_{12}}.
\end{multline*}
Hence, applying the algebra homomorphism~$\varphi_2$ from~\ref{Ex:James3}, we obtain
\begin{align*}
\mat{1 & 0\\ 0 & 1}
 = \varphi_2 \left(\mat{I_{\cJ_p} & 0 \\ 0 & I_{\cJ_p}} \right)
 &= \varphi_2 \left(\mat{S_{11}T_{11} & S_{11}T_{12}\\ S_{21}T_{11} & S_{21}T_{12}} \right)\\
 &= \mat{ \varphi(S_{11})\varphi(T_{11}) & \varphi(S_{11})\varphi(T_{12})\\ \varphi(S_{21})\varphi(T_{11}) & \varphi(S_{21})\varphi(T_{12})}. \end{align*}
  However, this is impossible because the diagonal entries imply that $\varphi(S_{11})$, $\varphi(T_{11})$, $\varphi(S_{21})$ and~$\varphi(T_{12})$ are all non-zero, but then the off-diagonal entries $\varphi(S_{11})\varphi(T_{12})$ and $\varphi(S_{21})\varphi(T_{11})$ are also non-zero. This contradiction proves that~$\sG_\cX(\cY)$ cannot be essentially closed under addition.

Finally, a similar argument with~$\cX$ and~$\cY$  interchanged shows that the set~$\sG_\cX(\cY)$ is not essentially closed under addition.
\end{example}

\begin{rem}\label{C:SC=EAE} The purpose of this remark is to summarize our knowledge  about the values of $k\in\Z$ for which the equation $\opSC_k(\cX,\cY) = \opEAE_k(\cX,\cY)$ holds true for a given pair of  Banach spaces~$(\cX,\cY)$, and explain how this problem is related to  \Cref{Q:IscAdditive}.  Recall from~\eqref{SC1806incl} and~\Cref{T:SCdichotomy}\ref{T:SCdichotomy1} that
\begin{equation}\label{C:SC=EAE:eq1}
  \begin{alignedat}{2} \opSC_k(\cX,\cY)&=\opEAE_k(\cX,\cY)\ne\emptyset\quad &&\text{for}\quad k\in\mathbb{I}_{\normalfont{\text{SC}}}(\cX,\cY),\\
   \opSC_k(\cX,\cY)&=\opEAE_k(\cX,\cY)=\emptyset\quad &&\text{for}\quad k\in\Z\setminus\eae(\cX,\cY)\Z.
  \end{alignedat}
\end{equation}

We now split in two cases, beginning with the case where the answer to \Cref{Q:IscAdditive} is affirmative, so that the set $\mathbb{I}_{\normalfont{\text{SC}}}(\cX,\cY)$ is closed under addition.  Then \Cref{P:SC} shows that $\mathbb{I}_{\normalfont{\text{SC}}}(\cX,\cY)=\operatorname{sc}(\cX,\cY)\Z$ and
  \[     \emptyset=\opSC_k(\cX,\cY)\ne\opEAE_k(\cX,\cY)\quad \text{for}\quad k\in\eae(\cX,\cY)\Z\setminus\mathbb{I}_{\normalfont{\text{SC}}}(\cX,\cY), \]
  where we have applied \Cref{P:EAEfredholmIndex} to conclude that $\opEAE_k(\cX,\cY)\ne\emptyset$.
  Together with~\eqref{C:SC=EAE:eq1}, this covers all  possible values of~$k\in\Z$. It follows in particular that EAE and SC coincides for all pairs of Fredholm operators $(U,V)\in\Phi(\cX)\times\Phi(\cY)$ if and only if $\eae(\cX,\cY)=\operatorname{sc}(\cX,\cY)$, as we have already seen in \Cref{P:SC1806}.

Otherwise, when the answer to \Cref{Q:IscAdditive} is negative, so that  $\mathbb{I}_{\normalfont{\text{SC}}}(\cX,\cY)$ fails to be closed under addi\-tion, \Cref{P:SC1806} implies that $\operatorname{sc}(\cX,\cY)=n\eae(\cX,\cY)$ for some $n\ge 2$, and   \[     \emptyset=\opSC_k(\cX,\cY)\ne\opEAE_k(\cX,\cY) \]
  for every $k\in\{\pm m\eae(\cX,\cY) : 1\le m <n\}$ because $k=\operatorname{sc}(\cX,\cY)$ is the smallest positive number for which $\opSC_k(\cX,\cY)\ne\emptyset$, and $\mathbb{I}_{\text{SC}}(\cX,\cY)$ is closed under sign changes. However, since $\mathbb{I}_{\normalfont{\text{SC}}}(\cX,\cY)$ fails to be closed under addi\-tion, there must be some number $k=m\eae(\cX,\cY)$, where  $m\in\N\cap(n,\infty)\setminus n\N$, for which \[ \opSC_k(\cX,\cY) =\opEAE_k(\cX,\cY)\ne\emptyset. \]
\end{rem}

We conclude this section with a couple of  results about direct sums which will be useful in the proof of \Cref{P:beyondprojincomp}, as well as in the next section.

\begin{lem}\label{P:SCcomplSubspace} Let~$\cX$ and~$\cY$ be Banach spaces, and suppose that~$\cY$ is isomorphic to a complemented subspace of~$\cX$. Then
\begin{equation}\label{P:SCcomplSubspace:eq1}
  \mathbb{I}_{\normalfont{\text{SC}}}(\cX,\cY)=\mathbb{I}_\Phi(\cY)\subseteq \mathbb{I}_\Phi(\cX),
\end{equation}
and consequently $\operatorname{sc}(\cX,\cY)=\eae(\cX,\cY)$.
\end{lem}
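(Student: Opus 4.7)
The plan is as follows. Since $\cY$ is isomorphic to a complemented subspace of $\cX$, there exists a closed subspace $\cZ$ such that $\cX\cong\cY\oplus\cZ$. By \Cref{L:isoInvarianceIphi} and \Cref{L:isoInvarianceISC}, we may replace $\cX$ with $\cY\oplus\cZ$ without affecting the quantities in the statement, so I would just assume $\cX=\cY\oplus\cZ$.

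The heart of the argument is the inclusion $\mathbb{I}_\Phi(\cY)\subseteq\mathbb{I}_{\text{SC}}(\cX,\cY)$. Given $k\in\mathbb{I}_\Phi(\cY)$, pick any $V\in\Phi_k(\cY)$, and define the canonical inclusion $S\in\sB(\cY,\cX)$ by $Sy=(y,0)$ and $T\in\sB(\cX,\cY)$ by $T(y,z)=(I_\cY-V)y$. A one-line computation yields
\[ I_\cY - TS = I_\cY - (I_\cY-V) = V \in \Phi_k(\cY), \]
and one may also check the companion identity $I_\cX-ST=V\oplus I_\cZ\in\Phi_k(\cX)$, so either of the equivalent conditions \ref{T:SCfredholmChar1} or \ref{T:SCfredholmChar4} of \Cref{T:SCfredholmChar} is satisfied. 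Applying that theorem gives $\opSC_k(\cX,\cY)\ne\emptyset$, i.e.\ $k\in\mathbb{I}_{\text{SC}}(\cX,\cY)$.

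For the reverse inclusion $\mathbb{I}_{\text{SC}}(\cX,\cY)\subseteq\mathbb{I}_\Phi(\cY)$, I would simply observe that $\opSC_k(\cX,\cY)\subseteq\Phi_k(\cX)\times\Phi_k(\cY)$ by definition, which immediately forces $k\in\mathbb{I}_\Phi(\cY)$; this can also be extracted from the chain of inclusions in \Cref{P:SC1806}\ref{P:SC1806iii}. This establishes the equality $\mathbb{I}_{\text{SC}}(\cX,\cY)=\mathbb{I}_\Phi(\cY)$. The remaining inclusion $\mathbb{I}_\Phi(\cY)\subseteq\mathbb{I}_\Phi(\cX)$ is trivial: if $V\in\Phi_k(\cY)$ then $V\oplus I_\cZ\in\Phi_k(\cY\oplus\cZ)=\Phi_k(\cX)$ by the additivity of the Fredholm index on block-diagonal operators, which is the ``easy half'' of \Cref{L:essincFredholmIndex}.

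Finally, the conclusion $\operatorname{sc}(\cX,\cY)=\eae(\cX,\cY)$ follows by combining the just-proved equality $\mathbb{I}_{\text{SC}}(\cX,\cY)=\mathbb{I}_\Phi(\cY)$ with $\mathbb{I}_\Phi(\cX)\cap\mathbb{I}_\Phi(\cY)=\mathbb{I}_\Phi(\cY)$ (a consequence of $\mathbb{I}_\Phi(\cY)\subseteq\mathbb{I}_\Phi(\cX)$) and with \Cref{C:IphiXcapIphiY}, so that the defining formulae \eqref{eq:eaeDefn} and \eqref{eq:sc} produce the same minimum. There is no serious obstacle to this argument; the only substantive ingredient is the explicit choice of $S$ and~$T$ above, which is the natural one suggested by \Cref{char:SC} with $M=I_\cX$ and $N=I_\cY$.
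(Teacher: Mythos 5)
Your proof is correct and follows essentially the same approach as the paper's: reduce to $\cX=\cY\oplus\cZ$ via the isomorphism-invariance lemmas, build explicit $S$ and $T$ with $I_\cX-ST$ (or $I_\cY-TS$) Fredholm of index $k$, and invoke \Cref{T:SCfredholmChar}. The only (cosmetic) difference is that the paper puts $I_\cY-R$ into $S$ and takes $T$ to be the coordinate projection, whereas you put $I_\cY-V$ into $T$ and take $S$ to be the inclusion; the two choices are mirror images of each other and yield the same block-diagonal operator $V\oplus I_\cZ$.
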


\begin{proof}
  In view of Lemmas~\ref{L:isoInvarianceIphi} and~\ref{L:isoInvarianceISC}, we may suppose that $\cX=\cY\oplus\cZ$ for some Banach space~$\cZ$.

  The inclusion $\mathbb{I}_{\normalfont{\text{SC}}}(\cX,\cY)\subseteq\mathbb{I}_\Phi(\cY)$ is clear because $\opSC_k(\cX,\cY)\subseteq\Phi_k(\cX)\times\Phi_k(\cY)$.

  Conversely, for $k\in\mathbb{I}_\Phi(\cY)$, we can take $R\in\Phi_k(\cY)$. Then the operators
  \[ S = \begin{bmatrix} I_\cY-R\\ 0\end{bmatrix}\colon\ \cY\to\cY\oplus\cZ =\cX\qquad\text{and}\qquad T = \begin{bmatrix} I_\cY & 0\end{bmatrix}\colon\ \cX =\cY\oplus\cZ\to\cY \]
satisfy
\begin{equation}\label{P:SCcomplSubspace:eq2} I_\cX -ST = \begin{bmatrix} I_\cY & 0\\ 0 & I_\cZ\end{bmatrix} - \begin{bmatrix} I_\cY-R & 0\\ 0 & 0\end{bmatrix} = \begin{bmatrix} R & 0\\ 0 & I_\cZ\end{bmatrix}\in\Phi_k(\cX), \end{equation}
so $k\in\mathbb{I}_{\normalfont{\text{SC}}}(\cX,\cY)$ by \Cref{T:SCfredholmChar}. This shows that $ \mathbb{I}_{\normalfont{\text{SC}}}(\cX,\cY)=\mathbb{I}_\Phi(\cY)$, while
 the inclusion $\mathbb{I}_\Phi(\cY)\subseteq \mathbb{I}_\Phi(\cX)$ is an immediate consequence of~\eqref{P:SCcomplSubspace:eq2}. (Alternatively, the latter inclusion follows easily from \Cref{L:essincFredholmIndex}\ref{L:essincFredholmIndex1}.)

Finally, we have $\operatorname{sc}(\cX,\cY)=\eae(\cX,\cY)$ because~\eqref{P:SCcomplSubspace:eq1} shows that
$\mathbb{I}_{\normalfont{\text{SC}}}(\cX,\cY)=\mathbb{I}_\Phi(\cX)\cap\mathbb{I}_\Phi(\cY)$.
\end{proof}

\begin{lem}\label{L:essincomp}
  Let $\cX=\cX_1\oplus\cX_2$ and $\cY=\cY_1\oplus\cY_2$ be Banach spaces. Then:
\begin{enumerate}[label={\normalfont{(\roman*)}}]
\item\label{L:essincomp1}
  $\mathbb{I}_{\normalfont{\text{SC}}}(\cX_2,\cY_2)\subseteq \mathbb{I}_{\normalfont{\text{SC}}}(\cX,\cY)$.
\item\label{L:essincomp2}
  Suppose that each of the pairs $(\cX_1,\cY_1)$, $(\cX_1,\cY_2)$ and $(\cY_1,\cX_2)$ is essentially incomparable. Then   $\mathbb{I}_{\normalfont{\text{SC}}}(\cX,\cY)=
  \mathbb{I}_{\normalfont{\text{SC}}}(\cX_2,\cY_2)$.
\end{enumerate}
\end{lem}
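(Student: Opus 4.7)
The strategy is to use \Cref{T:SCfredholmChar}, which translates membership in $\mathbb{I}_{\normalfont{\text{SC}}}(\cX,\cY)$ into the existence of operators $S\in\sB(\cY,\cX)$ and $T\in\sB(\cX,\cY)$ with $I_\cX-ST\in\Phi_k(\cX)$, into a concrete matrix-theoretic condition on the direct sum decompositions.

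For part~\ref{L:essincomp1}, I will take $k\in\mathbb{I}_{\normalfont{\text{SC}}}(\cX_2,\cY_2)$ and, using \Cref{T:SCfredholmChar}, find $S_0\in\sB(\cY_2,\cX_2)$ and $T_0\in\sB(\cX_2,\cY_2)$ with $I_{\cX_2}-S_0T_0\in\Phi_k(\cX_2)$. I would then extend by zero via
\[ S = \begin{bmatrix} 0 & 0\\ 0 & S_0\end{bmatrix}\in\sB(\cY,\cX)\qquad\text{and}\qquad T = \begin{bmatrix} 0 & 0\\ 0 & T_0\end{bmatrix}\in\sB(\cX,\cY), \]
so that $I_\cX-ST = \bigl[\begin{smallmatrix}I_{\cX_1} & 0\\ 0 & I_{\cX_2}-S_0T_0\end{smallmatrix}\bigr]\in\Phi_k(\cX)$, giving $k\in\mathbb{I}_{\normalfont{\text{SC}}}(\cX,\cY)$ by the same theorem.

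For part~\ref{L:essincomp2}, in view of~\ref{L:essincomp1} I only need to establish the reverse inclusion $\mathbb{I}_{\normalfont{\text{SC}}}(\cX,\cY)\subseteq\mathbb{I}_{\normalfont{\text{SC}}}(\cX_2,\cY_2)$. For $k\in\mathbb{I}_{\normalfont{\text{SC}}}(\cX,\cY)$, \Cref{T:SCfredholmChar} again supplies $S\in\sB(\cY,\cX)$ and $T\in\sB(\cX,\cY)$ with $I_\cX-ST\in\Phi_k(\cX)$. Writing $S=[S_{ij}]$ and $T=[T_{ij}]$ with respect to the given decompositions, the three essential incomparability hypotheses force $S_{11},S_{12},S_{21}$ and $T_{11},T_{12},T_{21}$ to be inessential (since essential incomparability of a pair is equivalent to every operator between the two spaces being inessential). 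As $\sE$ is an operator ideal, every product $S_{ij}T_{k\ell}$ containing at least one such entry is inessential, and hence so is every entry of the block matrix $ST$ except possibly $S_{22}T_{22}$. Consequently
\[ I_\cX -ST = \begin{bmatrix} I_{\cX_1} & 0\\ 0 & I_{\cX_2}-S_{22}T_{22}\end{bmatrix}+R \]
for some inessential $R\in\sE(\cX)$. By \Cref{R:ss_iness}\ref{inessperturb}, subtracting $R$ preserves membership in $\Phi_k(\cX)$, so the block-diagonal operator on the right is Fredholm of index $k$. Its upper block $I_{\cX_1}$ has index $0$, so by additivity of the index $I_{\cX_2}-S_{22}T_{22}\in\Phi_k(\cX_2)$. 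A final application of \Cref{T:SCfredholmChar} (to $\cX_2$, $\cY_2$, $S_{22}$, $T_{22}$) yields $k\in\mathbb{I}_{\normalfont{\text{SC}}}(\cX_2,\cY_2)$.

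The only delicate step is the use of the operator ideal property of $\sE$ together with \Cref{R:ss_iness}\ref{inessperturb} to pass from the full matrix identity to a block-diagonal Fredholm statement; once this inessential-perturbation bookkeeping is carried out, the rest is routine. No novel idea beyond \Cref{T:SCfredholmChar} is required.
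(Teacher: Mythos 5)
Your proof is correct, and part~\ref{L:essincomp2} follows the paper's argument exactly: decompose $S$ and $T$ into $2\times 2$ operator block matrices, use the three essential incomparability hypotheses to make every block of $ST$ apart from the $(2,2)$ contribution $S_{22}T_{22}$ inessential, invoke the operator ideal property of $\sE$ together with \Cref{R:ss_iness}\ref{inessperturb} to replace $I_\cX - ST$ by the block-diagonal operator $\sbm{I_{\cX_1} & 0\\ 0 & I_{\cX_2}-S_{22}T_{22}}$ without changing the Fredholm index, and conclude via \Cref{T:SCfredholmChar}. For part~\ref{L:essincomp1} your route differs slightly: you go through \Cref{T:SCfredholmChar}, taking $S_0\in\sB(\cY_2,\cX_2)$ and $T_0\in\sB(\cX_2,\cY_2)$ with $I_{\cX_2}-S_0T_0\in\Phi_k(\cX_2)$ and extending by zero, whereas the paper works directly from the definition~\eqref{SC}: it takes a Schur-coupling quadruple $(A,B,C,D)$ for a pair $(U,V)\in\opSC_k(\cX_2,\cY_2)$ and writes down explicitly the quadruple that Schur-couples $\sbm{I_{\cX_1} & 0\\ 0 & U}$ to $\sbm{I_{\cY_1} & 0\\ 0 & V}$. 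Both arguments are equally short and correct; yours has the modest advantage of treating the two parts uniformly through \Cref{T:SCfredholmChar}, while the paper's version exhibits the Schur coupling explicitly. The difference is cosmetic.
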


\begin{proof}
  \ref{L:essincomp1}.   For each $k\in\mathbb{I}_{\normalfont{\text{SC}}}(\cX_2,\cY_2)$, we can take Schur-coupled operators $U\in\Phi_k(\cX_2)$ and  $V\in\Phi_k(\cY_2)$. Choose
  isomorphisms $A\in\sB(\cX_2)$ and $D\in\sB(\cY_2)$ and operators $B\in\sB(\cY_2,\cX_2)$ and $C\in\sB(\cX_2,\cY_2)$  such that~\eqref{SC} is satisfied. Then it is easy to see that the operators
  \[
  \mat{ I_{\cX_1} &0\\ 0& U}\in\Phi_k(\cX)\qquad \text{and}\qquad  \mat{ I_{\cY_1} &0\\ 0& V}\in\Phi_k(\cY) \]
      are Schur-coupled via
\begin{align*}
\mat{ I_{\cX_1} &0\\ 0& U} &=  \mat{ I_{\cX_1} &0\\ 0& A} - \mat{ 0 & 0\\ 0& B } \mat{ I_{\cY_1} &0\\ 0& D^{-1}} \mat{ 0 & 0\\ 0 & C}\\
\intertext{and}
\mat{ I_{\cY_1} &0\\ 0& V} &=  \mat{ I_{\cY_1} &0\\ 0& D } - \mat{ 0 & 0\\ 0& C } \mat{ I_{\cX_1} &0\\ 0& A^{-1} } \mat{ 0 & 0\\ 0 & B }, \end{align*}
so we conclude that $k\in\mathbb{I}_{\normalfont{\text{SC}}}(\cX,\cY)$.

\ref{L:essincomp2}. Suppose that  $k\in\mathbb{I}_{\normalfont{\text{SC}}}(\cX,\cY)$. By \Cref{T:SCfredholmChar}, we can find operators
\[ S = \begin{bmatrix} S_{11} &  S_{12}\\ S_{21} & S_{22} \end{bmatrix}\in\sB(\cY,\cX)\qquad\text{and}\qquad T= \begin{bmatrix}  T_{11} &  T_{12}\\ T_{21}& T_{22}\end{bmatrix}\in\sB(\cX,\cY) \]
such that $I_{\cX}-ST\in\Phi_k(\cX)$.  The hypothesis implies that~$S_{11}$, $T_{11}$, $S_{12}$, $T_{12}$, $S_{21}$ and~$T_{21}$ are inessential.
Since~$\sE$ is an operator ideal, it follows that the operator
\[
ST - \begin{bmatrix} 0 &  0\\ 0 & S_{22}T_{22} \end{bmatrix} \] is inessential, and therefore, using \Cref{R:ss_iness}\ref{inessperturb}, we obtain
\begin{align*}
I_\cX-ST\in\Phi_k(\cX)\ &\Longleftrightarrow\  \begin{bmatrix} I_{\cX_1} & 0\\ 0 & I_{\cX_2} - S_{22}T_{22}\end{bmatrix}\in\Phi_k(\cX)\\
&\Longleftrightarrow\ I_{\cX_2}-S_{22}T_{22}\in\Phi_k(\cX_2),
\end{align*}
so $k\in \mathbb{I}_{\normalfont{\text{SC}}}(\cX_2,\cY_2)$ by another application of \Cref{T:SCfredholmChar}.
\end{proof}

We can now  complete the proof of \Cref{P:beyondprojincomp}.

\begin{proof}[Proof of Proposition~{\normalfont{\ref{P:beyondprojincomp}}}]
We have already proved Equation~\eqref{nonPIeaeform} on page~\pageref{proofofEq:nonPIeaeform}.

To prove~\eqref{nonPIscform}, suppose that each of the pairs $(\cX_1,\cX_2)$, $(\cY_1,\cY_2)$ and $(\cX_1,\cY_1)$ is essentially incomparable. Then the hypothesis of
  \Cref{L:essincomp}\ref{L:essincomp2} is satisfied because $\cX_2\cong\cY_2$, so  $\mathbb{I}_{\normalfont{\text{SC}}}(\cX,\cY)= \mathbb{I}_{\normalfont{\text{SC}}}(\cX_2,\cY_2)$, and therefore $\operatorname{sc}(\cX,\cY) = \operatorname{sc}(\cX_2,\cY_2)$. Moreover, \Cref{P:SCcomplSubspace} implies that $\operatorname{sc}(\cX_2,\cY_2) = \eae(\cX_2,\cY_2)$, which completes the proof of~\eqref{nonPIscform}.

Combining~\eqref{nonPIeaeform} and~\eqref{nonPIscform}, we see that  $\eae(\cX,\cY)=\scr(\cX,\cY)$ if and only if $\gcd(\eae(\cX_1,\cY_1),\eae(\cX_2,\cY_2)) = \eae(\cX_2,\cY_2)$, which is equivalent to saying that  $\eae(\cX_2,\cY_2)$ divides $\eae(\cX_1,\cY_1)$.
\end{proof}

\section{The Gowers--Maurey--Aiena--Gonz\'{a}lez--Ferenczi cycle of ideas and the proofs of Theorems~\ref{T:essinNew}, \ref{thm:improjEAE=SC} and~\ref{T:improjUnited}}\label{sec:improj}

\noindent The Banach space that Aiena and Gonz\'{a}lez used in~\cite{AG} to show that projective incomparability does not imply essential incomparability is the so-called ``shift space''  constructed by Gowers and Maurey in \cite[\S{}(4.2)]{gm2}.
Refining the approach of Aiena and Gonz\'{a}lez, Ferenczi \cite[Section 4]{F}   has more recently used this space to prove that there is no largest proper operator ideal, thereby solving a famous open problem going back to Pietsch's monograph \cite{pie}.

The proofs of Theorems~\ref{thm:improjEAE=SC} and~\ref{T:improjUnited} are inspired by this body of work, notably the proof of \Cref{lemma_proj_incNew}\ref{lemma_proj_incNew2}. However, the shift space itself will not suffice for our purposes; we need to work with a larger family of ``higher-order shift spaces''  which Gowers and Maurey also constructed in \cite{gm2}. We shall now give a brief introduction to this family.

Following  the terminology introduced in \cite[page~549]{gm2}, for two infinite subsets $A=\{a_1<a_2<\cdots\}$ and $B=\{ b_1<b_2<\cdots\}$ of~$\N$, we define the associated \emph{spread} $S_{A,B}$ to be the linear map on~$c_{00}$ determined by
\[ S_{A,B}e_j = \begin{cases} e_{b_k}\ &\text{if}\ j=a_k\ \text{for some}\ k\in\N,\\ 0\ &\text{otherwise,} \end{cases} \]
where $(e_n)_{n\in\N}$ denotes the standard unit vector (Hamel) basis for~$c_{00}$.
 Let \mbox{$k_0\in\N_0$}, and  set $\mathscr{M}_{k_0} = \{ [k_0m+1,\infty)\cap\N : m\in\N_0\}$.
  Then
\mbox{$\mathscr{S}_{k_0} = \{ S_{A,B} : A,B\in\mathscr{M}_{k_0}\}$}
is a ``proper set of spreads'' as defined in \cite[page~549]{gm2}, but since we do not need the precise definition of this term in the sequel, we omit the details. The important point is that, by \cite[Theorem~5]{gm2}, $\mathscr{S}_{k_0}$ induces a Banach space, which we shall call the \emph{$k_0$-fold Gowers--Maurey shift space} and denote by~$\GM(k_0)$. (It is denoted~$X(\mathscr{S}_{k_0})$ in~\cite{gm2}.)

As already mentioned, Gowers and Maurey defined and investigated this family of Banach spaces in~\cite{gm2}. More precisely, they studied the space~$\GM(0)$ in~\cite[\S{}(4.1)]{gm2}, $\GM(1)$ in~\cite[\S{}(4.2)]{gm2} and $\GM(2)$ in~\cite[\S{}(4.3)]{gm2}, before outlining  the general case of~$\GM(k_0)$ for $k_0\ge3$  in the final paragraph of \cite[\S{}(4.3)]{gm2}. The following theorem summarizes the results from~\cite{gm2} that we require, together with the necessary notation and terminology.

\begin{thm}[Gowers and Maurey]\label{T:GMspace} Let $k_0\in\N_0$.
\begin{enumerate}[label={\normalfont{(\roman*)}}]
\item\label{T:GMspace1} The Banach space $\GM(k_0)$ has a normalized Schauder basis $(e_n)_{n\in\N}$ which admits an iso\-metric $k_0$-fold right shift operator $R_{k_0}\in\sB(\GM(k_0))$ given by $R_{k_0}e_n= e_{n+k_0}$ for every $n\in\N$, with left inverse $L_{k_0}\in\sB(\GM(k_0))$ given by $L_{k_0}e_n=0$ for $n\le k_0$ and $L_{k_0}e_n = e_{n-k_0}$ for $n > k_0$.
\item\label{T:GMspace2}  The Banach space
  $\GM(k_0)$ satisfies a lower $f$-estimate for the function $f(t) = \log_2(t+1);$ that is,
  \[   \log_2(n+1)\,\biggl\lVert\sum_{k=1}^n x_k\biggr\rVert\ge \sum_{k=1}^n \lVert x_k\rVert \] for every $n\in\N$ and vectors $x_1,\ldots,x_n\in\GM(k_0)$ which are consecutive in the sense that there are integers $0\le m_0<m_1<\cdots <m_n$ such that $x_k\in\operatorname{span}\{ e_j : m_{k-1}<j\le m_k\}$ for each  $1\le k\le n$.
\item\label{T:GMspace3} The Banach space $\GM(k_0)$ is indecomposable; that is, every complemented subspace of~$\GM(k_0)$ is either finite-di\-men\-sional or finite-co\-di\-men\-sional.
\item\label{T:GMspace4} The index~$\gamma$ introduced in Remark~{\normalfont{\ref{R:eae_gcd}}} is given by  $\gamma(\GM(k_0)) = k_0$, and
  $\GM(k_0)$ is not isomorphic to any of its subspaces of infinite codimension.
  Therefore a closed subspace~$\cW$ of $\GM(k_0)$ is isomorphic to~$\GM(k_0)$ if and only if \[ \dim \GM(k_0)/\cW\in k_0\N_0. \]
\item\label{T:GMspace5} The Banach space $\GM(k_0)$ contains no unconditional basic sequences.
 \end{enumerate}
\end{thm}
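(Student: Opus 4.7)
The theorem compiles properties of the Gowers--Maurey higher-order shift spaces from~\cite{gm2}, so my plan is to identify which construction or result yields each item and supply the connecting arguments. Parts~\ref{T:GMspace1} and~\ref{T:GMspace2} are built into the construction: every element of~$\mathscr{S}_{k_0}$ acts isometrically on~$\GM(k_0)$ by definition, and the right shift $R_{k_0}\colon e_n\mapsto e_{n+k_0}$ coincides with the spread $S_{A,B}$ for $A=[1,\infty)\cap\N$ and $B=[k_0+1,\infty)\cap\N$, both of which lie in~$\mathscr{M}_{k_0}$; the formula $L_{k_0}=S_{B,A}$ yields the desired left inverse. The lower $f$-estimate with $f(t)=\log_2(t+1)$ is the defining norm estimate of the Gowers--Maurey construction. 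Part~\ref{T:GMspace5} is likewise inherent in the machinery: the norms are designed precisely to preclude unconditional basic sequences, and the argument from~\cite{gm2} for~$\GM(0)$ adapts to $\GM(k_0)$ for every $k_0$ because the additional spreads do not disrupt the relevant combinatorics.

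The real content lies in parts~\ref{T:GMspace3} and~\ref{T:GMspace4}, both of which hinge on the structure theorem Gowers and Maurey establish for bounded operators on their shift spaces: every $T\in\sB(\GM(k_0))$ differs from an absolutely convergent combination $\sum_{n\ge 0}\lambda_n R_{k_0}^n+\sum_{n\ge 1}\mu_n L_{k_0}^n$ by a strictly singular operator. For~\ref{T:GMspace3}, this forces every idempotent in~$\sB(\GM(k_0))$ to be a strictly singular perturbation of a polynomial in $R_{k_0}$ and~$L_{k_0}$; such a polynomial is either scalar (giving a projection of finite rank or finite corank after perturbation) or has non-zero Fredholm index, which rules out its being an idempotent. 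For~\ref{T:GMspace4}, the index of~$R_{k_0}^n$ is~$-nk_0$ and that of~$L_{k_0}^n$ is~$nk_0$, while any non-scalar element of the above operator algebra has index equal to~$k_0$ times the winding number of its symbol; combined with stability of the Fredholm index under strictly singular perturbations (Remark~\ref{R:ss_iness}\ref{inessperturb}), this yields $\mathbb{I}_\Phi(\GM(k_0))=k_0\Z$. For the subspace statement, an isomorphism from~$\GM(k_0)$ onto a closed subspace~$\cW$ is, by the structure theorem, a strictly singular perturbation of a non-zero polynomial in $R_{k_0}$ and~$L_{k_0}$, hence semi-Fredholm with $\beta\in k_0\N_0$; conversely, $R_{k_0}^n$ provides an isomorphism onto a subspace of codimension~$nk_0$ for every~$n\in\N_0$.

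The main obstacle is verifying the structure theorem for general~$k_0$, since~\cite{gm2} records the detailed proofs only for $k_0\in\{0,1,2\}$ and indicates in a single paragraph that the remaining cases proceed analogously; one must confirm that the delicate HI-style combinatorial estimates, together with the identification of the Calkin-type algebra modulo strictly singular operators with a function algebra of symbols, genuinely extend to arbitrary $k_0$ when the shifts act in steps of~$k_0$ rather than singly.
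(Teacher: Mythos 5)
The paper's proof of this theorem consists entirely of precise citations: clauses~(i)--(ii) come from \cite[Theorem~5]{gm2}; clauses~(iii)--(v) for $k_0=0$ from \cite[(4.1)]{gm2}; clause~(iii) for $k_0\ge1$ from the proof of \cite[Theorem~13]{gm2} together with the remarks on p.~559 of that paper; clause~(iv) from \cite[Theorem~16]{gm2} for $k_0=1$ and from \cite[Theorem~19]{gm2} plus the same remarks for $k_0\ge2$; and clause~(v) from the final paragraph of \cite[p.~567]{gm2}. Your idea of routing (iii)--(iv) through the operator structure theorem is in spirit exactly \cite[Theorem~19]{gm2}, which is also the paper's source for (iv) when $k_0\ge2$; but the paper makes no attempt to re-derive it, and the gap you rightly flag at the end --- that \cite{gm2} records full proofs only for $k_0\le 2$ and sketches the remaining cases in a short remark --- is precisely what the paper accepts as adequate by citing \cite[Remarks, p.~559]{gm2}.

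Two of the deductions you sketch from the structure theorem are flawed as stated, and one claim is an oversimplification. In~(iii), you argue that a non-scalar polynomial in $R_{k_0}$ and $L_{k_0}$ has non-zero Fredholm index and so cannot be an idempotent modulo strictly singular operators; but $\tfrac12(R_{k_0}+L_{k_0})$ is non-scalar with index~$0$, so the dichotomy ``scalar or non-zero index'' is false. The correct reason the quotient algebra has only trivial idempotents is that it is a commutative Banach algebra whose maximal ideal space is connected (the circle, as for the Wiener algebra), and one then observes that a strictly singular idempotent must have finite rank. In~(iv), you assert that an isomorphic embedding of $\GM(k_0)$ onto a closed subspace $\cW$ is semi-Fredholm with $\beta\in k_0\N_0$, but the crux is to show $\beta<\infty$ in the first place, which amounts to proving that the symbol of the embedding cannot vanish on the circle; that Toeplitz-type nonvanishing statement is a genuine part of the Gowers--Maurey argument and cannot simply be asserted. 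Finally, your gloss on~(v) --- that the construction ``is designed precisely to preclude unconditional basic sequences'' --- elides that, for $k_0\ge1$, $\GM(k_0)$ is not hereditarily indecomposable, so (v) does not follow from (iii) or from the norm estimate in (ii); it is a separate result, which is why the paper cites a specific paragraph of~\cite{gm2} for it.
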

\begin{proof}
Parts~\ref{T:GMspace1} and~\ref{T:GMspace2} follow from \cite[Theorem~5]{gm2} and the definitions and conventions that it relies on.

  For $k_0=0$, parts \ref{T:GMspace3}--\ref{T:GMspace5} are all derived in \cite[\S{}(4.1)]{gm2}. (Note in this context that $L_0=R_0 = I_{\GM(0)}$.)   Hence it remains to consider $k_0\in\N$.

  \ref{T:GMspace3}. This result is contained in the proof of \cite[Theorem~13]{gm2} for $k_0=1$, with \cite[Remarks, page~559]{gm2} explaining how to  generalize that proof  to arbitrary~$k_0\ge 2$.

  \ref{T:GMspace4}. This result  is a restatement of \cite[Theorem~16]{gm2}  for $k_0=1$. For $k_0\ge 2$, it follows from \cite[Theorem~19]{gm2} and \cite[Remarks, page~559]{gm2}.

\ref{T:GMspace5}. This result is proved in the final paragraph on \cite[page~567]{gm2}.
\end{proof}

\begin{cor}\label{GMtotincompuncondbasis} Let $\cY$ be a Banach space with an unconditional basis. Then, for every $k_0\in\N_0$, $\GM(k_0)$ and~$\cY$ are totally incomparable.
\end{cor}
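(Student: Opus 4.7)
The plan is to prove the corollary by contradiction, using part~\ref{T:GMspace5} of \Cref{T:GMspace}, which asserts that $\GM(k_0)$ contains no unconditional basic sequences. Suppose, contrapositively, that $\GM(k_0)$ and $\cY$ are not totally incomparable, so there is a closed, infinite-dimensional subspace $\cW\subseteq\GM(k_0)$ together with an isomorphism $T\colon\cW\to T(\cW)\subseteq\cY$ onto its range. The image $T(\cW)$ is then a closed, infinite-dimensional subspace of~$\cY$.

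Next, I would invoke the standard Bessaga--Pe\l czy\'nski gliding-hump principle: since $\cY$ has a Schauder basis $(y_n)_{n\in\N}$ (which by hypothesis is unconditional) and $T(\cW)$ is a closed, infinite-dimensional subspace, for any $\epsilon>0$ there exists a normalized sequence $(z_n)_{n\in\N}$ in $T(\cW)$ which is $(1+\epsilon)$-equivalent to a block basic sequence of $(y_n)_{n\in\N}$. Any block basic sequence of an unconditional basis is itself unconditional, and since unconditionality is invariant under equivalence of basic sequences, $(z_n)_{n\in\N}$ is an unconditional basic sequence in~$\cY$.

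Finally, I would pull this sequence back to $\GM(k_0)$ via the isomorphism $T$. The sequence $(T^{-1}z_n)_{n\in\N}$ lies in $\cW\subseteq\GM(k_0)$, and the map $T^{-1}\colon T(\cW)\to\cW$ is a linear isomorphism, so it preserves both the basic-sequence property and unconditionality. Hence $(T^{-1}z_n)_{n\in\N}$ is an unconditional basic sequence in~$\GM(k_0)$, directly contradicting \Cref{T:GMspace}\ref{T:GMspace5}. This contradiction completes the proof.

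The main obstacle is essentially none beyond quoting the two ingredients correctly: the Bessaga--Pe\l czy\'nski block-basis selection theorem (valid in any Banach space with a Schauder basis) and the fact --- stated as \Cref{T:GMspace}\ref{T:GMspace5} --- that $\GM(k_0)$ contains no unconditional basic sequences. Both are standard; the only care required is to ensure that the isomorphism-invariance of unconditionality is explicitly noted, so that the unconditional basic sequence found in $T(\cW)\subseteq\cY$ does in fact transfer to an unconditional basic sequence in $\cW\subseteq\GM(k_0)$.
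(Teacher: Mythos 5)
Your proof is correct and follows essentially the same route as the paper: both arguments combine the fact that every closed infinite-dimensional subspace of a space with an unconditional basis contains an unconditional basic sequence with \Cref{T:GMspace}\ref{T:GMspace5}. The only difference is presentational --- the paper cites this fact directly from Lindenstrauss--Tzafriri (the comment after Problem~1.d.5 in \cite{LT1}), whereas you derive it from the Bessaga--Pe\l czy\'nski selection principle; both are fine, and your care in noting that unconditionality of a basic sequence is an isomorphic invariant (so it transfers through $T^{-1}$) is exactly the point that makes the argument go through.
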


\begin{proof} As explained in the comment after \cite[Problem~1.d.5]{LT1}, every closed, infinite-dimensional subspace of~$\cY$ contains an  unconditional basic sequence. Hence  \Cref{T:GMspace}\ref{T:GMspace5} implies that no such subspace embeds isomorphically into~$\GM(k_0)$.
\end{proof}

 Using these results, we can  easily prove \Cref{T:essinNew}.

\begin{proof}[Proof of Theorem~{\normalfont{\ref{T:essinNew}}}]
  \ref{T:essinNew1}. We must show that $\mathbb{I}_{\text{SC}}(\cX,\cY)= \{0\}$. This was already proved in \cite[Theorem~2.1(2)]{tHMRR19}, but we would like to point out that it is also an almost immediate consequence of \Cref{T:SCfredholmChar}. Indeed, the  essential incomparability of~$\cX$ and~$\cY$ means that $I_\cX-ST\in\Phi(\cX)$ for every $S\in\sB(\cY,\cX)$ and $T\in\sB(\cX,\cY)$, and \Cref{R:ss_iness}\ref{inessperturb} shows that $i(I_\cX-ST)=0$. Therefore  condition~\ref{T:SCfredholmChar1} in \Cref{T:SCfredholmChar} is satisfied only for $k=0$, so $\opSC_k(\cX,\cY)=\emptyset$ for every $k\in\Z\setminus\{0\}$.

  \ref{T:essinNew2}. Set $\cX = \GM(k_0)$,    and let~$\cY$ be a Banach space which has an unconditional basis  and is isomorphic to its   hyperplanes, so that $\gamma(\cY)=1$.  (For instance, \mbox{$\cY = \ell_2$} has these properties.)  \Cref{GMtotincompuncondbasis} shows that~$\cX$ and~$\cY$ are  totally incomparable and  therefore
  essentially incomparable. Moreover, we have $\gamma(\cX) = k_0$ by \Cref{T:GMspace}\ref{T:GMspace4},  so $\eae(\cX,\cY)=\lcm(k_0,1)=k_0$ by~\eqref{eq:eaeDefnAlt}.
 \end{proof}

While the above proof did not involve any ideas from~\cite{AG} or~\cite{F}, the proofs of Theorems~\ref{thm:improjEAE=SC} and~\ref{T:improjUnited} will, namely in the shape of part~\ref{lemma_proj_incNew2} of the following lemma.

\begin{lem}\label{lemma_proj_incNew} Let $k_0\in\N$.
\begin{enumerate}[label={\normalfont{(\roman*)}}]
\item\label{lemma_proj_incNew1}  Suppose that~$\cX_1$ and~$\cY_1$ are essentially incomparable Banach spaces with unconditional bases and that~$\cY_2$ is a closed, infinite-di\-men\-sional and infinite-co\-di\-men\-sional sub\-space of~$\GM(k_0)$. Then the Banach spaces~$\cX_1\oplus\GM(k_0)$ and~$\cY_1\oplus\cY_2$ are projectively incomparable.
\item\label{lemma_proj_incNew2} The Banach space~$\GM(k_0)$ contains a  closed, in\-finite-di\-men\-sional and in\-finite-co\-di\-men\-sional sub\-space~$\cY_2$ such that $\opSC_{k_0}(\GM(k_0),\cY_2)\ne\emptyset$.
\end{enumerate}
\end{lem}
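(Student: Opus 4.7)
Plan for part (ii): By \Cref{T:SCfredholmChar}, it suffices to exhibit a closed, infinite-dimensional and infinite-codimensional subspace $\cY_2\subseteq\GM(k_0)$ together with operators $S\in\sB(\cY_2,\GM(k_0))$ and $T\in\sB(\GM(k_0),\cY_2)$ satisfying $I_{\GM(k_0)}-ST\in\Phi_{k_0}(\GM(k_0))$. I would use the $k_0$-fold left shift $L_{k_0}$ from \Cref{T:GMspace}\ref{T:GMspace1} as the prescribed Fredholm operator of index~$k_0$: it is surjective with $k_0$-dimensional kernel $\operatorname{span}\{e_1,\ldots,e_{k_0}\}$, hence lies in $\Phi_{k_0}(\GM(k_0))$. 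For $\cY_2$, the plan is to take the closed linear span of a sparse subsequence $(e_{n_j})_{j\in\N}$ of the Schauder basis, chosen so that the gaps between consecutive indices $n_j$ grow without bound (guaranteeing infinite codimension) while $\{n_j\}$ shares enough regularity with $\mathscr{M}_{k_0}$ to make ``matched shift'' operators bounded via the spread structure. I would then define $T\colon\GM(k_0)\to\cY_2$ to be a shift-like map aligned with this subsequence, and $S\colon\cY_2\hookrightarrow\GM(k_0)$ to be the inclusion adjusted by a finite-rank corrector, so that $ST$ differs from $I_{\GM(k_0)}-L_{k_0}$ by a finite-rank operator. Then $I_{\GM(k_0)}-ST = L_{k_0}+F$ with $F$ of finite rank, and \Cref{R:ss_iness}\ref{inessperturb} yields $I_{\GM(k_0)}-ST\in\Phi_{k_0}(\GM(k_0))$, as required.

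Plan for part (i): Suppose for contradiction that some infinite-dimensional complemented subspace $\cW$ of $\cX_1\oplus\GM(k_0)$ is isomorphic to a complemented subspace $\cW'$ of $\cY_1\oplus\cY_2$. By \Cref{T:GMspace}\ref{T:GMspace5}, $\GM(k_0)$ and hence $\cY_2$ contain no unconditional basic sequence, so \Cref{GMtotincompuncondbasis} gives the total incomparability of each of the pairs $(\cX_1,\GM(k_0))$, $(\cX_1,\cY_2)$, $(\cY_1,\GM(k_0))$ and $(\cY_1,\cY_2)$. A standard splitting argument for complemented subspaces of direct sums of totally incomparable spaces then yields decompositions $\cW\cong\cW_1^X\oplus\cW_2^X$ with $\cW_1^X\subseteq\cX_1$ and $\cW_2^X\subseteq\GM(k_0)$ complemented in their respective ambient spaces, and analogously $\cW'\cong\cW_1^Y\oplus\cW_2^Y$ with $\cW_1^Y\subseteq\cY_1$ and $\cW_2^Y\subseteq\cY_2$. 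Because $\GM(k_0)$ is indecomposable (\Cref{T:GMspace}\ref{T:GMspace3}), $\cW_2^X$ is either finite-dimensional or of finite codimension in $\GM(k_0)$; in the latter case $\cW_2^X\cong\GM(k_0)$ by \Cref{T:GMspace}\ref{T:GMspace4}. Comparing the decompositions via the isomorphism $\cW\cong\cW'$, two cases arise: (a) if $\cW_2^X\cong\GM(k_0)$, then $\cW'$ contains an isomorphic copy of $\GM(k_0)$ sitting inside $\cY_2$, which contradicts \Cref{T:GMspace}\ref{T:GMspace4} because $\cY_2$ has infinite codimension in $\GM(k_0)$; (b) if $\cW_2^X$ is finite-dimensional, then after absorbing finite-dimensional summands one extracts an isomorphism between infinite-dimensional complemented subspaces of $\cX_1$ and $\cY_1$, which together with the unconditional basis structure yields a non-inessential operator and hence contradicts the essential incomparability of the pair $(\cX_1,\cY_1)$.

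The main obstacle in part (ii) is engineering $S$ and $T$ so that $I_{\GM(k_0)}-ST$ has precisely index $k_0$ while both operators remain bounded, given the highly non-classical GM norm; here the spread structure of $\mathscr{S}_{k_0}$ and the lower $f$-estimate (\Cref{T:GMspace}\ref{T:GMspace1}--\ref{T:GMspace2}) must be used carefully. The delicate point in part (i) is justifying the splitting of complemented subspaces across totally incomparable summands and ensuring the resulting components are themselves complemented in the ambient summands, so that the rigidity of $\GM(k_0)$ (\Cref{T:GMspace}\ref{T:GMspace3}--\ref{T:GMspace4}) and the essential incomparability hypothesis on $(\cX_1,\cY_1)$ can each be invoked in the appropriate case.
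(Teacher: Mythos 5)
Your plan for part~(i) follows the paper's approach in broad outline (Edelstein--Wojtaszczyk splitting across totally incomparable summands, indecomposability and rigidity of $\GM(k_0)$, essential incomparability of $\cX_1$ and $\cY_1$), but it glosses over the step where the contradiction is actually extracted. In your case~(a), from ``$\cW_2^X\cong\GM(k_0)$'' you jump to ``$\cW'$ contains an isomorphic copy of $\GM(k_0)$ sitting inside~$\cY_2$.'' That does not follow directly: the isomorphism $\cW\to\cW'$ maps $\cW_2^X$ into $\cW_1^Y\oplus\cW_2^Y\subseteq\cY_1\oplus\cY_2$, not into $\cY_2$. What the paper does is write the isomorphism as a $2\times2$ operator matrix $U$, observe that the off-diagonal entries and the $(1,1)$-entry are inessential (using total incomparability and essential incomparability, respectively), conclude that $U_{22}\in\sB(\cW_2,\cZ_2)$ is Fredholm and that $\cW_1$ is finite-dimensional, and then restrict $U_{22}$ to a suitably chosen finite-codimensional subspace $\cW_3\cong\GM(k_0)$ that misses $\ker U_{22}$, yielding an isomorphic embedding of $\GM(k_0)$ into $\cY_2$. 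This Fredholm-perturbation step is the core of the argument and cannot be skipped. (Your case~(b) is essentially the paper's observation that $\cW_1$ is finite-dimensional, stated as a case split rather than proved outright; the dichotomy is harmless but adds no content.)

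Part~(ii) is where there is a genuine gap, and I do not think your route can be salvaged. You propose taking $\cY_2$ to be the closed span of a sparse subsequence $(e_{n_j})$ of the basis and defining a ``shift-like map'' aligned with it. But the Gowers--Maurey construction makes the space extremely rigid: the only shift operators that are bounded are those coming from the proper set of spreads $\mathscr{S}_{k_0}=\{S_{A,B}: A,B\in\mathscr{M}_{k_0}\}$, and $\mathscr{M}_{k_0}$ consists solely of the tails $[k_0m+1,\infty)\cap\N$. A map $e_j\mapsto e_{n_j}$ for a sparse sequence $(n_j)$ is a spread $S_{\N,\{n_j\}}$ with $\{n_j\}\notin\mathscr{M}_{k_0}$ unless $\{n_j\}$ is a tail, in which case $\cY_2$ would have finite codimension --- exactly what you must avoid. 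Generically such a spread is unbounded on $\GM(k_0)$, and there is no reason it can be corrected by a finite-rank operator. The paper's construction is entirely different: one shows, via the lower $f$-estimate, that $I_{\GM(k_0)}-L_{k_0}$ is injective but not bounded below (hence has non-closed range), then applies a Lebow--Schechter-type perturbation lemma to find a nuclear operator $B$ for which $\cY_2=\overline{\ran(I_{\GM(k_0)}-L_{k_0}-B)}$ has infinite codimension. Taking $T=I_{\GM(k_0)}-L_{k_0}-B$ (viewed as a map into $\cY_2$) and $S$ the inclusion $\cY_2\hookrightarrow\GM(k_0)$ gives $I_{\GM(k_0)}-ST=L_{k_0}+B\in\Phi_{k_0}(\GM(k_0))$, and \Cref{T:SCfredholmChar} finishes the argument. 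This ``non-closed range plus Lebow--Schechter'' mechanism is the central new idea (borrowed from Aiena--Gonz\'alez and Ferenczi), and your proposal does not touch it.
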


\begin{rem}\label{R:X1=0}  \Cref{lemma_proj_incNew}\ref{lemma_proj_incNew1} is also true for $\cX_1=\{0\}$ (even though it may be debatable whether this space has an unconditional basis). This observation will be important  in the proofs of Theorems~\ref{thm:improjEAE=SC}\ref{T:improjEAE=SCv1}
and~\ref{T:improjUnited}\ref{T:improjV1}. The conscientious reader can check that the proof which we are about to present remains valid for $\cX_1=\{0\}$.
\end{rem}

\begin{proof}[Proof of Lemma~{\normalfont{\ref{lemma_proj_incNew}\ref{lemma_proj_incNew1}}}] To unify notation, set $\cX_2 = \GM(k_0)$. The proof is by con\-tra\-dic\-tion, so assume that~$\cX_1\oplus\cX_2$ contains an infinite-dimensional, complemented sub\-space~$\cW$ which is isomorphic to a  complemented sub\-space~$\cZ$ of~$\cY_1\oplus\cY_2$. \Cref{GMtotincompuncondbasis} shows that each of the pairs~$(\cX_1,\cX_2)$ and $(\cY_1,\cY_2)$ is totally in\-comparable, so a theorem of Edel\-stein and Woj\-taszczyk (see \cite[Theorem~3.5]{EW}, or \cite[Theorem~2.c.13]{LT1} for an exposition) implies that~$\cW\cong\cW_1\oplus\cW_2$ and~$\cZ\cong\cZ_1\oplus\cZ_2$, where~$\cW_j$  and~$\cZ_j$ are complemented subspaces of~$\cX_j$ and~$\cY_j$, respectively, for $j\in\{1,2\}$.
  Take  an isomorphism
  \[ U = \begin{bmatrix} U_{11} & U_{12}\\ U_{21} & U_{22}\end{bmatrix}\colon \cW_1\oplus\cW_2\to \cZ_1\oplus \cZ_2. \]
 The hypothesis that~$\cX_1$ and~$\cY_1$ are essentially incomparable  implies that the opera\-tor~$U_{11}$ is inessential because essential incomparability clearly passes to complemented subspaces. Moreover,  $U_{12}$ and~$U_{21}$  are strictly singular and therefore in\-essen\-tial by \Cref{GMtotincompuncondbasis} and \Cref{R:incompnotions}\ref{R:incompnotions1}. Consequently
 \[ \begin{bmatrix} 0 & 0\\ 0 & U_{22}\end{bmatrix} =  U - \begin{bmatrix} U_{11} & U_{12}\\ U_{21} & 0\end{bmatrix} \]
   is an inessential perturbation of the isomorphism~$U$ and hence a Fredholm ope\-ra\-tor. This implies that~$U_{22}$ is a Fredholm operator and that~$\cW_1$ is finite-di\-men\-sional, so~$\cW_2$ must be infinite-dimensional. Since it is complemented in~$\cX_2$, \Cref{T:GMspace}\ref{T:GMspace3} shows that~$\cW_2$ has finite codimension in~$\cX_2$.

   Choose a closed subspace~$\cW_3$ of finite codimension in~$\cW_2$ such that \[ \cW_3\cap \ker U_{22}=\{0\}\qquad\text{and}\qquad \dim \cX_2/\cW_3\in k_0\N_0. \]  Then  $\cW_3\cong\cX_2$ by \Cref{T:GMspace}\ref{T:GMspace4}, and the restriction of~$U_{22}$ to~$\cW_3$ is an isomorphic embedding into $\cZ_2\subseteq \cY_2$. However, another application of  \Cref{T:GMspace}\ref{T:GMspace4} shows that  no such embedding exists because~$\cY_2$ has infinite co\-dimen\-sion in~$\cX_2$.
\end{proof}

In order to prove the second part of \Cref{lemma_proj_incNew}, we require two lemmas. The statement of the first of these involves the following standard piece of terminology. An operator $T\in\sB(\cX,\cY)$ (where~$\cX$ and~$\cY$ can be any Banach spaces) is \emph{bounded below} if there exists $\epsilon>0$ such that $\lVert Tx\rVert\ge\epsilon\lVert x\rVert$ for every $x\in\cX$. This is equivalent to saying that~$T$ is injective and has closed range, or in other words that~$T$ is an isomorphic embedding.

\begin{lem}\label{lemma_F}
For every $k_0\in\N$,   the operator $I_{\GM(k_0)}-L_{k_0}\in\sB(\GM(k_0))$ is injective, but not bounded below. Consequently its range is not closed  in~$\GM(k_0)$.
\end{lem}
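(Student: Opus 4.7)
The plan is to exploit two features of $\GM(k_0)$ that are already on the table, namely the fact that $R_{k_0}$ is an isometry with one-sided inverse $L_{k_0}$ from \Cref{T:GMspace}\ref{T:GMspace1}, together with the lower $f$-estimate from \Cref{T:GMspace}\ref{T:GMspace2}.

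For injectivity, I would take $x\in\GM(k_0)$ with $(I-L_{k_0})x=0$, expand it as $x=\sum_{n\ge 1}a_n e_n$ using the Schauder basis, and compute $L_{k_0}x=\sum_{n\ge 1}a_{n+k_0}e_n$. Comparing the two expansions via the biorthogonal coefficient functionals yields $a_n=a_{n+k_0}$ for every $n\in\N$, so the coefficient sequence is periodic of period $k_0$, determined by the initial block $y_0:=\sum_{j=1}^{k_0}a_j e_j$. The clean way to convert this periodicity into $x=0$ without any unconditionality is to observe that the tail blocks
\[
y_M := \sum_{j=1}^{k_0} a_{Mk_0+j}\, e_{Mk_0+j} = R_{k_0}^M y_0
\]
arise as the differences $P_{(M+1)k_0}x-P_{Mk_0}x$ of partial sums of a convergent Schauder expansion, hence $\|y_M\|\to 0$. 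But since $R_{k_0}$ is an isometry, $\|y_M\|=\|y_0\|$ for every $M$, forcing $y_0=0$ and therefore $x=0$.

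To show that $I-L_{k_0}$ is not bounded below, I would exhibit the witnesses $v_N:=\sum_{n=1}^{N}e_{nk_0}$. Using $L_{k_0}e_{k_0}=0$ and $L_{k_0}e_{nk_0}=e_{(n-1)k_0}$ for $n\ge 2$, the sum telescopes to $(I-L_{k_0})v_N=e_{Nk_0}$, a unit vector. On the other hand, the vectors $e_{k_0},e_{2k_0},\ldots,e_{Nk_0}$ are consecutive in the sense of \Cref{T:GMspace}\ref{T:GMspace2}, so the lower $f$-estimate with $f(t)=\log_2(t+1)$ gives $\log_2(N+1)\|v_N\|\ge N$, whence $\|v_N\|\to\infty$. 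Hence $\|(I-L_{k_0})v_N\|/\|v_N\|\to 0$, so $I-L_{k_0}$ is not bounded below.

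The final clause is automatic: any injective bounded operator with closed range is bounded below by the Open Mapping Theorem, so the established failure of boundedness-below forces the range to be non-closed. The step I expect to require the most care is the injectivity argument, precisely because $\GM(k_0)$ contains no unconditional basic sequences (\Cref{T:GMspace}\ref{T:GMspace5}), which rules out reading off $a_n=0$ from the mere boundedness of $(a_n)$; the isometric shift $R_{k_0}$ is what rescues the argument by converting periodicity of coefficients into an exact norm equality that is incompatible with the tails of a convergent series.
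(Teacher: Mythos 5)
Your proof is correct and follows the same overall strategy as the paper: compare coefficients to get $a_j=a_{j+k_0}$, deduce injectivity, then use the lower $f$-estimate to defeat boundedness below, and finish with the standard closed-range fact. The details differ modestly: the paper concludes $a_j=0$ directly from the fact that the coefficients of a normalized Schauder basis tend to zero (keeping $j$ fixed and letting the period shift $m\to\infty$), rather than via the isometry of $R_{k_0}^M$ applied to the initial block as you do, and it takes the witnesses $w_n=\sum_{j=1}^n e_j$, for which $\lVert (I-L_{k_0})w_n\rVert\le k_0$, whereas your telescoping choice $v_N=\sum_{n=1}^N e_{nk_0}$ gives the marginally cleaner identity $(I-L_{k_0})v_N=e_{Nk_0}$.
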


\begin{proof}
  The proof is a simple variant of an argument given by Ferenczi in
  the text preceding \cite[Proposition~16]{F}. First, to show that
  $I_{\GM(k_0)}-L_{k_0}$ is injective, suppose that
  $x=\sum_{j=1}^\infty a_je_j\in\ker(I_{\GM(k_0)}-L_{k_0})$.  Then we
  have
\[
0 = \sum_{j=1}^\infty a_je_j - \sum_{j=1}^\infty a_{j+k_0}e_j,
\]
so $a_j = a_{j+k_0}$ for each $j\in\N$. By induction, we deduce that $a_j = a_{j+mk_0}$ for each $m\in\N$. Keeping $j$ fixed and letting $m\to\infty$, we have $a_{j+mk_0}\to0$, so $a_j=0$. Since this is true for every $j\in\N$, we conclude that $x=0$.

Second,   to verify that  $I_{\GM(k_0)}-L_{k_0}$ is not bounded below, we consider the vector $w_n=\sum_{j=1}^n e_j\in \GM(k_0)$ for $n>k_0$. \Cref{T:GMspace}\ref{T:GMspace2} implies that
  \[
  \log_2(n+1)\,\lVert w_n\rVert\ge \sum_{j=1}^n\lVert e_j\rVert = n,
  \]
  while
  \[ \lVert (I_{\GM(k_0)}-L_{k_0})w_n\rVert = \biggl\|\sum_{j=n-k_0+1}^ne_j\biggr\|\le k_0, \]
so
\[ \frac{\lVert (I_{\GM(k_0)}-L_{k_0})w_n\rVert}{\lVert w_n\rVert}\le \frac{k_0\log_2(n+1)}{n}\to0\quad\text{as}\quad n\to\infty. \] Consequently $I_{\GM(k_0)}-L_{k_0}$ is not bounded below.
\end{proof}

The other lemma that we require  originates in the work of Lebow and Schechter \cite[Theorem~5.4]{LS}.

\begin{lem}\label{lemma_LS}
Let~$\cX$ and~$\cY$ be Banach spaces, and suppose that $A\in\sB(\cX,\cY)$ is an operator whose range is not closed. Then, for every $\epsilon>0$, there exists a nuclear operator \mbox{$B\in \sB(\cX,\cY)$} such that $\lVert B\rVert<\epsilon$ and the closure of the range of the operator $A-B$ has infinite codimension in~$\cY$.
\end{lem}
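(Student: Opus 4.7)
The plan is to construct $B$ as a convergent sum of rank-one operators $B = \sum_n (A^*g_n)\otimes y_n$, where $(g_n)\subseteq\cY^*$ and $(y_n)\subseteq\cY$ form a biorthogonal system, i.e.\ $g_i(y_j)=\delta_{ij}$. If the construction can guarantee $\sum_n \|A^*g_n\|\cdot\|y_n\| < \epsilon$, then $B$ is nuclear with $\|B\|<\epsilon$, and biorthogonality gives $g_m(Bx) = \sum_n g_n(Ax)\, g_m(y_n) = g_m(Ax)$ for every $m\in\N$ and $x\in\cX$. Consequently $\ran(A-B)\subseteq \bigcap_m \ker g_m$, which is closed and of codimension equal to the dimension of $\operatorname{span}\{g_m\}$ in $\cY^*$; linear independence of the $g_m$ (forced by biorthogonality) then makes this codimension infinite, as required.

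The fundamental input is a consequence of the closed range theorem: since $A$ has non-closed range, so does $A^*$, and therefore $A^*$ is not bounded below modulo $\ker A^*$. This property is inherited by every closed finite-codimensional subspace $V\subseteq\cY^*$, because if $A^*|_V$ had closed range then $\ran A^* = \ran(A^*|_V) + A^*(W)$ would be the sum of a closed and a finite-dimensional subspace (where $W$ is any finite-dimensional complement to $V$), hence closed. Therefore, for any finite list $y_1,\ldots,y_{n-1}\in\cY$ and any $\delta>0$, there exists a unit vector $g\in\cY^*$ with $g(y_i)=0$ for $i<n$ and $\|A^*g\|<\delta$.

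The sequences $(g_n)$ and $(y_n)$ would then be built by induction. At step $n$, having constructed biorthogonal pairs up to $n-1$, I would choose $g_n$ in $S_n:=\{g\in\cY^*: g(y_i)=0,\ i<n\}$ with $\|g_n\|=1$ and $\|A^*g_n\|$ very small, and then choose $y_n$ in $Z_n:=\{y\in\cY: g_i(y)=0,\ i<n\}$ with $g_n(y_n)=1$; the latter exists because $g_n$ cannot vanish on $Z_n$, as that would place $g_n$ in $\operatorname{span}\{g_1,\ldots,g_{n-1}\}$ and then the constraint $g_n(y_i)=0$ would force $g_n=0$. The main obstacle is controlling $\|y_n\|$, whose size is essentially the reciprocal of $\operatorname{dist}(g_n, \operatorname{span}\{g_1,\ldots,g_{n-1}\})$; one must check that this distance is bounded below by a constant $\alpha_n$ depending only on the \emph{already chosen} data. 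This holds because the finite-dimensional subspace $\operatorname{span}\{g_1,\ldots,g_{n-1}\}$ meets $S_n$ only at $0$, and a finite-dimensional subspace makes a positive minimal angle with any closed subspace disjoint from it. Hence $\alpha_n>0$ is determined \emph{before} $g_n$ is selected, and choosing $\|A^*g_n\|<\epsilon\alpha_n/2^n$ in the first substep secures $\sum_n \|A^*g_n\|\cdot\|y_n\|<\epsilon$, completing the proof.
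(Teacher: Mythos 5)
Your proposal is correct and follows essentially the same route as the paper's proof, which defers to Lebow and Schechter~\cite{LS}: you construct $B=\sum_n(A^*g_n)\otimes y_n$ from a biorthogonal system $(g_n,y_n)$, exactly as in~\cite[Theorem~5.4]{LS}, and impose the smallness condition $\lVert A^*g_n\rVert<\epsilon\alpha_n/2^n$, which is precisely the modification the paper prescribes. Two small points worth flagging (neither affects correctness): your phrase ``codimension equal to the dimension of $\operatorname{span}\{g_m\}$'' is slightly off when that span is infinite-dimensional, but the intended (and all you need) conclusion that $\codim\bigcap_m\ker g_m\ge n$ for each $n$ follows from linear independence of $g_1,\dots,g_n$; and the factor in $\lVert y_n\rVert\le 2/\alpha_n$ (one cannot always attain the infimum) means your bound gives $\sum\lVert A^*g_n\rVert\lVert y_n\rVert<2\epsilon$, which is harmless after adjusting the exponent by one.
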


\begin{proof}
One can be prove this lemma by following the steps of the proof of \cite[Theorem~5.4]{LS}, starting in line 4 with the hypothesis that the range of the operator~$A$ is not closed. The only modification required is that to ensure that the nuclear operator $B=\sum_{k=1}^\infty(A'y_k')\otimes y_k$ has norm less than~$\epsilon$, we must replace the third inequality in \cite[Equation (5.4)]{LS} with the estimate $\lVert A'y_k'\rVert<\epsilon/2^ka_k$.
\end{proof}

\begin{proof}[Proof of Lemma~{\normalfont{\ref{lemma_proj_incNew}\ref{lemma_proj_incNew2}}}]
Combining Lemmas~\ref{lemma_F} and~\ref{lemma_LS}, we can find a nuclear operator $B\in\sB(\GM(k_0))$ such that the closed subspace
\begin{equation*}
\cY_2 = \overline{\ran(I_{\GM(k_0)} - L_{k_0} - B)}
\end{equation*}
has infinite codimension in~$\GM(k_0)$.

To show that $\opSC_{k_0}(\GM(k_0),\cY_2)\ne\emptyset$,
let $T\in\sB(\GM(k_0),\cY_2)$ denote the operator $I_{\GM(k_0)} - L_{k_0}  - B$ regarded as a map into~$\cY_2$, and let $S\in\sB(\cY_2,\GM(k_0))$ be the natural inclusion map. Then we have $I_{\GM(k_0)} - ST = L_{k_0} + B\in\Phi_{k_0}(\GM(k_0))$ because $L_{k_0}\in\Phi_{k_0}(\GM(k_0))$ and~$B$ is compact. This shows that condition~\ref{T:SCfredholmChar1}  in \Cref{T:SCfredholmChar} is satisfied, and the conclusion follows from  condition~\ref{T:SCfredholmChar3}.

Finally, we observe that~$\cY_2$ must be infinite-dimensional because otherwise~$T$ would be a finite-rank operator, in which case $i(I_{\GM(k_0)} - ST) = 0\ne k_0$.
\end{proof}

\begin{rem} \Cref{lemma_LS}  does not follow from the statement of \cite[Theorem~5.4]{LS} it\-self. In fact, Lebow and Schech\-ter could  have concluded their proof of \cite[Theorem~5.4]{LS}  after its first four lines by invoking  the well-known fact that if $\beta(A)<\infty$ for an operator~$A$ between Banach spaces, then~$A$ has closed range.

  However, as we have seen, the remainder of their proof is very useful for our purposes because it establishes the stronger conclusion stated in \Cref{lemma_LS} that we required to prove \Cref{lemma_proj_incNew}\ref{lemma_proj_incNew2}.  More precisely, what we needed was that we can  perturb the operator~$A$  by an inessential operator~$B$ to obtain that $\overline{\ran(A-B)}$ has infinite codimension in~$\cY$. We did not need that the perturbation~$B$ can be chosen to be nuclear and have arbitrarily small norm; we chose to state those facts simply because they follow automatically from the proof.

We remark that both  Aiena--Gonz\'{a}lez and Ferenczi cite  \cite[Theorem~5.4]{LS} in their work, but as far as we can see, that result does not suffice to give their con\-clusions. Like us, they appear to rely on the stronger statement given in \Cref{lemma_LS}.
\end{rem}

\begin{proof}[Proof of Theorem~{\normalfont{\ref{thm:improjEAE=SC}}}] \ref{T:improjEAE=SCv1}.
 Set  $\cX = \GM(k_0)$ and $\cY=\cY_1\oplus\cY_2$, where~$\cY_1$ is a
 Banach space  which is isomorphic to its hyper\-planes and has an unconditional basis (so for instance we can take $\cY_1=\ell_2$ or $\cY_1=c_0$), and~$\cY_2$ is  the
  closed, in\-finite-di\-men\-sional and in\-finite-co\-di\-men\-sional sub\-space of~$\cX$ constructed in \Cref{lemma_proj_incNew}\ref{lemma_proj_incNew2}. Then~$\cX$ and~$\cY$ are projectively incomparable, as observed in \Cref{R:X1=0}. Moreover,
  \Cref{T:GMspace}\ref{T:GMspace4} shows that $\gamma(\cX) = k_0$, while $\gamma(\cY)=1$ because~$\cY_1$ being isomorphic to its hyper\-planes implies that the same is true for~$\cY$.     Therefore $\eae(\cX,\cY) = \lcm(k_0,1)=k_0$ by~\eqref{eq:eaeDefnAlt}.

  In view of \Cref{P:SC1806}, this means that $\operatorname{sc}(\cX,\cY)$ is a multiple of~$k_0$. Hence, to show that $\operatorname{sc}(\cX,\cY) = k_0$, it will suffice to show that $\opSC_{k_0}(\cX,\cY)\ne\emptyset$, which follows by combining \Cref{L:essincomp}\ref{L:essincomp1} with the fact that  $\opSC_{k_0}(\cX,\cY_2)\ne\emptyset$.

\ref{T:improjEAE=SCv2}. This proof is a slightly more elaborate variant of the proof of~\ref{T:improjEAE=SCv1} that we have just given. We begin by choosing two distinct spaces~$\cX_1$ and~$\cY_1$ from the family $\{\ell_p : 1\le p<\infty\}\cup\{c_0\}$, so that~$\cX_1$ and~$\cY_1$ are isomorphic to their hyperplanes,  have unconditional bases and
are totally incomparable (as observed in \cite[page~75]{LT1}, for instance).
Set $\cX_2 = \GM(k_0)$, and let~$\cY_2$ be the subspace of~$\cX_2$ constructed in \Cref{lemma_proj_incNew}\ref{lemma_proj_incNew2},  as in the first part of the proof.
Then $\cX=\cX_1\oplus\cX_2$ and $\cY=\cY_1\oplus\cY_2$ are projectively in\-comparable by \Cref{lemma_proj_incNew}\ref{lemma_proj_incNew1}, and $\gamma(\cX)=\gamma(\cY)=1$ because~$\cX_1$ and~$\cY_1$ are isomorphic to their hyperplanes, so  $\eae(\cX,\cY) = 1$ by~\eqref{eq:eaeDefnAlt}.

It remains to verify  that $\operatorname{sc}(\cX,\cY) = k_0$.
As remarked above, and by \Cref{GMtotincompuncondbasis},
each of the pairs $(\cX_1,\cY_1)$, $(\cX_1,\cY_2)$ and $(\cX_2,\cY_1)$ is totally incomparable and therefore essentially incomparable, so
\Cref{L:essincomp}\ref{L:essincomp2} shows that
$\mathbb{I}_{\normalfont{\text{SC}}}(\cX,\cY)=\mathbb{I}_{\normalfont{\text{SC}}}(\cX_2,\cY_2)$. On the one hand, we have $k_0\in\mathbb{I}_{\normalfont{\text{SC}}}(\cX_2,\cY_2)$ because $\opSC_{k_0}(\cX_2,\cY_2)\ne\emptyset$, so $k_0\Z\subseteq\mathbb{I}_{\normalfont{\text{SC}}}(\cX_2,\cY_2)$ by \Cref{L:SC}\ref{L:SC2}.
 On the other, \mbox{$\mathbb{I}_{\normalfont{\text{SC}}}(\cX_2,\cY_2)\subseteq\mathbb{I}_\Phi(\cX_2) = k_0\Z$},
where the inclusion is obvious and the equality follows from \Cref{T:GMspace}\ref{T:GMspace4}. Hence  $\mathbb{I}_{\normalfont{\text{SC}}}(\cX,\cY) = \mathbb{I}_{\normalfont{\text{SC}}}(\cX_2,\cY_2) = k_0\Z$, and the conclusion follows.
\end{proof}

The proofs of the two parts of \Cref{T:improjUnited} are somewhat more complicated variants of the proofs of the corresponding parts of \Cref{thm:improjEAE=SC} given above. They involve one additional ingredient, namely Gowers' solution to Banach's hyperplane  problem, which was the first infinite-dimensional Banach space shown not to be isomorphic to its hyperplanes (see \cite{go}, as well as \cite[\S{}(5.1)]{gm2} for further results). The following result summarizes the properties of this space that we require.

\begin{thm}[Gowers] \label{T:GowersHyperplane} There exists an infinite-dimensional, reflexive Banach space~$\Go$ with an unconditional basis such that~$\Go$ fails to be isomorphic to any proper subspace of itself.
\end{thm}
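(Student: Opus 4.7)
The plan is to construct $\Go$ as a modification of the Gowers--Maurey spaces from \Cref{T:GMspace}, re\nobreakdash-engineered so that the basis becomes $1$-unconditional while retaining the characteristic operator structure: every $T\in\sB(\Go)$ should admit a decomposition $T = \lambda_T I_\Go + S_T$ with $\lambda_T\in\mathbb{K}$ and $S_T\in\sS(\Go)$. Once this dichotomy is in hand, the failure of $\Go$ to be isomorphic to any proper closed subspace of itself will follow by a short Fredholm argument.

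For the construction, I would define the norm on~$c_{00}$ implicitly as the largest norm satisfying a lower $f$-estimate for $f(t)=\log_2(t+1)$ together with the action of an admissible family of ``special'' functionals, adapted from \Cref{T:GMspace}\ref{T:GMspace2}. Crucially, the family is built so that only the moduli of coordinates enter, ensuring that the unit vector basis $(e_n)_{n\in\N}$ is $1$-unconditional in the completion $\Go$. A Schlumprecht-style duality argument combined with the $f$-estimate then delivers reflexivity.

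The heart of the matter is the dichotomy for operators, which I would establish in two steps. First, using Gowers' block Ramsey theorem together with the combinatorics of rapidly increasing sequences of special functionals adapted to the unconditional setting, I would prove a local statement: for every infinite-dimensional block subspace $Y\subseteq\Go$ and every $T\in\sB(Y,\Go)$ there exist a further block subspace $Z\subseteq Y$ and a scalar $\lambda$ such that $T|_Z - \lambda J_Z$ is strictly singular, where $J_Z\colon Z\hookrightarrow\Go$ is the inclusion. Second, the $1$-unconditionality of the basis lets me globalise: were two distinct scalars $\lambda_1\ne\lambda_2$ to arise on disjointly-supported block subspaces, the unconditional structure would allow me to combine them on a diagonal block subspace to violate the local statement, so a single scalar $\lambda=\lambda_T$ must work throughout.

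Finally, suppose for contradiction that $U\colon\Go\to Y$ is an isomorphism onto a proper closed subspace $Y\subsetneq\Go$, and set $T = J_Y U\in\sB(\Go)$ where $J_Y\colon Y\hookrightarrow\Go$ is the inclusion. Write $T = \lambda I_\Go + S$ with $S\in\sS(\Go)$. If $\lambda=0$, then $T = S$ is strictly singular, contradicting that~$T$ is bounded below on~$\Go$. If $\lambda\ne 0$, then $\lambda I_\Go$ is invertible and~$S$ is inessential by \Cref{R:ss_iness}\ref{R:ss_iness1}, so $T\in\Phi_0(\Go)$ by \Cref{R:ss_iness}\ref{inessperturb}; since~$T$ is injective, this forces $\ran T=\Go$, contradicting $\ran T=Y\subsetneq\Go$. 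The main obstacle is the first step of the dichotomy: transplanting the Gowers--Maurey special-functional and rapidly-increasing-sequence machinery, and the accompanying Ramsey-theoretic step, into an unconditional setting while preserving the scalar-plus-strictly-singular decomposition, is the technical core of Gowers' original construction and is substantially more intricate than the corresponding arguments used to produce the spaces $\GM(k_0)$ of \Cref{T:GMspace}.
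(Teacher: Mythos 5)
Your proposal takes a fundamentally different route from the paper, and it contains a genuine error. The paper does \emph{not} reconstruct Gowers' space from scratch: it simply cites \cite{go} for the existence of a space $\Go$ with an unconditional basis not isomorphic to any proper subspace, and then supplies a short argument (which Gowers omitted) for reflexivity. That argument is elementary: since $\Go$ is not isomorphic to its hyperplanes, no complemented subspace of $\Go$ can be isomorphic to its own hyperplanes, so in particular no complemented copy of $c_0$ or $\ell_1$ exists; by Sobczyk's Theorem (for $c_0$) and a theorem of Finol and W\'ojtowicz (for $\ell_1$, both applicable because $\Go$ is separable with an unconditional basis), $\Go$ contains no copy of $c_0$ or $\ell_1$ at all; and then James' classical criterion gives reflexivity. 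Your proposal to deduce reflexivity from a ``Schlumprecht-style duality argument combined with the $f$-estimate'' is a much heavier and vaguer route to the same conclusion.

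The more serious problem is the operator dichotomy on which your whole strategy rests. You assert that every $T\in\sB(\Go)$ should decompose as $T = \lambda_T I_{\Go} + S_T$ with $S_T\in\sS(\Go)$, and that $1$-unconditionality lets you globalise the local scalar. This is impossible in \emph{any} infinite-dimensional Banach space with an unconditional basis $(e_n)$: take a subset $A\subseteq\N$ with both $A$ and $\N\setminus A$ infinite, and let $P_A$ be the associated diagonal projection, which is bounded by unconditionality. Then $P_A$ is not strictly singular (it acts as the identity on the infinite-dimensional span of $\{e_n : n\in A\}$), and it cannot be $\lambda I_{\Go} + S$ with $S$ strictly singular, since restricting to $\{e_n:n\in A\}$ forces $\lambda = 1$ while restricting to $\{e_n:n\notin A\}$ forces $\lambda = 0$. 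So your ``globalisation'' step does not merely lack a proof; it is false, and the diagonal projections are precisely the counterexamples. Your local statement may well be true (Gowers proved something of that flavour), but it cannot be upgraded to a scalar-plus-strictly-singular decomposition. What Gowers actually established is a \emph{diagonal}-plus-strictly-singular phenomenon for operators on $\Go$, and the ensuing Fredholm argument is correspondingly more delicate: one must show that if $T$ is bounded below and $T = D + S$ with $D$ diagonal and $S$ strictly singular, then the diagonal entries of $D$ are bounded away from zero off a finite set, so that after a finite-rank perturbation $D$ is invertible, and only then does $T\in\Phi_0(\Go)$ follow. Your final paragraph, which applies the Fredholm argument directly to $\lambda I_{\Go} + S$, skips exactly the step where the real work lives.
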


\begin{proof} The only part of this statement  that Gowers did not prove explicitly in~\cite{go} is that~$\Go$ is reflexive. We believe that this fact is known to specialists, but as we have been unable to locate a proof of it in the literature, we outline one here. Since~$\Go$ is not isomorphic to its hyperplanes, it cannot contain any complemented subspace which is isomorphic to its hyperplanes, so in particular no complemented subspace of~$\Go$ is isomorphic to~$c_0$ or~$\ell_1$. Hence, no  subspace of~$\Go$ is isomorphic to~$c_0$ by Sobczyk's Theorem  (see, \emph{e.g.,} \cite[Theorem~2.f.5]{LT1}) or to~$\ell_1$ by a much more recent theorem of Finol and W\'{o}jtowicz~\cite{FW}. (This result was previously stated  without proof in~\cite{Li}.)  Therefore, a classical result of James (see \cite{Ja}, or \cite[Theorem~1.c.12(a)]{LT1} for an exposition) shows  that~$\Go$ is reflexive.
  \end{proof}

\begin{proof}[Proof of Theorem~{\normalfont{\ref{T:improjUnited}}}] \ref{T:improjV1}.
  Following the same approach as in the proof of \Cref{thm:improjEAE=SC}\ref{T:improjEAE=SCv1}, but using different notation, we define $\cX_1 = \GM(k_0)$ and $\cY_1 = c_0\oplus\cY_2$, where~$\cY_2$  is the sub\-space of~$\cX_1$ constructed in  \Cref{lemma_proj_incNew}\ref{lemma_proj_incNew2}. Then, as shown in the proof of \Cref{thm:improjEAE=SC}\ref{T:improjEAE=SCv1}, $\cX_1$ and~$\cY_1$ are projectively incomparable, so~\ref{T:improjV1i} holds, and
\begin{equation}\label{T:improjUnited:eq1}
    \eae(\cX_1,\cY_1) = k_0.
\end{equation}

  Let $\cZ=\Go$ be the Banach space from \Cref{T:GowersHyperplane}. Then $\gamma(\cZ)=0$, so \mbox{$\eae(\cZ,\cZ)=0$}, which verifies~\ref{T:improjV1iv}. Moreover,
  \Cref{GMtotincompuncondbasis} shows that~$\cZ$ is totally incomparable with~$\cX_1$, and therefore also with~$\cY_2$. Since every closed subspace of~$c_0$ contains an isomorphic copy of~$c_0$, while~$\cZ$ is reflexive, $\cZ$ and~$c_0$ are also totally incomparable, and therefore~$\cZ$ and~$\cY_1$ are essentially incomparable. This shows that~\ref{T:improjV1ii} is satisfied.

  It remains to verify~\ref{T:improjV1iii}. By~\ref{T:improjV1ii}, we can apply~\eqref{nonPIeaeform} to calculate $ \eae(\cX,\cY)$ for  $\cX = \cX_1\oplus\cZ$ and $\cY = \cY_1\oplus\cZ$. Using~\eqref{T:improjUnited:eq1}, we obtain
  \begin{equation}\label{T:improjUnited:eq2}  \eae(\cX,\cY) = \gcd(\eae(\cX_1,\cY_1),\eae(\cZ,\cZ)) = \gcd(k_0,0) = k_0.
  \end{equation}

 Finally, we combine \Cref{L:essincomp}\ref{L:essincomp1} with the fact that $\opSC_{k_0}(\cX_1,\cY_2)\ne\emptyset$ to deduce  that $\opSC_{k_0}(\cX,\cY)\ne\emptyset$. In view of~\eqref{T:improjUnited:eq2} and  \Cref{P:SC1806}, this implies that $\operatorname{sc}(\cX,\cY) = k_0$, as we already saw in the proof of \Cref{thm:improjEAE=SC}\ref{T:improjEAE=SCv1}.


 \ref{T:improjV2}.  As above, let $\cZ=\Go$ be the Banach space from \Cref{T:GowersHyperplane} and set $\cY_1 = c_0\oplus \cY_2$, where~$\cY_2$ is the subspace of~$\GM(k_0)$ from  \Cref{lemma_proj_incNew}\ref{lemma_proj_incNew2}, but now define $\cX_1 = \ell_1\oplus\GM(k_0)$. Then~$\cX_1$ and~$\cY_1$ are projectively incomparable by  \Cref{lemma_proj_incNew}\ref{lemma_proj_incNew1}. We showed in the first part of the proof that~$\cY_1$ and~$\cZ$ are essentially incomparable; a similar argument gives the same conclusion for~$\cX_1$ and~$\cZ$. Hence conditions \ref{T:improjV1i}--\ref{T:improjV1iv} are satisfied.

  Arguing as before, we see that  the Banach spaces
 \[ \cX = \cX_1\oplus\cZ = \ell_1\oplus\GM(k_0)\oplus\cZ\qquad\text{and}\qquad \cY = \cY_1\oplus\cZ = c_0\oplus \cY_2\oplus\cZ \]
 satisfy $\gamma(\cX)=\gamma(\cY) = 1$, so that $\eae(\cX,\cY)=1$,  and we also have
  $\opSC_{k_0}(\cX,\cY)\ne\emptyset$, which implies that $k_0\Z\subseteq\mathbb{I}_{\text{SC}}(\cX,\cY)$.
To verify the opposite inclusion, we observe that each of the pairs $(\ell_1, c_0)$, $(\ell_1,\cY_2\oplus\cZ)$ and $(c_0,\GM(k_0)\oplus\cZ)$ is essentially incomparable, so \Cref{L:essincomp}\ref{L:essincomp2} shows that
  \[ \mathbb{I}_{\normalfont{\text{SC}}}(\cX,\cY)=
  \mathbb{I}_{\normalfont{\text{SC}}}(\GM(k_0)\oplus\cZ,\cY_2\oplus\cZ)\subseteq \mathbb{I}_\Phi(\GM(k_0)\oplus\cZ) = k_0\Z, \]
where the final equality follows from \Cref{GMtotincompuncondbasis} and  \Cref{L:essincFredholmIndex}\ref{L:essincFredholmIndex2}. Hence we have $\mathbb{I}_{\normalfont{\text{SC}}}(\cX,\cY)=k_0\Z$, and therefore $\operatorname{sc}(\cX,\cY)=k_0$.
  \end{proof}

\subsection*{Acknowledgements}
We are grateful to Tomasz Kania for having brought the reference~\cite{FW} to our attention.

This work is based on research supported in part by the National Research Foundation of South Africa (NRF) and the DSI-NRF Centre of Excellence in Mathematical and Statistical Sciences (CoE-MaSS). Any opinion, finding and conclusion or recommendation expressed in this material is that of the authors and the NRF and CoE-MaSS do not accept any liability in this regard.

\end{document}